\newcommand{\PP}{\mathbb{P}}
\newcommand{\FF}{\mathbb{F}}
\newcommand{\CC}{\mathbb{C}}
\newcommand{\ZZ}{\mathbb{Z}}
\newcommand{\RR}{\mathbb{R}}
\newcommand{\QQ}{\mathbb{Q}}
\newcommand{\NN}{\mathbb{N}}
\renewcommand{\cD}{\mathcal{D}}
\newcommand{\cF}{\mathcal{F}}
\newcommand{\cHX}{\mathcal{H}_\cX}
\newcommand{\cHY}{\mathcal{H}_Y}
\newcommand{\cHZ}{\mathcal{H}_\cZ}
\newcommand{\cHopp}{\mathcal{H}^{\text{\rm opp}}}
\newcommand{\ev}{\mathrm{ev}}
\newcommand{\Nov}{\Lambda}
\newcommand{\NovX}{\Lambda_\cX}
\newcommand{\NovY}{\Lambda_Y}
\newcommand{\NovZ}{\Lambda}
\newcommand{\bt}{\mathbf{t}}
\newcommand{\btau}{\boldsymbol{\tau}}
\newcommand{\bfq}{\mathbf{q}}
\renewcommand{\(}{\left(}
\renewcommand{\)}{\right)}
\DeclareMathOperator{\Res}{Res}
\newcommand{\fun}{\mathbf{1}}
\newcommand{\cC}{\mathcal{C}}
\newcommand{\cX}{\mathcal{X}}
\newcommand{\cY}{\mathcal{Y}}
\newcommand{\cZ}{\mathcal{Z}}
\newcommand{\cLX}{\mathcal{L}_{\mathcal{X}}}
\newcommand{\cLY}{\mathcal{L}_Y}
\newcommand{\cLZ}{\mathcal{L}_\cZ}
\newcommand{\cDX}{\mathcal{D}_{\mathcal{X}}}
\newcommand{\cDY}{\mathcal{D}_Y}
\newcommand{\HorbX}{H^\bullet_{\scriptscriptstyle \text{\rm
      CR}}(\cX;\CC)}
\newcommand{\HorbNovX}{H^\bullet_{\scriptscriptstyle \text{\rm
      CR}}(\cX;\NovX)}
\newcommand{\HorbXNovZ}{H^\bullet_{\scriptscriptstyle \text{\rm
      CR}}(\cX;\NovZ)}
\newcommand{\HorbZ}{H^\bullet_{\scriptscriptstyle \text{\rm
      CR}}(\cZ;\CC)}
\newcommand{\HorbiZ}[1]{H^{#1}_{\scriptscriptstyle \text{\rm
      CR}}(\cZ;\CC)}
\newcommand{\HorbNovZ}{H^\bullet_{\scriptscriptstyle \text{\rm CR}}(\cZ;\NovZ)}
\newcommand{\HY}{H^\bullet(Y;\CC)}
\newcommand{\HNovY}{H^\bullet(Y;\NovY)}
\newcommand{\HYNovX}{H^\bullet(Y;\NovX)}
\newcommand{\HYNovZ}{H^\bullet(Y;\NovZ)}
\newcommand{\HZ}{H^\bullet(\cZ;\CC)}
\newcommand{\correlator}[1]{\left \langle #1 \right \rangle}
\newcommand{\ccorrelator}[1]{\big \langle \!\! \big \langle #1 \big
  \rangle \!\! \big \rangle}
\newcommand{\CCorrelator}[1]{\bigg \langle \!\!\! \bigg \langle #1 \bigg
  \rangle \!\!\! \bigg \rangle}
\newlength{\mybracketspacing}
\DeclareMathOperator{\id}{id}
\newcommand{\T}{\mathbb{T}}
\newcommand{\U}{\mathbb{U}}
\newcommand{\QC}[1]{\underset{#1}{\star}}
\newcommand{\newQC}[1]{\underset{#1}{\circledast}}
\newcommand{\smallQC}{\bullet}
\newcommand{\CR}{\underset{\scriptscriptstyle \text{CR}}{\cup}}
\newcommand{\opp}{{\text{opp}}}
\newcommand{\re}{\mathrm{e}}
\DeclareMathOperator{\Real}{Re}
\theoremstyle{plain}
\newtheorem{theorem}{Theorem}[section]
\newtheorem{lemma}[theorem]{Lemma}
\newtheorem{proposition}[theorem]{Proposition}
\newtheorem{conj}[theorem]{Conjecture}
\newtheorem*{conj*}{Conjecture}
\newtheorem{cor}[theorem]{Corollary}
\newtheorem*{cor*}{Corollary}
\newtheorem*{applemma1}{Lemma A.1}
\newtheorem*{applemma2}{Lemma A.2}
\theoremstyle{definition}
\newtheorem{convassum}[theorem]{Convergence Assumption}
\newtheorem{rem}[theorem]{Remark}
\newtheorem*{definition}{Definition}
\DeclareMathOperator{\NE}{Eff}
\DeclareMathOperator{\Hom}{Hom}
\renewcommand{\cL}{\mathcal{L}}
\newcommand{\Lie}{\operatorname{Lie}}
\def\parfrac#1#2{\frac{\partial #1}{\partial #2}}
\begin{document}

\title[Quantum Cohomology and Crepant Resolutions]{Quantum Cohomology and Crepant Resolutions: A Conjecture}

\author{Tom Coates}
\address{Department of Mathematics \\ Imperial College London \\
  London SW7 2AZ \\ United Kingdom}
\email{tomc@imperial.ac.uk}

\author{Yongbin Ruan}
\address{ Department of Mathematics\\ University of Michigan \\ Ann Arbor MI
  48105 \\ USA }
\email{ruan@umich.edu}

\keywords{Orbifolds, crepant resolutions, quantum cohomology,
  Gromov--Witten invariants}

\begin{abstract}
  We give an expository account of a conjecture, developed by
  Coates--Corti--Iritani--Tseng and Ruan, which relates the quantum
  cohomology of a Gorenstein orbifold $\cX$ to the quantum cohomology
  of a crepant resolution $Y$ of $\cX$.  We explore some consequences
  of this conjecture, showing that it implies versions of both the
  Cohomological Crepant Resolution Conjecture and of the Crepant
  Resolution Conjectures of Ruan and Bryan--Graber.  We also give a
  `quantized' version of the conjecture, which determines higher-genus
  Gromov--Witten invariants of $\cX$ from those of $Y$.
\end{abstract}

\maketitle

\section{Introduction}

An orbifold is a space which is locally modelled on quotients of
$\RR^n$ by finite groups.  Orbifolds are a natural class of spaces to
study.  Manifolds and smooth algebraic varieties are orbifolds but
spaces of geometric interest, and particularly those obtained by
quotient constructions, are often orbifolds rather than varieties or
manifolds.  Furthermore many geometric operations, including those
transformations involved in spacetime topology change
\cite{Aspinwall--Greene--Morrison}, treat orbifolds and smooth
varieties on an equal footing.  In this paper we study the quantum
cohomology of orbifolds.

The quantum cohomology of a K\"ahler orbifold $\cX$ is a family of
algebras whose structure constants encode certain \emph{Gromov--Witten
  invariants} of $\cX$.  These Gromov--Witten invariants are
interesting from at least three points of view: \emph{symplectic
  topology}, as they give invariants of $\cX$ as a symplectic
orbifold; \emph{algebraic geometry}, as they give a `virtual count' of
the number of curves in $\cX$ which are constrained to pass through
various cycles; and \emph{physics}, as they give rigorous meaning to
instanton counting in a model of string theory with spacetime $\cX
\times \RR^4$.  In what follows we outline a conjecture which
describes how the quantum cohomology of a Gorenstein orbifold $\cX$ is
related to that of a crepant resolution $Y$ of $\cX$, and explore some
of its consequences.  The conjecture is of interest also from at least
three points of view: Gromov--Witten invariants of orbifolds are
\emph{difficult to compute}, and the conjecture provides tools for
doing this; crepant resolutions are simple examples of
\emph{birational transformations}, and an understanding of how quantum
cohomology changes under birational transformations would be both
interesting and useful; and the conjecture provides a version of the
\emph{McKay Correspondence} which reflects a well-known physical
principle --- that string theory on an orbifold and on a crepant
resolution of that orbifold should be equivalent.

The conjecture, which is described in more detail in \S\ref{sec:conj}
below, was developed by Coates--Corti--Iritani--Tseng
\cite{CCIT:crepant1} and Ruan \cite{Ruan:conjecture}.  Following
Givental, we encode all genus-zero Gromov--Witten invariants of $\cX$
in the germ $\cLX$ of a Lagrangian submanifold in a symplectic vector
space $\cHX$.  This submanifold-germ $\cLX$ has very special geometric
properties (theorem~\ref{thm:cone} below) which make it easy to
determine the quantum cohomology of $\cX$ from $\cLX$
(\S\ref{sec:conetoQC} below).  A similar submanifold-germ $\cLY
\subset \cHY$ encodes all genus-zero Gromov--Witten invariants of the
crepant resolution $Y$.  As $\cLX$ and $\cLY$ are germs of
submanifolds, it makes sense to analytically continue them.  We
conjecture that there is a linear symplectic isomorphism $\U:\cHX \to
\cHY$ such that after analytic continuation of $\cLX$ and $\cLY$ we
have $\U(\cLX) = \cLY$.  This gives, in particular, a conjectural
relationship between the quantum cohomology of $\cX$ and the quantum
cohomology of $Y$.

The idea that the quantum cohomology of $\cX$ should be in some sense
equivalent to the quantum cohomology of $Y$ has been around for a
while now, and is due to Ruan.  He originally conjectured that the
small quantum cohomology of $\cX$ and the small quantum cohomology of
$Y$ --- two families of algebras which depend on so-called quantum
parameters --- become isomorphic after specializing some of the
quantum parameters to particular values.  This specialization may
first require analytic continuation in the quantum parameters.  Ruan's
conjecture is discussed further and revised in \S\ref{sec:Ruan} and
\S\ref{sec:gerbes} below.
Bryan and Graber \cite{Bryan--Graber} recently proposed a refinement
of Ruan's conjecture, applicable whenever $\cX$ satisfies a Hard
Lefschetz condition on orbifold cohomology \cite{CCIT:crepant1}.  They suggest that in this
case the big quantum cohomology algebras of $\cX$ and $Y$ coincide
after analytic continuation and specialization of quantum parameters,
via a linear isomorphism that also matches certain pairings on the
algebras.

As we explain in \S\S\ref{sec:Ruan}--\ref{sec:BG} below, under
appropriate conditions on $\cX$ our conjecture implies something very
like the earlier conjectures of Ruan and Bryan--Graber.  Our
conjecture applies, however, in much greater generality.  This fits
with a general picture developed by Givental: that the
submanifold-germ $\cLX$ often transforms in a simple way under
geometric operations on $\cX$, even when those operations have a
complicated effect on quantum cohomology.  Our conjecture also fits
well with Givental's approach to mirror symmetry.  This was the
essential point in the proof \cite{CCIT:crepant1} of the conjecture
for $\cX = \PP(1,1,2)$ and $\cX = \PP(1,1,1,3)$.  Forthcoming work by
Coates, Corti, Iritani, and Tseng will extend this line of argument,
using mirror symmetry to prove our conjecture for crepant resolutions
of toric orbifolds $\cX$ such that $c_1(\cX) \geq 0$.

An outline of the paper is as follows.  We give introductions to the
cohomology and quantum cohomology of orbifolds in \S\ref{sec:QC}, and
to Givental's framework in \S\ref{sec:cone}.  We state the conjecture
in \S\ref{sec:conj}.  After giving some preparatory lemmas
(\S\ref{sec:properties}), we explain in \S\ref{sec:conetoQC} how to
extract quantum cohomology from the submanifold $\cLX$.  This allows
us to draw conclusions about quantum cohomology from our conjecture.
We do this in the next three sections, proving something like the
Cohomological Crepant Resolution Conjecture in \S\ref{sec:CCRC},
something like Ruan's conjecture in \S\ref{sec:Ruan}, and something
like the Bryan--Graber conjecture in \S\ref{sec:BG}.  We close by
discussing a higher-genus version of the conjecture
(\S\ref{sec:highergenus}) and the role of flat gerbes
(\S\ref{sec:gerbes}).

We should emphasize that most of what follows is a new presentation of
ideas and methods which are already in the literature; in particular
we draw the reader's attention to \citelist{\cite{Givental:symplectic}
  \cite{Barannikov} \cite{CCIT:crepant1} \cite{Ruan:firstconjecture}}.
But we feel that these ideas are important enough to deserve a clear
and accessible expository account.  The main purpose of this article
is to give such an account: we are, of course, entirely responsible
for any mistakes or obscurities that it contains.

\subsection*{Acknowledgements.}  

Both authors are very grateful to Hiroshi Iritani: most of the results
in this paper we either learned from him or developed in conversations
with him.  We would have preferred that he join us as author of this
note, but must respect his wishes in this regard.  T.C. thanks Jim
Bryan, Alessio Corti, Alexander Givental, and Hsian-Hua Tseng for
useful discussions; and the Royal Society and the Clay Mathematics
Institute for financial support.  Y.R. thanks Paul Aspinwall for
useful discussions.  This work was partially supported by the National
Science Foundation under grants DMS-0401275 and DMS-0072282.

\section{Orbifold Cohomology and Quantum Cohomology}
\label{sec:QC}

In this section we describe and fix notation for orbifold cohomology,
Gromov--Witten invariants, and quantum cohomology.  The non-expert
reader should be able to follow the rest of the paper after reading
the summary of these topics below; detailed accounts of the theory can
be found in the work of Chen--Ruan \cite{Chen--Ruan:orbifold,
  Chen--Ruan:GW} and Abramovich--Graber--Vistoli \cite{AGV:1,AGV:2}.
We work in the algebraic category, so from now on `orbifold' means
`smooth Deligne--Mumford stack over $\CC$' and `manifold' means
`smooth variety'.

Let $\cZ$ be an orbifold.  The \emph{Chen--Ruan orbifold cohomology}
$\HorbZ$ is the cohomology of the so-called inertia stack of $\cZ$.
If $\cZ$ is a manifold then $\HorbZ$ is canonically isomorphic to the
ordinary cohomology $\HZ$ and so a Chen--Ruan cohomology class can be
represented, via Poincar\'e duality, as a cycle in $\cZ$.  In general
a Chen--Ruan class can be represented as a linear combination of pairs
$(A,[g_A])$ where $A \subset \cZ$ is a connected cycle and $[g_A]$ is
a conjugacy class in the isotropy group of the generic point of $A$.
Chen--Ruan cohomology contains ordinary cohomology as a subspace,
represented by those decorated cycles $(A,[g_A])$ where $g_A$ is the
identity element; if $\cZ$ is a manifold then this subspace is the
whole of $\HorbZ$.  The complementary subspace in $\HorbZ$ spanned by
those decorated cycles $(A,[g_A])$ such that $g_A$ is not the identity
is called the \emph{twisted sector}.  Chen--Ruan cohomology carries a
non-degenerate pairing, the \emph{orbifold Poincar\'e pairing}, which
functions exactly as the usual Poincar\'e pairing except that classes
represented by $(A,[g_A])$ and $(B,[g_B])$ pair to zero unless $[g_A]
= [g_B^{-1}]$.

In what follows we will consider maps $f:\cC \to \cZ$ from orbifold
curves to $\cZ$.  The source curve $\cC$ here may be nodal, and
carries a number of marked points.  We allow $\cC$ to have isotropy at
the marked points and nodes, but nowhere else, and insist that the map
$f$ is \emph{representable}: that it induces injections on all
isotropy groups.  (In particular, therefore, if $\cZ$ is a manifold
then we consider only maps $f:\cC \to \cZ$ from curves with trivial
orbifold structure.)\phantom{.} We take the \emph{degree} of the map
$f:\cC \to \cZ$ to be the degree of the corresponding map between
coarse moduli spaces \cite{Keel--Mori}.  This means the following.  Let
$C$ and $Z$ be the coarse moduli spaces of $\cC$ and $\cZ$
respectively, and let $\bar{f}:C \to Z$ be the map induced by $f$.
Consider the free part 
\[
H_2(Z;\ZZ)_{\rm free} = H_2(Z;\ZZ) / H_2(Z;\ZZ)_{\rm tors} 
\]
of $H_2(Z;\ZZ)$; here $H_2(Z;\ZZ)_{\rm tors}$ is the torsion subgroup
of $H_2(Z;\ZZ)$. The degree $d$ of $f:\cC \to \cZ$, $d \in
H_2(Z;\ZZ)_{\rm free}$, is defined to be the equivalence class of
$\bar{f}_\star [C]$ where $[C]$ is the fundamental class of $C$.

We use correlator notation for the \emph{Gromov--Witten invariants} of
the orbifold $\cZ$, writing
\begin{equation}
  \label{eq:GW}
  \correlator{\delta_1 \psi^{a_1},\ldots,\delta_n
    \psi^{a_n}}^\cZ_{g,n,d} = \correlator{\tau_{a_1}(\delta_1),\ldots,\tau_{a_n}(\delta_n)}_{g,d}
\end{equation}
where $\delta_1, \ldots,\delta_n$ are Chen--Ruan cohomology classes on
$\cZ$; $a_1,\ldots,a_n$ are non-negative integers; and the right-hand
side is defined as on page 41 of \cite{AGV:2}.  If $\cZ$ is a
manifold; $a_1 = \cdots = a_n = 0$; and a very restrictive set of
transversality assumptions hold then \eqref{eq:GW} gives the number of
smooth $n$-pointed curves in $\cZ$ of degree $d$ and genus $g$ which
are incident at the $i$th marked point, $1 \leq i \leq n$, to a chosen
generic cycle Poincar\'e-dual to $\delta_i$ (see
\cite{Fulton--Pandharipande}).  In general, one should interpret
\eqref{eq:GW} as the `virtual number' of possibly-nodal $n$-pointed
orbifold curves in $\cZ$ of genus $g$ and degree $d$ which are
incident to chosen cycles as above.  If any of the $a_i$ are non-zero
then we count only curves which in addition satisfy certain
constraints on their complex structure.  If $\cZ$ is an orbifold but
not a manifold then, as discussed above, the curves we count are
themselves allowed to be orbifolds; the orbifold structure at the
$i$th marked point of the curve is determined by the conjugacy class
$[g_i]$ in a representative $(A_i,[g_i])$ of $\delta_i$.  We write
$\NE(\cZ)$ for the set of possible degrees $d$ in \eqref{eq:GW}, or in
other words for the set of degrees of effective orbifold curves in
$\cZ$.

Henceforth let $\cX$ be a Gorenstein orbifold with projective coarse
moduli space $X$, and let $\pi:Y \to X$ be a crepant resolution.
Assume that the isotropy group of the generic point of $\cX$ is
trivial.  The cohomology and homology groups $H^\bullet(\cX;\QQ)$,
$H_\bullet(\cX;\QQ)$ are canonically isomorphic to $H^\bullet(X;\QQ)$
and $H_\bullet(X;\QQ)$ respectively.  The maps
\begin{align*}
  \pi^\star: H^\bullet(\cX;\QQ) \to H^\bullet(Y;\QQ), &&
  \pi_\star: H_\bullet(Y;\QQ) \to H_\bullet(\cX;\QQ),
\end{align*}
are respectively injective \cite{Beilinson--Bernstein--Deligne} and
surjective, and there is a `wrong-way' map
\[
\pi_!: H^\bullet(Y;\QQ) \to H^\bullet(\cX;\QQ)
\]
defined using Poincar\'e duality.  We refer to elements of $\ker
\pi_!$ as \emph{exceptional classes}.  For an orbifold $\cZ$, we say
that a basis for $H_2(Z;\ZZ)_{\rm free}$ is \emph{positive} if the
degree of any map $f:\cC \to \cZ$ from an orbifold curve is a
non-negative linear combination of basis elements.  Let us fix bases
for homology, cohomology, and orbifold cohomology as follows.  Let
$\beta_1,\ldots,\beta_r$ be a positive basis for $H_2(Y;\ZZ)_{\rm
  free}$ such that
\begin{align*}
  & \text{$\pi_\star \beta_1,\ldots,\pi_\star \beta_s$ is a positive
    basis for $H_2(X;\ZZ)_{\rm free}$,} \\
  & \text{$\beta_{s+1},\ldots,\beta_r$ is a basis for $\ker \pi_\star
    \subset H_2(Y;\ZZ)_{\rm free}$.}
\end{align*}
Choose homogeneous bases $\varphi_0,\ldots,\varphi_N$ for
$H^\bullet(Y;\QQ)$ and $\phi_0,\ldots,\phi_N$ for
$H^\bullet_{\scriptscriptstyle \text{\rm CR}}(\cX;\QQ)$ such that
\begin{align*}
  & \text{$\varphi_0 = \fun_Y$, the identity element in
    $H^\bullet(Y;\QQ)$;} \\
  & \text{$\varphi_1,\ldots,\varphi_r$ is the basis for $H^2(Y;\QQ)$
    dual to   $\beta_1,\ldots,\beta_r$;} \\
  & \text{$\phi_0 = \fun_\cX$, the identity element in
    $H^0(\cX;\QQ)$;} \\
  & \text{$\phi_1,\ldots,\phi_s$ is the basis for $H^2(\cX;\QQ)$ dual
    to $\pi_\star \beta_1,\ldots,\pi_\star \beta_s$;} \\
  & \text{$\phi_1,\ldots,\phi_r$ is a basis for $H^2_{\scriptscriptstyle \text{\rm CR}}(\cX;\QQ)$.}
\end{align*}
Note that $\varphi_i = \pi^\star(\phi_i)$, $1 \leq i \leq s$.  Let
$\varphi^0,\ldots,\varphi^N$ be the basis for $H^\bullet(Y;\CC)$ which
is dual to $\varphi_0,\ldots,\varphi_N$ under the Poincar\'e pairing
$\( \cdot, \cdot \)_Y$, and let $\phi^0,\ldots,\phi^N$ be the basis
for $\HorbX$ which is dual to $\phi_0,\ldots,\phi_N$ under the
orbifold Poincar\'e pairing $\( \cdot, \cdot \)_\cX$.  We will use
Einstein's summation convention for Greek indices, summing repeated
Greek (but not Roman) indices over the range $0,1,\ldots,N$.  For $d
\in \NE(Y)$, let
\begin{align*}
Q^d & = Q_1^{d_1} Q_2^{d_2} \cdots Q_r^{d_r}
&& \text{where} & d & = d_1 \beta_1 + \cdots + d_r \beta_r, \\
\intertext{and for $d \in \NE(\cX)$, let}
  U^{d} & = U_1^{d_1} U_2^{d_2} \cdots U_s^{d_s}
  && \text{where} & d &= d_1 \pi_\star \beta_1 + \cdots +
d_s \pi_\star \beta_s.
\end{align*}
The monomial $Q^d$ is an element of the \emph{Novikov ring for $Y$},
$\NovY = \CC[\![Q_1,\ldots,Q_r]\!]$; the monomial
$U^{d}$ is an element of the \emph{Novikov ring for $\cX$}, $\NovX =
\CC[\![U_1,\ldots,U_s]\!]$.

The \emph{big quantum product} for $\cX$ is a family $\QC{\tau}$ of
algebra structures on $\HorbNovX$, parameterized by $\tau \in
\HorbNovX$, which is defined in terms of Gromov--Witten invariants of
$\cX$.  Let $\tau = \tau_\alpha \phi_\alpha$, and consider the
\emph{genus-zero Gromov--Witten potential} for $\cX$,
\begin{align}
  F_\cX &= \sum_{d \in \NE(\cX)}
  \sum_{n \geq 0}
  \correlator{\tau,\tau,\ldots,\tau}^\cX_{0,n,d}
  { U^{d} \over n!} \notag \\
  \label{eq:FX}
  &= \sum_{\substack{d \in \NE(\cX): \\
      d = d_1 \pi_\star \beta_1 + \cdots + d_s \pi_\star \beta_s}}
  \sum_{ n \geq 0 }
  \correlator{\phi_{\epsilon_1},\ldots,\phi_{\epsilon_n}}^\cX_{0,n,d}
  {U_1^{d_1} \cdots U_s^{d_s} \tau_{\epsilon_1} \cdots
    \tau_{\epsilon_n} \over n!}.
\end{align}
(Recall that we always sum over repeated Greek indices, such as the
$\epsilon_i$ here.)  The Gromov--Witten potential $F_\cX$ is a formal
power series in the variables $\tau_0,\ldots,\tau_N$ and
$U_1,\ldots,U_s$; it is a generating function for
genus-zero Gromov--Witten invariants of $\cX$.  The potential $F_\cX$
determines the big quantum product $\QC{\tau}$ on $\HorbNovX$ via
\begin{equation}
  \label{eq:bigQCX}
  \phi_\alpha \QC{\tau} \phi_\beta =
  {\partial^3 F_\cX \over \partial \tau_\alpha \partial \tau_\beta
    \partial \tau_\gamma} \phi^\gamma.
\end{equation}
We can regard the RHS of \eqref{eq:bigQCX} as a formal power series in
$\tau_0,\ldots,\tau_N$ with coefficients in $\HorbNovX$, and thus
$\QC{\tau}$ gives a family, depending formally on $\tau$, of algebra
structures on $\HorbNovX$.  Similarly, setting $t = t_\alpha
\varphi_\alpha$, the \emph{genus-zero Gromov--Witten potential} for
$Y$,
\begin{align}
  F_Y &= \sum_{d \in \NE(Y)}
  \sum_{n \geq 0}
  \correlator{t,t,\ldots,t}^Y_{0,n,d}
  { Q^d \over n!} \notag \\
  \label{eq:FY}
  &= \sum_{\substack{d \in \NE(Y): \\
      d = d_1 \beta_1 + \cdots + d_r \beta_r}}
  \sum_{ n \geq 0 }
  \correlator{\varphi_{\epsilon_1},\ldots,\varphi_{\epsilon_n}}^Y_{0,n,d}
  {Q_1^{d_1} \cdots Q_r^{d_r} t_{\epsilon_1} \cdots
  t_{\epsilon_n} \over n!}
\end{align}
is a formal power series in the variables $t_0,\ldots,t_N$ and
$Q_1,\ldots,Q_r$.  It determines the \emph{big quantum
  product for $Y$}, which is a family $\QC{t}$ of algebra structures
on $\HNovY$ depending formally on $t \in \HNovY$, via
\begin{equation}
  \label{eq:bigQCY}
  \varphi_\alpha \QC{t} \varphi_\beta =
  {\partial^3 F_Y \over \partial t_\alpha \partial t_\beta
    \partial t_\gamma} \varphi^\gamma.
\end{equation}

The \emph{small quantum products} are algebra structures on
$\HorbNovX$ and $\HNovY$ obtained from the big quantum products
\eqref{eq:bigQCX} and \eqref{eq:bigQCY} by setting $\tau=0$, $t=0$:
\begin{equation}
  \label{eq:smallQC}
  \begin{aligned}
    \phi_\alpha \smallQC \phi_\beta
    &= \sum_{d \in \NE(\cX)}
    \correlator{\phi_\alpha, \phi_\beta, \phi^\gamma}^\cX_{0,3,d}
    U^{d} \phi_\gamma
    & \text{for $\cX$} \\
    \varphi_\alpha \smallQC \varphi_\beta
    &= \sum_{d \in \NE(Y)}
    \correlator{\varphi_\alpha, \varphi_\beta,
      \varphi^\gamma}^Y_{0,3,d}
    Q^d \, \varphi_\gamma
    & \text{for $Y$.}
  \end{aligned}
\end{equation}
The variables $U_1,\ldots,U_s$ and $Q_1,\ldots,Q_r$ hidden here are
the `quantum parameters' described in the introduction.  Setting $Q_1
= \cdots = Q_r = 0$ in \eqref{eq:smallQC} recovers the usual cup
product on $\HY$; setting $U_1 = \cdots = U_s = 0$ gives the
\emph{Chen--Ruan product} on $\HorbX$, which we denote by $\CR$.
Unless otherwise indicated, all products of Chen--Ruan cohomology
classes are taken using $\CR$.

It follows from the Divisor Equation (see \emph{e.g.}
\cite{Bryan--Graber}) that $\phi_\alpha \QC{\tau} \phi_\beta$ depends
on the variables $\tau_1,\ldots,\tau_s,U_1,\ldots,U_s$ only through
the combinations $U_i \re^{t_i}$, $1 \leq i \leq s$, and that
$\varphi_\alpha \QC{t} \varphi_\beta$ depends on the variables
$t_1,\ldots,t_r,Q_1,\ldots,Q_r$ only through the combinations $Q_i
\re^{t_i}$, $1 \leq i \leq r$.  Set
\begin{equation} \label{eq:tworest}
  \begin{aligned}
    \tau_{\rm two} = \tau_1 \phi_1 + \cdots + \tau_s \phi_s, &&
    \tau_{\rm rest} = \tau_0 \phi_0 + \tau_{s+1} \phi_{s+1} + \cdots +
    \tau_N \phi_N, \\
    t_{\rm two} = t_1 \varphi_1 + \cdots + t_r \varphi_r, &&
    t_{\rm rest} = t_0 \varphi_0 + t_{r+1} \varphi_{r+1} + \cdots +
    t_N \varphi_N,
  \end{aligned}
\end{equation}
so that $\tau = \tau_{\rm two} + \tau_{\rm rest}$ and $t = t_{\rm two}
+ t_{\rm rest}$. Then
\begin{multline}
  \label{eq:bigQCXdivisor}
  \phi_\alpha \QC{\tau} \phi_\beta =
  \sum_{\substack{d \in \NE(\cX): \\
      d = d_1 \pi_\star \beta_1 + \cdots + d_s \pi_\star \beta_s}}
  \sum_{ n \geq 0 }
  \correlator{\phi_{\alpha},\phi_{\beta},\tau_{\rm
      rest},\ldots,\tau_{\rm rest},\phi^\gamma}^\cX_{0,n+3,d} \\
  \times {U_1^{d_1} \cdots U_s^{d_s} \re^{d_1 \tau_1} \cdots \re^{d_s \tau_s}
    \over n!} \, \phi_\gamma
\end{multline}
and
\begin{multline}
  \label{eq:bigQCYdivisor}
  \varphi_\alpha \QC{t} \varphi_\beta =
  \sum_{\substack{d \in \NE(Y): \\
      d = d_1 \beta_1 + \cdots + d_r \beta_r}}
  \sum_{ n \geq 0 }
  \correlator{\varphi_{\alpha},\varphi_{\beta},t_{\rm
      rest},\ldots,t_{\rm rest},\varphi^\gamma}^Y_{0,n+3,d} \\
  \times {Q_1^{d_1} \cdots Q_r^{d_r} \re^{d_1 t_1} \cdots \re^{d_r t_r}
    \over n!} \, \varphi_\gamma.
\end{multline}
Thus in the limit
\[
\begin{aligned}
  \Real \tau_i &\to -\infty, && 1 \leq i \leq s,\\
  \tau_i &\to 0, && \text{$i=0$ and $s<i\leq N$,}
\end{aligned}
\]
the big quantum product $\QC{\tau}$ on $\HorbNovX$ becomes the
Chen--Ruan product, and in the limit
\[
\begin{aligned}
  \Real t_i &\to -\infty, && 1 \leq i \leq r,\\
  t_i &\to 0, && \text{$i=0$ and $r<i\leq N$,}
\end{aligned}
\]
the big quantum product $\QC{t}$ on $\HNovY$ becomes the usual cup
product.  We refer to the points
\begin{align*}
  \tau_i =
  \begin{cases}
    - \infty & 1 \leq i \leq s \\
    0 & \text{otherwise}
  \end{cases}
  && \text{and} &&
  t_i =
  \begin{cases}
    - \infty & 1 \leq i \leq r \\
    0 & \text{otherwise}
  \end{cases}
\end{align*}
as the \emph{large-radius limit points} for $\cX$ and $Y$ respectively.

\subsection*{An Analyticity Assumption and Its Consequences}

The goal of this paper is to describe a relationship between the big
quantum products on $\HorbNovX$ and $\HNovY$.  The first obstacle to
overcome is that the ground rings $\NovX$ and $\NovY$ are in general
not isomorphic: $\NovY$ contains more quantum parameters ($Q_i:1 \leq
i \leq r$) than $\NovX$ does ($U_i: 1 \leq i \leq s$).  We now
describe an analyticity assumption on the big quantum product $\QC{t}$
for $Y$ which allows us to regard $\QC{t}$ as a family of algebra
structures on $H^\bullet(Y;\NovX)$: it allows us to set $Q_i = U_i$,
$1 \leq i \leq s$, and to specialize the extra quantum parameters
$Q_{s+1},\ldots,Q_r$ to $1$.  Roughly speaking, we
assume henceforth that the genus-zero Gromov--Witten potential $F_Y$,
which is a formal power series in the variables $t_0,\ldots,t_N$ and
$Q_1,\ldots,Q_r$, is \emph{convergent in the
  `exceptional variables'} $Q_{s+1},\ldots,Q_r$.

\begin{definition}
  Let $F \in \CC[\![x_0,x_1,x_2,\ldots]\!]$ be a formal power series
  in the variables $x_0,x_1,x_2,\ldots$\phantom{.}  Given distinct
  variables $x_{i_1},\ldots, x_{i_n}$ we can write $F$ uniquely in the
  form
  \[
  F = \sum_{J \subset \NN \setminus \{i_1,\ldots,i_n\}} \,
  \sum_{a:J \to \NN \setminus \{0\}}
  \mathfrak{f}_{J,a}
  \prod_{j \in J} x_j^{a(j)}
  \]
  where each $\mathfrak{f}_{J,a}$ is a formal power series in the
  variables $x_{i_1},\ldots,x_{i_n}$.  Let $D$ be a domain in $\CC^n$
  which contains the origin.  We say that $F$ \emph{depends
    analytically on $x_{i_1},\ldots,x_{i_n}$ in the domain $D$} if
  each $\mathfrak{f}_{J,a}$ is the Taylor expansion at the origin of
  $f_{J,a}(x_{i_1},\ldots,x_{i_n})$ for some analytic function
  $f_{J,a}:D \to \CC$.
\end{definition}

The genus-zero Gromov--Witten potential $F_Y$ is a formal power series
in the variables $t_0,\ldots,t_N$ and
$Q_1,\ldots,Q_r$.  Henceforth, we impose:

\begin{convassum} \label{convassum} There are strictly positive real
  numbers $R_i$, $s<i \leq r$, such that $F_Y$ depends analytically on
  $Q_{s+1},\ldots,Q_r$ in the domain
  \begin{align*}
    |Q_i| < R_i, && s<i \leq r.
  \end{align*}
\end{convassum}

\smallskip

\noindent This assumption holds, for instance, whenever $Y$ is a
compact semi-positive toric manifold.  As we will see, even though the
radii of convergence $R_i$ need not all be greater than $1$, this
assumption will allow us to set $Q_{s+1} = \cdots =
Q_r = 1$.  It follows from \eqref{eq:bigQCYdivisor} that under
Convergence Assumption~\ref{convassum}, $F_Y$ in fact depends
analytically on $t_1,t_2,\ldots,t_r$ and
$Q_{s+1},\ldots,Q_r$ in the domain
\begin{equation}
  \label{eq:firstregion}
  \begin{aligned}
    &|t_i| < \infty & 1 \leq i \leq s \\
    &|Q_i \re^{t_i}|<R_i & s<i \leq r.
  \end{aligned}
\end{equation}
Thus we can write $F_Y$ as
\[
  \sum_{\substack{
      J \subset \{0,r+1,r+2,\ldots,N\} \\
      K \subset \{1,2,\ldots,s\}}}
  \sum_{\substack{
      a:J \to \NN \setminus\{0\} \\
      b:K \to \NN \setminus\{0\}}}
  g_{J,a;K,b}\Big(t_1,\ldots,t_r;Q_{s+1},\ldots,Q_r\Big)
%  \\ \times 
  \prod_{j \in J} t_j^{a(j)} \prod_{k \in K} Q_k^{b(k)},
\]
where $g_{J,a;K,b}$ are analytic functions defined in the domain
\eqref{eq:firstregion}, and then set
\begin{equation}
  \label{eq:subst}
  Q_i =
  \begin{cases}
    U_i & 1 \leq i \leq s \\
    1 & s< i \leq r
  \end{cases}
\end{equation}
obtaining a well-defined power series
\[
F_Y^{\circledast} =
\sum_{\substack{
    J \subset \{0,r+1,r+2,\ldots,N\} \\
    K \subset \{1,2,\ldots,s\}}}
\sum_{\substack{
    a:J \to \NN \setminus\{0\} \\
    b:K \to \NN \setminus\{0\}}}
g_{J,a;K,b}\Big(t_1,\ldots,t_r;1,\ldots,1\Big)
\prod_{j \in J} t_j^{a(j)} \prod_{k \in K} U_k^{b(k)}
\]
in the variables $t_0,t_{r+1},t_{r+2},\ldots,t_N$ and
$U_1,\ldots,U_s$, with coefficients which are analytic
functions of $t_1,\ldots,t_r$ defined in the region
\begin{equation}
  \label{eq:strangeregion}
  \begin{aligned}
    &|t_i| < \infty & 1 \leq i \leq s \\
    &|\re^{t_i}|<R_i & s<i \leq r.
  \end{aligned}
\end{equation}
We can also make the substitution \eqref{eq:subst} in the big quantum
product \eqref{eq:bigQCY}, obtaining a well-defined family of products
$\newQC{t}$ on $H^\bullet(Y;\NovX)$ which depends formally on the
variables $t_0,t_{r+1},t_{r+2},\ldots,t_N$ and analytically on the
variables $t_1,\ldots,t_r$ in the domain \eqref{eq:strangeregion}.
The product $\newQC{t}$ satisfies
\[
\varphi_\alpha \newQC{t} \varphi_\beta =
{\partial^3 F_Y^{\circledast} \over \partial t_\alpha \partial t_\beta
  \partial t_\gamma} \varphi^\gamma
\]
and
\begin{multline}
  \label{eq:newQCYdivisor}
  \varphi_\alpha \newQC{t} \varphi_\beta =
  \sum_{\substack{d \in \NE(Y): \\
      d = d_1 \beta_1 + \cdots + d_r \beta_r}}
  \sum_{ n \geq 0 }
  \correlator{\varphi_{\alpha},\varphi_{\beta},t_{\rm
      rest},\ldots,t_{\rm rest},\varphi^\gamma}^Y_{0,n+3,d} \\
  \times {U_1^{d_1} \cdots U_s^{d_s} \re^{d_1 t_1} \cdots \re^{d_r t_r}
    \over n!} \, \varphi_\gamma
\end{multline}
where $t_{\rm rest}$ is defined in \eqref{eq:tworest}.

We do not impose any convergence assumption on the Gromov--Witten
potential $F_\cX$, which is a formal power series in
$\tau_0,\ldots,\tau_N$ and $U_1,\ldots, U_s$, but
nonetheless it depends analytically on the variables
$\tau_1,\ldots,\tau_s$ in the domain $\CC^s$.  This is clear from
equation \eqref{eq:bigQCXdivisor}.

\section{Givental's Lagrangian Cone}
\label{sec:cone}

The key objects in conjecture~\ref{conj} are certain Lagrangian
submanifold-germs $\cLX$ and $\cLY$.  In this section we define $\cLX$
and $\cLY$ and describe some of their
properties.

\subsection*{A Symplectic Vector Space}

Throughout this section, let $\cZ$ denote either $\cX$ or $Y$.  We
work over the ground ring $\NovZ = \NovX$.  Let
\begin{align*}
  \cHZ &= \HorbNovZ \otimes \CC(\!(z^{-1})\!), \\
  \Omega_\cZ(f,g) &= \Res_{z=0} \big( f(-z), g(z) \big)_{\cZ} \, dz.
\end{align*}
We think of $\cHZ$ as a sort of `symplectic vector space', but defined
over the ring $\Lambda$ rather than over a field.  $\cHZ$ is a free
graded $\Lambda$-module, where $\deg z = 2$, and $\Omega_{\cZ}$ is a
$\NovZ$-linear, $\NovZ$-valued supersymplectic form on $\cHZ$:
\begin{align*}
  \Omega_\cZ(\theta_1 z^k, \theta_2 z^l) =
  (-1)^{a_1 a_2+1}   \Omega_\cZ(\theta_2 z^l, \theta_1 z^k)
  && \text{for $\theta_i \in \HorbiZ{a_i}$.}
\end{align*}
There is a decomposition $\cHZ = \cHZ^+ \oplus \cHZ^-$, where the
subspaces
\begin{align*}
  \cHZ^+ = \HorbNovZ \otimes \CC[z] && \text{and} &&
  \cHZ^- = z^{-1} \HorbNovZ \otimes \CC[\![z^{-1}]\!]
\end{align*}
are Lagrangian.  We can write a general point in $\cHZ$ as
\begin{equation}
  \label{eq:Darboux}
  \sum_{k = 0}^\infty \sum_{a = 0}^N q_{k,a} \Phi_a z^k +
  \sum_{l = 0}^\infty \sum_{b= 0}^N p_{l,b} \Phi^b
  (-z)^{-1-l}
\end{equation}
where $\Phi_a = \phi_a$ and $\Phi^a = \phi^a$ if $\cZ = \cX$, and
$\Phi_a = \varphi_a$ and $\Phi^a = \varphi^a$ if $\cZ = Y$; this
defines $\NovZ$-valued Darboux co-ordinates $\{q_{k,a}, p_{l,b}\}$ on
$\cHZ$, with $q_{k,a}$ dual to $p_{k,a}$.  Set $q_k = \sum_a q_{k,a}
\Phi_a$, so that $\bfq(z) = q_0 + q_1 z + q_2 z^2 + \cdots$ is a
general point in $\cHZ^+$.

\subsection*{The Genus-Zero Descendant Potentials}

We consider now the \emph{genus-zero descendant potentials}
$\cF^0_\cX$ and $\cF^0_Y$, which are generating functions for all
genus-zero Gromov--Witten invariants of $\cX$ and $Y$.  Set $\tau_a =
\tau_{a,\alpha} \phi_\alpha$, $a = 0,1,2,\ldots$\phantom{.}  Then
\begin{align}
  \cF^0_\cX &=
  \sum_{d \in \NE(\cX)}
  \sum_{n \geq 0}
  \sum_{a_1,\ldots,a_n \geq 0}
  \correlator{\tau_{a_1} \psi^{a_1},\tau_{a_2}
    \psi^{a_2},\ldots,\tau_{a_n} \psi^{a_n}}^{\cX}_{0,n,d}
  { U^{d} \over n!} \label{eq:F0X} \\
  &= \notag
  \sum_{d \in \NE(\cX)}
  \sum_{ n \geq 0 }
  \sum_{a_1,\ldots,a_n \geq 0}
  \correlator{\phi_{\epsilon_1} \psi^{a_1},\ldots,
    \phi_{\epsilon_n} \psi^{a_n}}^\cX_{0,n,d}
  {U_1^{d_1} \cdots U_s^{d_s} \tau_{a_1,\epsilon_1} \cdots
  \tau_{a_n,\epsilon_n} \over n!}
\end{align}
where $d = d_1 \pi_\star \beta_1 + \cdots + d_s \pi_\star \beta_s$.
The descendant potential $\cF^0_\cX$ is a formal power series in the
variables $U_1,\ldots,U_s$ and $\tau_{a,\epsilon}$, $0
\leq \epsilon \leq N$, $0 \leq a < \infty$.  We show in the appendix
that $\cF^0_\cX$ in fact depends analytically on
$\tau_{0,1},\ldots,\tau_{0,s}$ in the domain $\CC^s$.  Similarly, set
$t_a = t_{a,\alpha} \varphi_\alpha$, $a = 0,1,2,\ldots$\phantom{.}
Then
\begin{align}
  \cF^0_Y &=
  \sum_{d \in \NE(Y)}
  \sum_{n \geq 0}
  \sum_{a_1,\ldots,a_n \geq 0}
  \correlator{t_{a_1} \psi^{a_1},t_{a_2} \psi^{a_2},\ldots,t_{a_n} \psi^{a_n}}^Y_{0,n,d}
  { Q^d \over n!}  \label{eq:F0Y} \\
  &= \sum_{d \in \NE(Y)}
  \sum_{ n \geq 0 }
  \sum_{a_1,\ldots,a_n \geq 0}
  \correlator{\varphi_{\epsilon_1} \psi^{a_1},\ldots,
    \varphi_{\epsilon_n} \psi^{a_n}}^Y_{0,n,d}
  {Q_1^{d_1} \cdots Q_r^{d_r} t_{a_1,\epsilon_1} \cdots
  t_{a_n,\epsilon_n} \over n!} \notag
\end{align}
where $d = d_1 \beta_1 + \cdots + d_r \beta_r$.  The descendant
potential $\cF^0_Y$ is a formal power series in the variables
$Q_1,\ldots,Q_r$ and $t_{a,\epsilon}$, $0 \leq
\epsilon \leq N$, $0 \leq a < \infty$.  We will show in the appendix
that under convergence assumption \ref{convassum}, $\cF^0_Y$ in fact
depends analytically on $t_{0,1},\ldots,t_{0,r}$ and
$Q_{s+1},\ldots,Q_r$ in the domain
\begin{equation}
  \label{eq:secondregion}
  \begin{aligned}
    &|t_{0,i}| < \infty & 1 \leq i \leq s \\
    &|Q_i \re^{t_{0,i}}|<R_i & s<i \leq r.
  \end{aligned}
\end{equation}
This will allow us, as before, to set $Q_{s+1} = \cdots =
Q_r = 1$: we can write $\cF^0_Y$ as
\begin{multline*}
  \sum_{\substack{
      J \subset \NN \times \{0,1,2,\ldots,N\}: \\
      J \cap \{(0,1),(0,2),\ldots,(0,r)\} = \varnothing}}
  \sum_{K \subset \{1,2,\ldots,s\}} 
  \sum_{\substack{
      a:J \to \NN \setminus\{0\} \\
      b:K \to \NN \setminus\{0\}}}
  g_{J,a;K,b}\Big(t_{0,1},\ldots,t_{0,r};Q_{s+1},\ldots,Q_r\Big) \\
  \times \prod_{(j,e) \in J} t_{j,e}^{a(j,e)} \prod_{k \in K} Q_k^{b(k)}
\end{multline*}
where $g_{J,a;K,b}$ are analytic functions defined in the domain
\eqref{eq:secondregion}, and making the substitution \eqref{eq:subst}
yields a well-defined power series
\begin{multline}
  \label{eq:modifiedF0Y}
  \cF^\circledast_Y = \sum_{\substack{
      J \subset \NN \times \{0,1,2,\ldots,N\}: \\
      J \cap \{(0,1),(0,2),\ldots,(0,r)\} = \varnothing}}
  \sum_{K \subset \{1,2,\ldots,s\}}
  \sum_{\substack{
      a:J \to \NN \setminus\{0\} \\
      b:K \to \NN \setminus\{0\}}} 
  g_{J,a;K,b}\Big(t_{0,1},\ldots,t_{0,r};1,\ldots,1\Big) \\
  \times \prod_{(j,e) \in J} t_{j,e}^{a(j,e)} \prod_{k \in K} U_k^{b(k)}
\end{multline}
in the variables $t_{0,0}$; $t_{0,r+1},t_{0,r+2},\ldots,t_{0,N}$;
$t_{a,\epsilon}$, $0 \leq \epsilon \leq N$, $1 \leq a < \infty$; and
$U_1,\ldots,U_s$, with coefficients which are analytic functions of
$t_{0,1},\ldots,t_{0,r}$ defined in the domain
\begin{equation}
  \label{eq:secondstrangeregion}
  \begin{aligned}
    &|t_{0,i}| < \infty & 1 \leq i \leq s \\
    &|\re^{t_{0,i}}|<R_i & s<i \leq r.
  \end{aligned}
\end{equation}
Thus, exactly as before, Convergence Assumption~\ref{convassum} allows
us to work over the Novikov ring $\Lambda = \Lambda_\cX$ for $\cX$,
even when we are thinking about Gromov--Witten invariants of $Y$.

\subsection*{The Definition of $\cLX$ and $\cLY$}

We regard the genus-zero descendant potential $\cF^0_\cX$ as the germ
of a function on $\cHX^+$ via the identification
\begin{equation}
  \label{eq:dilatonshiftX}
  q_{k,\alpha} =
  \begin{cases}
    \tau_{1,0} - 1 & (k,\alpha) = (1,0) \\
    \tau_{k,\alpha} & \text{otherwise,}
  \end{cases}
\end{equation}
which we abbreviate as $\bfq(z) = \btau(z) - z$.  We regard
$\cF^\circledast_Y$ as the germ of a function on $\cHY^+$ via the
identification
\begin{equation}
  \label{eq:dilatonshiftY}
  q_{k,\alpha} =
  \begin{cases}
    t_{1,0} - 1 & (k,\alpha) = (1,0) \\
    t_{k,\alpha} & \text{otherwise,}
  \end{cases}
\end{equation}
which we abbreviate as $\bfq(z) = \bt(z) - z$.  The identifications
\eqref{eq:dilatonshiftX} and \eqref{eq:dilatonshiftY} are examples of
the \emph{dilaton shift}; this is discussed further in \cite{Coates}.
Let $\cF_\cZ = \cF^0_\cX$ if $\cZ = \cX$ and $\cF_\cZ =
\cF^\circledast_Y$ if $\cZ = Y$.  We define $\cLZ$ by the equations
\begin{align}
  \label{eq:defofLZ}
  p_{k,\alpha} =\parfrac{\cF_\cZ}{q_{k,\alpha}} &&
  \begin{aligned}
    0 &\leq k < \infty, \\
    0 &\leq \alpha \leq N.
  \end{aligned}
\end{align}
As $\cF_\cZ$ is the germ of a function on $\cHZ^+$ (depending
analytically on some variables and formally on other variables),
$\cLZ$ is the germ of a Lagrangian submanifold of $\cHZ$.

\begin{rem}
  The polarization $\cHZ = \cHZ^+ \oplus \cHZ^-$ identifies $\cHZ^-$
  with the $\Lambda$-module $\(\cHZ^+\)^\star := \Hom(\cHZ^+,\Lambda)$
  dual to $\cHZ^+$, and hence identifies $\cHZ$ with the cotangent
  bundle $T^\star \cHZ^+ := \cHZ^+ \oplus \(\cHZ^+\)^\star$.  Under
  this identification, $\cLZ$ becomes the graph of the differential of
  $\cF_\cZ$.
\end{rem}

The Gromov--Witten invariants which participate in the definition of
$\cLZ$ satisfy a large number of identities: the String Equation, the
Dilaton Equation, and the Topological Recursion Relations.  These
identities place very strong constraints on the geometry of $\cLZ$:

\begin{theorem}[\cite{Coates--Givental:QRRLS,Givental:symplectic,Tseng}]
  \label{thm:cone} $\cLZ$ is the germ of a Lagrangian cone with vertex
  at the origin such that each tangent space $T$ to $\cLZ$ is tangent
  to the cone exactly along $zT$.  In other words:
  \begin{enumerate}
  \item if $T$ is a tangent space to $\cLZ$ then $zT \subset T$;
  \item if $T = T_x \cLZ$ then the germ at $x$ of the linear subspace
    $zT$ is contained in $\cLZ$;
  \item if $T$ is a tangent space to $\cLZ$ and $x \in \cLZ$ then $T_x
    \cLZ = T$ if and only if $x \in zT$.
  \end{enumerate}
\end{theorem}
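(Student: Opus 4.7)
The theorem packages three universal identities for genus-zero Gromov--Witten invariants: the cone property corresponds to the \emph{Dilaton Equation}, the $z$-stability of tangent spaces to the \emph{Topological Recursion Relations} (TRR), and the ruling of $\cLZ$ by linear subspaces $zT$ to these combined with the \emph{String Equation}.  The strategy is to translate each axiom into a PDE for $\cF_\cZ$ and read off the geometric consequence from the description of $\cLZ$ as the graph of $d\cF_\cZ$.  For the cone property, the Dilaton Equation
\begin{equation*}
\correlator{\tau_1(\fun_\cZ), \tau_{a_1}(\delta_1), \ldots, \tau_{a_n}(\delta_n)}^\cZ_{0, n+1, d} = (n-2) \correlator{\tau_{a_1}(\delta_1), \ldots, \tau_{a_n}(\delta_n)}^\cZ_{0, n, d}
\end{equation*}
(valid for $n \geq 2$), after differentiating in $q_{1,0}$ and using the dilaton shift $\bfq(z) = \btau(z) - z$, is equivalent to the homogeneity $\sum_{k, \alpha} q_{k,\alpha} \partial_{q_{k,\alpha}} \cF_\cZ = 2 \cF_\cZ$.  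Hence the defining functions $p_{k,\alpha} = \partial_{q_{k,\alpha}} \cF_\cZ$ are homogeneous of degree one, so $\cLZ$ is invariant under $(\bfq, \bfp) \mapsto (\lambda \bfq, \lambda \bfp)$ and is a cone with vertex at the origin.

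Next, the genus-zero TRR says that for any classes $\delta_0, \delta_1, \delta_2$ and any additional insertions,
\begin{equation*}
\correlator{\tau_{a+1}(\delta_0), \tau_b(\delta_1), \tau_c(\delta_2), \ldots}^\cZ_0 = \sum_\mu \correlator{\tau_a(\delta_0), \phi_\mu, \ldots}^\cZ_0 \cdot \correlator{\phi^\mu, \tau_b(\delta_1), \tau_c(\delta_2), \ldots}^\cZ_0,
\end{equation*}
summed over splittings of the remaining marked points between the two factors.  Repackaging this as an identity on derivatives of $\cF_\cZ$ and viewing a tangent vector $v \in T_x \cLZ$ as a variation in the graph presentation shows that $z v$ is again tangent to $\cLZ$ at $x$: multiplication by $z$ corresponds to raising a $\psi$-exponent, and the TRR factorizes the result via gluing at a node into a product that stays tangent to $\cLZ$.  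This proves (1).  Combining $z$-stability with the cone property, the linear subspace $zT$ actually lies on $\cLZ$ near $x$: the cone property places the ray through the origin in the tangent space, and the String Equation (inserting $\tau_0(\fun_\cZ)$ yields the sum of index-lowering terms) is exactly what is needed to exhibit $x$ itself as an element of $zT$, so that the germ of $zT$ at $x$ makes sense and lies inside $\cLZ$.  This gives (2).

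Finally, for the equivalence in (3), one direction follows because along the linear ruling $zT \subset \cLZ$ the tangent distribution is constant equal to $T$ by linearity; for the converse, the evaluation-at-$z = 0$ map induces an isomorphism $T / zT \cong \HorbNovZ$, and this quotient parameterizes the transverse directions in which $T_y \cLZ$ genuinely changes as $y$ moves on $\cLZ$.  Hence if $T_{x'} \cLZ = T_x \cLZ = T$, then $x' - x \in zT$, completing the proof.  The main obstacle in this program is the second step: converting the combinatorial TRR identity into the geometric statement $z T_x \cLZ \subset T_x \cLZ$ requires carefully repackaging the higher derivatives of $\cF_\cZ$ as natural operators on $\cHZ$ and verifying that $z$-multiplication on $\cHZ$ intertwines correctly with the TRR factorization.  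I would follow the detailed treatment in \cite{Givental:symplectic} and \cite{Coates--Givental:QRRLS} (with \cite{Tseng} handling the orbifold case) to carry out this step.
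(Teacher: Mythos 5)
The paper itself does not prove Theorem~\ref{thm:cone}: it is stated as a cited result from Givental, Coates--Givental, and Tseng, and the paper's only remark on the matter is the sentence immediately preceding it, which names the String Equation, Dilaton Equation, and Topological Recursion Relations as the source of these geometric constraints. Your proposal correctly identifies exactly those three universal identities as the engine of the proof, so at the level of the overall strategy you are in agreement with the references. Your translation of the Dilaton Equation into the homogeneity
$\sum_{k,\alpha} q_{k,\alpha}\,\partial_{q_{k,\alpha}}\cF_\cZ = 2\cF_\cZ$
after the dilaton shift, and hence into the cone property, is the standard and correct argument.

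However, there are two genuine gaps. First, your handling of the ``$\Leftarrow$'' direction of item~(3) is not an argument: you assert that the tangent space of $\cLZ$ is constant along the linear subspace $zT$ \emph{by linearity}, but the containment of a linear subspace in a submanifold does not by itself force the tangent spaces of the submanifold to be constant along it (a generic smooth cone in $\RR^3$ containing a line through the origin already illustrates this). The constancy of the tangent space along $zT$ is one of the nontrivial conclusions of the theorem, and in Givental's treatment it is derived from the TRR by showing that the tangent spaces $T_{J(\tau)}\cLZ$ depend only on a finite-dimensional parameter and that moving in the $z T$ directions does not change that parameter; it needs a computation, not an appeal to linearity. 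Second, your argument for the ``$\Rightarrow$'' direction of (3) is aimed at the wrong statement: you show that $T_{x'}\cLZ = T_x\cLZ$ implies $x'-x\in zT$, which is a uniqueness claim, whereas the actual assertion $x\in zT_x\cLZ$ for every $x\in\cLZ$ is a direct consequence of the String Equation (combined with the dilaton shift), and the references derive it that way. The deduction from your $T/zT$ statement would still need a separate base point with $x_0\in zT_{x_0}\cLZ$ to get started, and producing that base point is exactly the String Equation step you have omitted. So the ingredients are right, but the arguments for (2) and both directions of (3) need to be replaced by the actual computations with $\cF_\cZ$ rather than the heuristic appeals you give.
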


In particular, theorem~\ref{thm:cone} implies that each tangent space
$T$ to $\cLZ$ is closed under multiplication by elements of $\CC[z]$
(because $zT \subset T$), and that $\cLZ$ is the union, over all
tangent spaces $T$ to $\cLZ$, of the infinite-dimensional linear
subspace-germs $zT \cap \cLZ$.  It is the germ of a `ruled cone'.
Note that as $\cLZ$ is the germ of a submanifold of $\cHZ$, it makes
sense to analytically continue $\cLZ$.

\section{The Crepant Resolution Conjecture}
\label{sec:conj}

We are now in a position to make our conjecture.

\begin{conj}[Coates--Corti--Iritani--Tseng; Ruan]
  \label{conj} There is a degree-preserving $\CC(\!(z^{-1})\!)$-linear
  symplectic isomorphism $\U:\cHX \to \cHY$ and a choice of analytic
  continuations of $\cLX$ and $\cLY$ such that $\U\(\cLX\) = \cLY$.
  Furthermore, $\U$ satisfies:
  \begin{itemize}
  \item[(a)] $\U(\fun_\cX) = \fun_Y + O(z^{-1})$;
  \item[(b)] $\U \circ \(\rho \CR\) = \({\pi^\star \rho} \, \cup\)
    \circ \U$ for every untwisted degree-two class $\rho \in
    H^2(\cX;\CC)$;
  \item[(c)] $\U\(\cHX^+ \)\oplus \cHY^- = \cHY$;
  \item[(d)] the matrix entries of $\U$ with respect to the bases
    $\{\phi_\alpha\}$ and $\{\varphi_\beta\}$, which \emph{a priori}
    are elements of $\Nov(\!(z^{-1})\!)$, in fact lie in
    $\CC(\!(z^{-1})\!)$.
  \end{itemize}
\end{conj}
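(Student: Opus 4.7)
The plan is to attack the conjecture through Givental's mirror symmetry framework, which provides explicit parametrizations of the cones $\cLX$ and $\cLY$ via so-called $I$-functions. First I would construct $I$-functions $I_\cX(\tau,z)$ and $I_Y(t,z)$ for $\cX$ and $Y$ respectively. For toric (or toric complete intersection) orbifolds these are explicit hypergeometric series in the Novikov/K\"ahler parameters with coefficients in $\HorbX$, and by Givental's orbifold mirror theorem one has $I_\cX(\tau,z) \in \cLX$ and $I_Y(t,z) \in \cLY$. Theorem~\ref{thm:cone} then implies that $\cLX$ and $\cLY$ are recovered as the closures (under the $\CC[z]$-action and derivatives) of the families of generators $z\,T_{I_\cX} \cap \cLX$ and $z\,T_{I_Y} \cap \cLY$, so it suffices to match the $I$-functions after analytic continuation.

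Second, I would perform the analytic continuation of $I_\cX$ from the large-radius limit point of $\cX$ to that of $Y$ by means of a Mellin--Barnes integral representation: the hypergeometric coefficients in $I_\cX$ are quotients of $\Gamma$-factors, and moving the Mellin contour past the poles re-expresses the sum as a series natural at the large-radius limit of $Y$. Identifying the resulting expression with $I_Y(t,z)$ via a mirror map $\tau = \tau(t)$ produces an identity
\begin{equation*}
I_\cX\bigl(\tau(t),z\bigr) \;=\; \U(z)\cdot I_Y(t,z),
\end{equation*}
where $\U(z)$ is a matrix whose entries are built from residues of $\Gamma$-factors and hence are manifestly independent of the Novikov parameters: this delivers property~(d). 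The $\CC(\!(z^{-1})\!)$-linearity and the symplectic property of $\U$ would be checked by comparing $\Omega_\cX$ and $\Omega_Y$ on the distinguished `$J$-function' slices singled out by the dilaton shift, exploiting the fact that under mirror symmetry the Poincar\'e pairings pull back to the Poincar\'e pairings of the relevant Landau--Ginzburg model.

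Third, once $\U$ is identified from the $I$-function match, the equality $\U(\cLX) = \cLY$ propagates from $I$-functions to the full cones by Theorem~\ref{thm:cone}: the cones are ruled by the $\CC[z]$-spans of tangent spaces along $I_\cX$ and $I_Y$, and $\U$ is $\CC(\!(z^{-1})\!)$-linear. Properties~(a) and~(b) are then read off from the leading behaviour of $\U(z)$ near the large-radius limits: (a) because both $I$-functions begin with the respective identity classes plus $O(z^{-1})$, so the leading term of $\U$ must send $\fun_\cX$ to $\fun_Y$; (b) because the divisor equation, applied on both sides of the $I$-function match, forces $\U$ to intertwine the $H^2$-actions given by $\cup\, \pi^\star\rho$ on $Y$ and by $\rho\CR{}$ on $\cX$ at the large-radius limits, where the big quantum products degenerate to $\cup$ and $\CR$ respectively.

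The main obstacle is the analytic continuation itself and, relatedly, the verification of property~(d). For toric $\cX$ with $c_1(\cX)\geq 0$ (the setting of the forthcoming work of Coates--Corti--Iritani--Tseng mentioned in the introduction) the hypergeometric/Mellin--Barnes machinery controls everything explicitly; the $\Gamma$-factor residues deliver a Novikov-independent $\U$ essentially for free, and convergence of $I_\cX$ and of the mirror map is tractable. Outside this class — for non-toric $\cX$, or when the $c_1 \geq 0$ condition fails and one loses convergence of the naive $I$-function — one must replace the Mellin--Barnes argument by a more conceptual global analysis of the quantum $\cD$-module (matching monodromy-invariant lattices on the two sides), and property~(d) becomes the central nontrivial statement: it asserts that the transport between $\cLX$ and $\cLY$ is a purely cohomological (Novikov-free) operation, reflecting the expected equivalence of the underlying physical string theories. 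Property~(c), the transversality of $\U(\cHX^+)$ with $\cHY^-$, is then equivalent to the invertibility of the mirror map $\tau(t)$ and serves as the final consistency check.
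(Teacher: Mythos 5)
The statement you are asked about is \emph{Conjecture}~\ref{conj}, not a theorem: the paper states it and explores its consequences (Theorems~\ref{thm:CCRC},~\ref{thm:RuanCRC},~\ref{thm:BG}, Conjecture~\ref{conj3}), but offers no proof and explicitly describes it as open. There is therefore no in-paper argument to compare your proposal against; the honest review is that you have proposed a \emph{strategy for partial verification}, not a proof of the conjecture.

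That said, the strategy you outline is exactly the one the paper attributes to the existing and forthcoming evidence for Conjecture~\ref{conj}: Givental-style mirror theorems providing $I$-functions lying on $\cLX$ and $\cLY$, Mellin--Barnes continuation of the $\Gamma$-factor hypergeometric series to move between the large-radius limits, and extraction of a $z$-dependent but Novikov-independent connection matrix $\U(z)$. This is the method of \cite{CCIT:crepant1} for $\PP(1,1,2)$ and $\PP(1,1,1,3)$ and of the forthcoming toric $c_1 \geq 0$ work cited in the introduction, and your correct recognition that the ruling structure of Theorem~\ref{thm:cone} lets an $I$-function match propagate to an equality of cones is the right mechanism. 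You are also right to flag that outside the toric semi-positive class this machinery does not exist, that property~(d) is the genuinely delicate global statement, and that one would need a $\cD$-module--level replacement for Mellin--Barnes there.

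Two caveats on the details. First, your claim that the symplectic property of $\U$ can be ``checked by comparing $\Omega_\cX$ and $\Omega_Y$ on the $J$-function slices'' is too weak: the $J$-function slice is a single Lagrangian germ, and agreement of a bilinear form on a Lagrangian does not pin down the form; one must verify $\Omega_Y(\U f, \U g) = \Omega_\cX(f,g)$ for all $f,g$, which in the mirror approach is usually argued from the $\Gamma$-integral-structure compatibility with the pairings, not from the dilaton-shift slice alone. Second, your assertion that property~(c) is ``equivalent to the invertibility of the mirror map'' is not justified as stated; the paper's Remark~\ref{rem:twoproducts} shows (c) is the precise transversality needed for the analytically continued Frobenius structure of $Y$ to exist near the large-radius limit of $\cX$, which is a stronger infinite-dimensional condition than nonsingularity of the finite-dimensional mirror-map Jacobian. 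If you wish to claim an equivalence here you would need to argue it, not assert it.
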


\begin{rem}
  This conjecture emerged in two different contexts during the ``New
  Topological Structures in Physics'' program at the Mathematical
  Sciences Research Institute, Berkeley, in the spring of
  2006. Conversations between the authors led to the idea that the
  relationship between the quantum cohomology of $\cX$ and $Y$ should
  be expressed as the assertion that $\U(\cLX) = \cLY$ for some
  $\CC(\!(z^{-1})\!)$-linear symplectic isomorphism $\U$. At the same
  time, guided by mirror symmetry, Hiroshi Iritani found such a
  symplectic transformation in toric examples (as a part of a project
  \cite{CCIT:crepant1} with Coates, Corti, and Tseng). Condition (c)
  here is a stronger version of the condition (c) given in
  \cite{CCIT:crepant1}*{\S5}. We will need this stronger version for
  the Cohomological Crepant Resolution Conjecture below.
\end{rem}

\begin{rem}
  Variants of conjecture~\ref{conj} apply to the $G$-equivariant
  quantum cohomology of $G$-equivariant crepant resolutions, and to
  crepant resolutions of certain non-compact orbifolds (\emph{c.f.}
  \cite{Bryan--Graber}).  We leave the necessary modifications to the
  reader.
\end{rem}

\subsection*{What Do The Conditions Mean?}

Without condition (a) any non-zero scalar multiple of $\U$ would also
satisfy the conjecture, because $\cLX$ and $\cLY$ are germs of cones.
The fact that $\U$ is degree-preserving forces $\U(\fun_\cX) = \lambda
\fun_Y + O(z^{-1})$ for some scalar $\lambda$, and so condition (a)
just fixes this overall scalar multiple.

Condition (b) is a compatibility of monodromy.  The A-model connection
--- a system of differential equations associated to the small quantum
cohomology of $Y$ \cite{Cox--Katz}*{\S8.5} --- is regular singular
along the normal-crossing divisor $Q_1 Q_2 \cdots Q_r = 0$, and the
log-monodromy around $Q_i = 0$ is given by cup product with
$\varphi_i$; a similar statement holds for $\cX$.  Condition (b)
asserts that $\U$ matches up these monodromies.

Condition (c) ensures that both the quantum cohomology of $\cX$ and
the analytic continuation of the quantum cohomology of $Y$ make sense
near the large-radius limit point for $\cX$.  This is explained in
detail in Remark~\ref{rem:twoproducts} below.

Condition (d) says that $\U$ is `independent of Novikov variables'.

\section{Basic Properties of the Transformation $\U$}
\label{sec:properties}

Before we explore the implications of conjecture~\ref{conj}, we list
various basic properties of the transformation $\U$.  As we have
chosen homogeneous bases for $\HorbX$ and $H^\bullet(Y;\CC)$ and as
$\U$ is grading-preserving, we can represent the transformation $\U$
by an $(N+1) \times (N+1)$ matrix, each entry of which is a Laurent
monomial in $z$ of fixed degree.  The matrix entries are independent
of Novikov variables, so each entry is the product of a complex number
and a fixed power of $z$.  $\U$ is therefore a Laurent polynomial in
$z$.  For example, if $\cX = \PP(1,1,1,3)$, $Y = \FF_3$, and we choose
bases as in \cite{CCIT:crepant1}, then
\[
\U =
\begin{pmatrix}
1 & 0 &0 & 0&0 &0  \\
0 & 1 &0 &0 &0 &0  \\
0 & 0 &1 &0 & 0 &0  \\
0 & 0 &0 &0 & -\frac{2\sqrt{3}\pi}{3\Gamma(\frac{1}{3})^3}z &
\frac{2\sqrt{3}\pi}{3\Gamma(\frac{2}{3})^3}  \\
-\frac{\pi^2}{3}z^{-2} & 0 &0 &0 &\frac{2\pi^2}{3\Gamma(\frac{1}{3})^3} &
\frac{2\pi^2}{3\Gamma(\frac{2}{3})^3}z^{-1} \\
-{8\zeta(3)} z^{-3} &0 &0 & 1 &
-\frac{2\sqrt{3}\pi^3}{9\Gamma(\frac{1}{3})^3} z^{-1} &
\frac{2\sqrt{3}\pi^3}{9\Gamma(\frac{2}{3})^3} z^{-2}
\end{pmatrix}.
\]
This illustrates the fact that even if the Gromov--Witten invariants
of $\cX$ and $Y$ are defined over $\QQ$, the transformation $\U$ may
only be defined over $\CC$.  Note that some of the matrix entries here
are `highly transcendental'.

\begin{lemma} \label{lem:lowdegree} Suppose that $\omega_i \in
  H^i_{\scriptscriptstyle \text{\rm CR}}(\cX;\CC)$.  Then:
  \begin{itemize}
  \item[(a)] $\U(\omega_{2r}) = z^r \rho_0 + O(z^{r-1})$ for some
    $\rho_0 \in H^0(Y;\CC)$, and if $\rho_0 \ne 0$ then $r=0$;
  \item[(b)] $\U(\omega_{2r+1}) = z^r \rho_1 + O(z^{r-1})$ for some
    $\rho_1 \in H^1(Y;\CC)$, and if $\rho_1 \ne 0$ then $r=0$.
  \item[(c)] $\U(\omega_{2r+2}) = z^r \rho_2 + O(z^{r-1})$ for some
    $\rho_2 \in H^2(Y;\CC)$, and if $\rho_2 \not \in \ker \pi_!$ then
    $r=0$.
  \end{itemize}
\end{lemma}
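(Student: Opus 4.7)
The first clause of (a) and (b) is a routine degree count: since $\U$ is degree-preserving with $\deg z = 2$ and its matrix entries in the chosen bases are scalars times fixed powers of $z$ (as noted at the start of this section), any homogeneous $\omega_k \in H^k_{\scriptscriptstyle\rm CR}(\cX;\CC)$ maps to a finite Laurent polynomial in $z$ whose $z^j$-coefficient lies in $H^{k-2j}(Y;\CC)$. Since $H^m(Y;\CC)$ vanishes for $m < 0$, the top $z$-power is at most $z^{\lfloor k/2\rfloor}$ with coefficient in $H^0(Y;\CC)$ or $H^1(Y;\CC)$ according as $k$ is even or odd. The first clause of (c) is obtained by combining this degree count with the vanishing half of (a): for $k = 2r+2$ the degree count gives $\U(\omega_{2r+2}) = z^{r+1}\sigma + z^r \rho_2 + O(z^{r-1})$ with $\sigma \in H^0(Y;\CC)$ and $\rho_2 \in H^2(Y;\CC)$, and the case $r' = r+1 \geq 1$ of (a) forces $\sigma = 0$.

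For the vanishing halves of (a) and (b) when $r \geq 1$, I would first invoke condition (b) of Conjecture~\ref{conj}: for untwisted $\rho \in H^2(\cX;\CC)$ and any $\omega \in \HorbX$, it gives $\U(\rho \CR \omega) = \pi^\star\rho \cup \U(\omega)$. The right-hand side has top $z$-power at most $z^{\lfloor \deg\omega/2\rfloor}$, while the degree count would allow the left-hand side to reach $z^{\lfloor \deg\omega/2\rfloor + 1}$; so the leading coefficient of $\U(\rho\CR\omega)$ at that top $z$-power must vanish. Combined with the base case $\U(\fun_\cX) = \fun_Y + O(z^{-1})$ from condition (a), this settles (a) and (b) for every class in the $\CR$-ideal generated by $\fun_\cX$ under multiplication by untwisted degree-two classes. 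An iterative application of the same intertwining also forces $\rho_2 = 0 \in \ker\pi_!$ in (c) for such classes.

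The main obstacle is extending the vanishing to classes outside that ideal --- chiefly twisted sector classes. For these I would combine condition (c) with the symplectic property of $\U$. The coefficient $c_0$ in $\U(\omega_{2r}) = c_0 z^r \fun_Y + O(z^{r-1})$ can be isolated as a symplectic pairing, $c_0 = (-1)^r \Omega_Y(\U(\omega_{2r}),\, z^{-r-1}\mathrm{pt}_Y)$, where $\mathrm{pt}_Y \in H^{2\dim Y}(Y;\CC)$ is Poincar\'e dual to $\fun_Y$; by symplecticity of $\U$ this equals $(-1)^r \Omega_\cX(\omega_{2r},\, \U^{-1}(z^{-r-1}\mathrm{pt}_Y))$. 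The task thus reduces to showing that the $z^{-1}$-coefficient of $\U^{-1}(z^{-r-1}\mathrm{pt}_Y)$ vanishes in $H^{2\dim\cX - 2r}_{\scriptscriptstyle\rm CR}(\cX;\CC)$ for $r \geq 1$. Here the intertwining relation (b) applied to $\U^{-1}$, together with the identity $\pi^\star\rho \cup \mathrm{pt}_Y = 0$ for every $\rho$ of positive degree, shows that $\U^{-1}(z^{-r-1}\mathrm{pt}_Y)$ is annihilated by $\rho \CR$ for all untwisted $\rho \in H^2(\cX;\CC)$; the polarization statement from condition (c) applied to $\U^{-1}$ then pins down the relevant coefficient. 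An analogous argument, now extracting the $H^2(Y;\CC)$-component of the leading term and using that $\ker\pi_!$ is cut out by pairings with $\pi^\star$-classes, handles the $\ker\pi_!$ assertion in (c). Cleanly lining up the interplay of (b), (c), symplecticity, and the cone structure of $\cLX$ for twisted sectors is the main technical difficulty.
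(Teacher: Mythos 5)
Your handling of the degree-count clauses is correct, and you are right that the potential $z^{r+1}$-term in part (c) is not ruled out by grading alone but requires the vanishing half of (a); the paper elides this step, so making it explicit is a genuine improvement. The difficulty is with the vanishing clauses, which are the actual content of the lemma.

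Your plan for them --- pass via symplecticity to $\U^{-1}(z^{-r-1}\,\mathrm{pt}_Y)$, observe that it is annihilated by $\rho\CR$ for every untwisted $\rho \in H^2(\cX;\CC)$, and then "pin down the relevant coefficient" using condition (c) --- does not close. Being annihilated by untwisted degree-two classes under $\CR$ does not force a Chen--Ruan class of intermediate degree to vanish: twisted-sector classes supported on loci disjoint from a generic divisor are annihilated this way, and Hard Lefschetz for $\HorbX$ is an extra hypothesis, not automatic. And condition (c), which reads $\U(\cHX^+)\oplus\cHY^- = \cHY$, constrains $\U$ on $\cHX^+$ but gives no control over the $z^{-1}$-coefficient of $\U^{-1}(z^{-r-1}\,\mathrm{pt}_Y)$; you do not say what "pins down the coefficient" means, and I do not see an argument. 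You flag this gap yourself, and I believe it is a real one rather than a presentational issue.

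The paper's proof sidesteps the need to analyze twisted sectors at all. Instead of pairing $\U(\omega_{2r})$ against the point class, it multiplies $\omega_{2r}$ in Chen--Ruan cohomology by a top power $\omega^D$ of a K\"ahler class $\omega \in H^2(\cX;\CC)$ (with $D = \dim_\CC\cX$), pushes through $\U$ using condition (b) to see that the leading $z^r$-coefficient of $\U(\omega^D\CR\omega_{2r})$ is $\lambda(\pi^\star\omega)^D$, nonzero whenever $\lambda \ne 0$; hence $\omega^D\CR\omega_{2r}\ne 0$. But that class has Chen--Ruan degree $2D+2r$, which exceeds the top degree $2D$ of $\HorbX$ when $r > 0$. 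This argument applies uniformly to the whole of $\HorbX$; no decomposition into twisted and untwisted parts is needed, and no Hard Lefschetz on $\HorbX$ is invoked. Parts (b) and (c) run the same way with $\omega^{D-1}$ or $\omega^{D-2}\CR\omega'$, using Hard Lefschetz for the ordinary cohomology $H^\bullet(\cX;\CC)$ (where it does hold) and the non-degeneracy of the Poincar\'e pairing. You should replace your symplecticity route with this multiplication-and-degree argument.
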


\begin{proof}
  (a) As $\U$ is grading-preserving, $\U(\omega_{2r}) = z^r \lambda
  \fun_Y + O(z^{r-1})$ for some $\lambda \in \CC$.  Write $D =
  \dim_{\CC}(\cX)$ and suppose that $\lambda \ne 0$.  Then, as $\cX$
  is K\"ahler and as the map $\pi^\star:H^\bullet(\cX;\CC) \to \HY$ is
  injective, there exists $\omega \in H^2(\cX;\CC)$
  such that $(\pi^\star \omega)^D \in H^{2D}(Y;\CC)$ is non-zero.  We
  have
  \begin{align*}
    \U\big(\overbrace{\omega \CR
    \cdots \CR
    \omega}^D \CR
    \omega_{2r} \big) &= z^r \lambda (\pi^\star \omega)^D + O(z^{r-1})
    \\
    & \ne 0,
  \end{align*}
  and hence $(\omega \CR)^D \CR \omega_{2r} \ne 0$.  For degree
  reasons, $r$ must be zero.

  (b) As $\U$ is grading-preserving, $\U(\omega_{2r+1}) = z^r \rho_1 +
  O(z^{r-1})$ for some $\rho_1 \in H^1(Y;\CC)$.  As
  $\pi^\star:H^1(\cX;\CC) \to H^1(Y;\CC)$ is an isomorphism, we have
  $\rho_1 = \pi^\star \theta_1$ for some $\theta_1 \in H^1(\cX;\CC)$.
  Suppose that $\rho_1 \ne 0$.  By Hard Lefschetz for
  $H^\bullet(\cX;\CC)$ (ordinary cohomology not Chen--Ruan
  cohomology), there exists $\omega \in H^2(\cX;\CC)$ such that
  $\omega^{D-1} \theta_1 \in H^{2D-1}(\cX;\CC)$ is non-zero.
  Injectivity of $\pi^\star$ gives $(\pi^\star \omega)^{D-1} \rho_1
  \ne 0$, and so
  \begin{align*}
    \U\big(\overbrace{\omega \CR
    \cdots \CR
    \omega}^{D-1} \CR
    \omega_{2r+1} \big) &= z^r (\pi^\star \omega)^{D-1} \rho_1 + O(z^{r-1})
    \\ & \ne 0.
  \end{align*}
  As before, this forces $r=0$.

  (c) As $\U$ is grading-preserving, $\U(\omega_{2r+2}) = z^r \rho_2 +
  O(z^{r-1})$ for some $\rho_2 \in H^2(Y;\CC)$.  Suppose that $\rho_2
  \not \in \ker \pi_!$.  Then there exist $\omega, \omega' \in
  H^2(\cX;\CC)$ such that $\int_\cX \pi_! \rho_2 \cup \omega^{D-2}
  \cup \omega' \ne 0$; here we used the non-degeneracy of the
  Poincar\'e pairing and Hard Lefschetz for $H^\bullet(\cX;\CC)$.
  Thus $\int_Y \rho_2 \cup \pi^\star \omega^{D-2} \cup \pi^\star
  \omega' \ne 0$, and so $\U(\omega_{2r+2}) \cup \pi^\star
  \omega^{D-2} \cup \pi^\star \omega' \ne 0$.  But
  \[
  \U(\omega_{2r+2}) \cup \pi^\star
  \omega^{D-2} \cup \pi^\star \omega' =
  \U\big(\overbrace{\omega \CR
    \cdots \CR
    \omega}^{D-2} \CR \omega' \CR
  \omega_{2r+2} \big)
  \]
  and as this is non-zero we must, for degree reasons, have $r=0$.
\end{proof}

\begin{lemma} \label{lem:U0} Suppose that $\U$ sends $\cHX^-$ to
  $\cHY^-$, so that
  \[
  \U = U_0 + U_1 z^{-1} + \cdots + U_k z^{-k}
  \]
  for some non-negative integer $k$ and some linear maps $U_i : \HorbX
  \to H^\bullet(Y;\CC)$.  Then:
  \begin{itemize}
  \item[(i)] $U_0$ is grading-preserving;
  \item[(ii)] $U_0$ maps $\fun_\cX$ to $\fun_\cY$;
  \item[(iii)] $U_0$ maps $\rho \in H^2(\cX;\CC)$ to $\pi^\star \rho
    \in H^2(Y;\CC)$;
  \item[(iv)] $U_0$ identifies the orbifold Poincar\'e pairing on
    $\HorbX$ with the Poincar\'e pairing on $H^\bullet(Y;\CC)$.
  \end{itemize}
\end{lemma}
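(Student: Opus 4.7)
The plan is to take the four parts in order; the first three are immediate from the conditions in conjecture~\ref{conj}, while (iv) requires a direct computation with the symplectic form $\Omega_Y$.

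For (i), I use that $\U$ is grading-preserving and $\deg z = 2$. If $\omega \in H^j_{\scriptscriptstyle \text{\rm CR}}(\cX;\CC)$ then every homogeneous summand $U_i(\omega) z^{-i}$ of $\U(\omega)$ must have total degree $j$, so $U_i(\omega)$ has degree $j + 2i$. In particular $U_0$ is grading-preserving. For (ii), condition (a) of conjecture~\ref{conj} reads $\U(\fun_\cX) = \fun_Y + O(z^{-1})$, and since $\U = U_0 + U_1 z^{-1} + \cdots$ the $z^0$-term of the left-hand side is exactly $U_0(\fun_\cX)$, so $U_0(\fun_\cX) = \fun_Y$. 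For (iii), given $\rho \in H^2(\cX;\CC)$ apply condition (b) to $\fun_\cX$:
\[
\U(\rho) = \U\(\rho \CR \fun_\cX\) = \pi^\star \rho \cup \U(\fun_\cX) = \pi^\star \rho \cup \(\fun_Y + O(z^{-1})\) = \pi^\star \rho + O(z^{-1}),
\]
and extracting the $z^0$-term gives $U_0(\rho) = \pi^\star \rho$.

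For (iv), the key step, I use that $\U$ is symplectic, i.e.\ $\Omega_Y(\U f, \U g) = \Omega_\cX(f,g)$ for all $f,g \in \cHX$. I apply this with $f = \phi_\alpha z^{-1}$ and $g = \phi_\beta$. On the $\cX$-side,
\[
\Omega_\cX\(\phi_\alpha z^{-1}, \phi_\beta\) = \Res_{z=0} \(-z^{-1} \phi_\alpha, \phi_\beta\)_\cX dz = -\(\phi_\alpha,\phi_\beta\)_\cX.
\]
On the $Y$-side, since $\U$ is $\CC(\!(z^{-1})\!)$-linear,
\[
\U\(\phi_\alpha z^{-1}\) = \sum_{i=0}^k U_i(\phi_\alpha)\, z^{-i-1}, \qquad \U(\phi_\beta) = \sum_{j=0}^k U_j(\phi_\beta)\, z^{-j}.
\]
Substituting $-z$ for $z$ in the first factor and pairing, I get
\[
\(\U(\phi_\alpha z^{-1})\big|_{z \mapsto -z},\, \U(\phi_\beta)\)_Y = \sum_{i,j} (-1)^{i+1}\(U_i(\phi_\alpha),U_j(\phi_\beta)\)_Y z^{-i-j-1}.
\]
Only the term with $i=j=0$ contributes to $\Res_{z=0}$, yielding $-\(U_0(\phi_\alpha),U_0(\phi_\beta)\)_Y$. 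Equating with the $\cX$-side gives $\(U_0(\phi_\alpha),U_0(\phi_\beta)\)_Y = \(\phi_\alpha,\phi_\beta\)_\cX$, which is (iv).

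The main (though still modest) obstacle is (iv): one has to chase signs through the definition of $\Omega_\cZ$ as a residue of a $(-z)$-twisted pairing, and verify that the hypothesis $\U(\cHX^-) \subset \cHY^-$ together with $\CC(\!(z^{-1})\!)$-linearity really does confine $\U$ to the polynomial-in-$z^{-1}$ form used above so that no positive powers of $z$ appear in $\U(\phi_\alpha z^{-1})$. Parts (i)--(iii) are then formal consequences of the defining properties of $\U$ recorded in conjecture~\ref{conj}.
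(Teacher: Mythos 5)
Your proposal is correct and takes the same route as the paper: the paper's proof is a single line for each part (``(i) $\U$ is grading-preserving. (ii) conjecture~\ref{conj}(a). (iii) conjecture~\ref{conj}(b). (iv) $\U$ is a symplectic isomorphism.''), and you have simply spelled out the calculations those terse citations are pointing at. In particular your residue computation for (iv) — pairing $\phi_\alpha z^{-1}$ against $\phi_\beta$ and observing that only the $i=j=0$ term survives — is exactly how one unpacks ``$\U$ is symplectic.''
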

\begin{proof}
  (i) $\U$ is grading-preserving.  (ii) conjecture~\ref{conj}(a).
  (iii) conjecture~\ref{conj}(b).  (iv)~$\U$ is a symplectic
  isomorphism.
\end{proof}

\section{From Givental's Cone to Quantum Cohomology}
\label{sec:conetoQC}

Since $\cLX$ encodes all genus-zero Gromov--Witten invariants of
$\cX$, it implicitly encodes the big quantum product for $\cX$.  In
the same way, $\cLY$ encodes the big quantum product for $Y$.  In this
section we describe how to determine the quantum products from $\cLX$
and $\cLY$, using the geometric structure described in theorem
\ref{thm:cone}.  The big quantum products can be regarded in three
different ways:
\begin{enumerate}
\item as \emph{families of Frobenius algebras}, since
  \begin{align*}
    \Big(u \QC{\tau} v, w \Big)_\cX = \Big(u, v \QC{\tau} w \Big)_\cX
    && \text{and} &&
    \Big(u' \newQC{t} v', w' \Big)_Y = \Big(u', v' \newQC{t} w' \Big)_Y
  \end{align*}
  for all $u,v,w \in \HorbX$ and $u',v',w' \in \HYNovX$.
\item as \emph{F-manifolds}.  An F-manifold is, roughly speaking, a
  Frobenius manifold without a pairing.  It is a manifold equipped
  with a supercommutative associative multiplication on the tangent
  sheaf and a global unit vector field such that the multiplication
  $\circ$ satisfies
  \begin{equation}
    \label{eq:Fintegrability}
    \Lie_{X \circ Y} (\circ) = X \circ \Lie_Y(\circ) + Y
    \circ\Lie_X(\circ)
  \end{equation}
  for any two local vector fields $X$ and $Y$.  F-manifolds are
  studied in \cite{Hertling--Manin,Hertling}.
\item as \emph{Frobenius manifolds}.  A Frobenius manifold is a
  manifold $M$ equipped with the structure of a unital Frobenius
  algebra on each tangent space $T_x M$ such that the associated
  metric on $TM$ is flat, the identity vector field is flat, and
  certain integrability conditions hold (these include the celebrated
  WDVV~equations).  Frobenius manifolds are studied in \cite{Dubrovin,
    Manin}.
\end{enumerate}
Once again, write $\cZ$ for either $\cX$ or $Y$.  In this section, we
will see how to pass from $\cLZ$ to:
\begin{enumerate}
\item a family of Frobenius algebras.  This family is \emph{intrinsic}
  to $\cLZ$ in that it depends only on the symplectic space $\cHZ$ and
  on $\cLZ \subset \cHZ$ satisfying the conclusions of
  theorem~\ref{thm:cone}; it is independent of the
    polarization $\cHZ = \cHZ^+ \oplus \cHZ^-$ used to define $\cLZ$.
\item an F-manifold.  This depends, up to isomorphism, only on $\cHZ$,
  $\cLZ$, and a choice of point on $\cLZ$.
\item a Frobenius manifold.  This depends on $\cHZ$, $\cLZ$, a point
  $x$ of $\cLZ$, and a choice of \emph{opposite subspace} $\cHZ^\opp
  \subset \cHZ$.  Choosing $x \in \cLZ$ appropriately and taking
  $\cHZ^\opp = \cHZ^-$ gives the Frobenius manifold corresponding to
  the quantum cohomology of $\cZ$; we explain this in
  \S\ref{sec:conetoQC}(d--e) below.
\end{enumerate}

Once we understand points 1--3 here, we will see how
conjecture~\ref{conj} implies previous versions of the Crepant
Resolution Conjecture.  If the symplectic transformation $\U$ maps
$\cHX^-$ to $\cHY^-$ then we obtain from point 3 above an isomorphism
between the Frobenius manifolds defined by the quantum cohomologies of
$\cX$ and $Y$.  The Hard Lefschetz condition postulated by
Bryan--Graber in \cite{Bryan--Graber} implies that $\U(\cHX^-) =
\cHY^-$ (this is theorem~5.4 in \cite{CCIT:crepant1}), and so
conjecture~\ref{conj} implies the Bryan--Graber version of the Crepant
Resolution Conjecture.  This is discussed further in \S\ref{sec:BG}.
In general $\U$ will not map $\cHX^-$ to $\cHY^-$ --- in other words,
some of the matrix entries of $\U$ will contain strictly positive
powers of $z$ --- and so $\U$ will \emph{not} induce an isomorphism
between quantum cohomology Frobenius manifolds.  From point 2 above we
still obtain, however, an isomorphism of F-manifolds.  If $\cX$ is
semi-positive then more is true, and we obtain an isomorphism between
the \emph{small} quantum cohomology algebras of $\cX$ and $Y$ which
preserves the Poincar\'e pairings.  This is something very like Ruan's
original Crepant Resolution Conjecture, and we discuss it further in
\S\ref{sec:Ruan}.  Finally, without any additional assumptions on
$\cX$ or $Y$ (no Hard Lefschetz, no semi-positivity) we obtain from
point 1 above something very like the Cohomological Crepant Resolution
Conjecture; we discuss this in \S\ref{sec:CCRC}.

The ideas presented in this section are due to Barannikov and
Givental.  Closely-related discussions can be found in
\citelist{\cite{Barannikov}\cite{Givental:symplectic}\cite{CCIT:crepant1}}.

\subsection{From Givental's Cone to a Family of Frobenius Algebras}

Given $\cLZ \subset \cHZ$ satisfying the conclusions of
theorem~\ref{thm:cone} and a point $x \in \cLZ$, the quotient $T_x/z
T_x$, where $T_x = T_x \cLZ$, inherits the structure of a Frobenius
algebra as follows.  The $\NovZ$-bilinear form
\begin{align*}
  T_x &\otimes T_x \longrightarrow \NovZ \\
  v & \otimes w \longmapsto \Omega(v, z^{-1} w)
\end{align*}
is symmetric and vanishes whenever $v$ or $w$ lies in $zT_x$, so it
descends to give a symmetric bilinear form
\begin{equation}
  \label{eq:TzTpairing}
  g(v + z T_x, w + z T_x) = \Omega(v,z^{-1} w)
\end{equation}
on $T_x/z T_x$.  This form is non-degenerate as $T_x$ is maximal
isotropic.  Choosing a Lagrangian subspace $V$ such that $\cHZ = T_x
\oplus V$ --- one could, for instance, take $V =\cHZ^-$ --- identifies
$V$ with $T_x^\star := \Hom(T_x,\NovZ)$ and $\cHZ$ with the cotangent
bundle $T_x \oplus T_x^\star$.  As $\cLZ$ is Lagrangian, there is the
germ of a function $\phi:T_x \to \NovZ$ such that $\phi(x) = 0$ and
that $\cLZ$ coincides, in a formal neighbourhood of $x$, with the
graph of the differential of $\phi$.  The third derivative $d^3 \phi
|_x$ defines a cubic tensor on $T_x$; it is easy to see that this is
independent of the choice of $V$.  Theorem~\ref{thm:cone} implies that
$\phi$ vanishes identically along the germ of $z T_x \subset T_x$, and
as $d^3 \phi|_x(u,v,w)$ vanishes whenever one of $u,v,w$ lies in
$zT_x$ we obtain a cubic tensor $c$ on $T_x/zT_x$:
\[
c\big(u + z T_x,v + z T_x,w + z T_x\big) = d^3 \phi|_x(u,v,w).
\]
The tensors $c$ and $g$ together define a supercommutative product
$\star$ on $T_x/zT_x$, via
\[
g\Big((u + z T_x) \star(v + z T_x),w + z T_x\Big) = c\big(u + z T_x,v + z
T_x,w + z T_x\big).
\]
The product $\star$ automatically has the Frobenius property with
respect to $g$.  We will see in the next section that it is
associative and unital; the unit depends upon the point $x \in \cLZ$,
so even if the tangent spaces $T_{x_1} = T_{x_1} \cLZ$ and $T_{x_2} =
T_{x_2} \cLZ$ coincide, the algebra structures on $T_{x_1}/z T_{x_1}$
and $T_{x_2}/z T_{x_2}$ will in general differ.  Thus we have obtained
from $\cLZ$ a vector bundle
\[
T\cLZ / z T \cLZ \to \cLZ
\]
such that the fibers of this vector bundle form a family of Frobenius
algebras.

\begin{rem}
  The construction here resembles the construction of the Yukawa
  coupling in the B-model of topological string theory associated to a
  Calabi--Yau $3$-fold (see \cite{Cox--Katz} and \emph{e.g.}
  \cite{Givental:homological}*{\S6}).  This is not an accident.  The
  tangent spaces $T$ to $\cLZ$ form a \emph{variation of semi-infinite
    Hodge structure} in the sense of Barannikov \cite{Barannikov}, and
  part of the power of Barannikov's theory is that it can describe
  A-model phenomena (like quantum cohomology) and B-model phenomena in
  the same language.
\end{rem}

\begin{rem}
  If we take $\cX$ to be a manifold, $\cZ = \cX$, $V = \cHX^-$, and
  the point $x \in \cLX$ to be $J_\cX(\tau,-z)$, defined in
  \S\ref{sec:conetoQC}(d) below, then the function-germ $\phi$
  described above is Givental's \emph{genus-zero ancestor potential}
  $\bar{\mathcal{F}}_\tau^0$ of $\cX$
  \cite{Givental:quantization}*{\S5}.
\end{rem}

\subsection{From Givental's Cone to an F-Manifold}

Given $\cLZ \subset \cHZ$ satisfying the conclusions of
theorem~\ref{thm:cone} and a point $x \in \cLZ$, we construct an
F-manifold as follows.  Let $T_x = T_x \cLZ$ and choose a Lagrangian
subspace $V \subset \cHZ$ such that $\cHZ = T_x \oplus V$.  Let $M =
T_x \cap z V$.  Our F-manifold will be based on a formal neighbourhood
of the origin in $M$.

As $\cLZ$ is the graph of a germ of a map from $T_x$ to $V$, there is
a unique germ of a function $K:M \to \cHZ$ such that $K(t) \in \cLZ$
and $K(t) = x + t + v(t)$ for some $v(t) \in V$.  Choose a basis
$e_0,\ldots,e_N$ for $M$ and denote the corresponding linear
co-ordinates on $M$ by $t_a$, $0 \leq a \leq N$.

\begin{proposition} \label{pro:TzTbasis}
  For $t$ in a formal neighbourhood of the origin in $M$, the elements
  \begin{align}
    \label{eq:TzTbasis}
    {\partial K \over \partial t_a}(t) + z T_{K(t)}, &&
    a = 0,1,\ldots,N,
  \end{align}
  form a basis for $T_{K(t)}/z T_{K(t)}$.
\end{proposition}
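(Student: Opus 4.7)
The plan is to verify the proposition at the basepoint $t=0$ directly, and then extend to the formal neighborhood by a Nakayama-style argument, using that both $M$ and the target $T_{K(t)}/zT_{K(t)}$ are free $\Lambda$-modules of rank $N+1$.

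First I would compute $\partial K/\partial t_a(0)$. From the defining property $K(t) = x + t + v(t)$ with $v(t) \in V$ and $v(0) = 0$, we have $\partial K/\partial t_a(0) = e_a + (\partial v/\partial t_a)(0)$. Since $K$ takes values in $\cLZ$, the partial derivative $\partial K/\partial t_a(0)$ lies in $T_{K(0)} = T_x$; combined with $e_a \in M \subset T_x$ and $V$ being a complement to $T_x$, this forces $(\partial v/\partial t_a)(0) \in T_x \cap V = 0$, so $\partial K/\partial t_a(0) = e_a$.

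The key algebraic step is the direct sum decomposition $T_x = zT_x \oplus M$. For trivial intersection: $M \cap zT_x \subseteq zV \cap zT_x = z(V \cap T_x) = 0$, using that $M = T_x \cap zV \subseteq zV$, that $z$ is injective on $\cHZ$, and that $\cHZ = T_x \oplus V$. For the span: since $z$ acts invertibly on $\cHZ$, applying $z$ to $\cHZ = T_x \oplus V$ yields $\cHZ = zT_x \oplus zV$; for any $u \in T_x$, decomposing $u = u_1 + u_2$ with $u_1 \in zT_x$ and $u_2 \in zV$, we have $u_1 \in zT_x \subseteq T_x$ by Theorem \ref{thm:cone}(1), so $u_2 = u - u_1 \in T_x \cap zV = M$. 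Together with the basepoint computation, this proves that $\{\partial K/\partial t_a(0) + zT_{K(0)}\} = \{e_a + zT_x\}$ is a $\Lambda$-basis of $T_{K(0)}/zT_{K(0)}$.

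For general $t$ in a formal neighborhood of the origin, the inclusion $zT\cLZ \subseteq T\cLZ$ from Theorem \ref{thm:cone}(1) equips $T\cLZ / zT\cLZ$ with the structure of a rank-$(N+1)$ vector bundle over $\cLZ$ in the formal sense. Choosing a formal trivialization of this bundle in a neighborhood of $x$, the proposed basis elements are represented by an $(N+1) \times (N+1)$ matrix of formal power series in $t$ whose value at $t=0$ is the identity (by the basepoint step) and whose determinant is therefore a unit in the formal power series ring; hence they form a basis for every formal $t$. The main obstacle is setting up the formal trivialization of $T\cLZ/zT\cLZ$ rigorously in this infinite-dimensional setting, but this is a routine consequence of $\cLZ$ being a smooth Lagrangian submanifold-germ with $zT\cLZ$ a genuine subbundle.
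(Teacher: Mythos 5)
Your proof is correct and follows essentially the same route as the paper: compute $\partial K/\partial t_a(0) = e_a$ using $T_x \cap V = 0$, then establish that the projection $M \to T_x/zT_x$ is an isomorphism via the decomposition $\cHZ = zT_x \oplus zV$. The only difference is that the paper simply asserts ``it suffices to prove this at $t=0$,'' while you spell out the reduction to $t=0$ with a Nakayama-style determinant argument; that extra detail is sound, and the trivialization concern you flag is indeed routine once the rank at $t=0$ is pinned down.
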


\begin{proof}
  It suffices to prove this at $t=0$.  But $K(0) = x$ and, since $T_x$
  is tangent to $\cLZ$ at $x$, ${\partial K \over \partial t_a}(0)$
  has no component along $V$: ${\partial K \over \partial t_a}(0) =
  e_a$.  So we need to show that
  \begin{align*}
    e_a + z T_x    && a = 0,1,\ldots,N,
  \end{align*}
  form a basis for $T_x/zT_x$.  This holds because $\cHZ = z T_x
  \oplus z V$, and so the projection $M = T_x \cap z V \to T_x/z T_x$ is
  an isomorphism.
\end{proof}

Thus for $t$ in a formal neighbourhood $M_0$ of the origin in $M$, the
map $DK|_t:T_t M \to T_{K(t)}/z T_{K(t)}$ is an isomorphism.  Pulling
back the Frobenius algebra structure defined in the previous section
via the map $DK$ gives a pairing
\[
g_{\alpha \beta}(t) =
\Omega\bigg(
{\partial K \over \partial t_\alpha}(t),
z^{-1} {\partial K \over \partial t_\beta}(t)
\bigg)
\]
and a symmetric $3$-tensor
\[
c_{\alpha \beta \gamma}(t) =
\Omega\bigg(
{\partial^2 K \over \partial t_\beta \partial t_\gamma}(t),
{\partial K \over \partial t_\alpha}(t)
\bigg)
\]
on $T_t M_0$.  Denote the induced product on $T_t M_0$ by $\circ_t$:
\[
e_\alpha \circ_t e_\beta = c_{\alpha \beta}^{\phantom{\alpha \beta}
  \gamma}(t) e_\gamma
\]
where $c_{\alpha \beta \gamma}(t) = c_{\alpha \beta}^{\phantom{\alpha
    \beta} \epsilon}(t) g_{\epsilon \gamma}(t)$.

\begin{proposition} \ \label{pro:Fprodisassociative}
  \begin{itemize}
  \item[(a)] $\nabla_{u \circ_t v} K(t) + z T_{K(t)} = -z \nabla_u
    \nabla_v K(t) + z T_{K(t)}$, where $\nabla_u = u^\alpha {\partial
      \over \partial t_\alpha}$ denotes the directional derivative
    along $u = u^\alpha e_\alpha$.
  \item[(b)] The tensor $c_{\alpha \beta}^{\phantom{\alpha \beta}
      \epsilon}(t) c_{\epsilon \gamma \delta}(t)$ is symmetric in
    $\alpha$, $\beta$, $\gamma$, $\delta$.
  \item[(c)] The product $\circ_t$ is associative.
  \end{itemize}
\end{proposition}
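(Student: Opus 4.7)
The plan is to reduce all three parts to a single geometric claim about $K$ — namely, that for any smooth section $v(t)$ of $T_{K(t)}\cLZ$ and any direction $\gamma$, the element $z\partial_\gamma v(t)$ again lies in $T_{K(t)}\cLZ$ (call this Claim A) — and then to deduce (a), (b), (c) by the symmetry of mixed partials and a little linear algebra. To prove Claim A I would introduce the auxiliary family $\tilde K(t,s) = K(t) + szv(t)$. Because $v(t) \in T_{K(t)}$ the perturbation $szv(t)$ lies in $zT_{K(t)}$, so theorem~\ref{thm:cone}(2) places $\tilde K(t,s)$ on $\cLZ$ for small $s$, while theorem~\ref{thm:cone}(3) gives $T_{\tilde K(t,s)} = T_{K(t)}$ independently of $s$. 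Hence $\partial_\gamma \tilde K(t,s) \in T_{K(t)}$ for every $s$, and differentiating in $s$ at $s = 0$ yields $z\partial_\gamma v(t) \in T_{K(t)}$. Specializing to $v = \partial_\alpha K$ gives $z\partial_\gamma\partial_\alpha K \in T_{K(t)}$ for all $\alpha, \gamma, t$.

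For (a): granted Claim A, both sides of the stated equality already lie in $T_{K(t)}$, so by non-degeneracy of $g$ on $T_{K(t)}/zT_{K(t)}$ (proposition~\ref{pro:TzTbasis}) it is enough to pair each side against $\partial_\gamma K$ for every $\gamma$. By definition of $\circ_t$, the left side pairs to $u^\alpha v^\beta c_{\alpha\beta\gamma}(t)$. Using the identity $\Omega(zX, Y) = -\Omega(X, zY)$ — immediate from $\Omega(f, g) = \Res_{z=0}(f(-z), g(z))\,dz$ — the right side simplifies to $u^\alpha v^\beta \Omega(\partial_\alpha\partial_\beta K, \partial_\gamma K) = u^\alpha v^\beta c_{\gamma\alpha\beta}(t)$. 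A single differentiation of the Lagrangian relation $\Omega(\partial_\alpha K, \partial_\beta K) = 0$ shows that $c$ is totally symmetric, so the two pairings agree.

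For (b) and (c): rewrite (a) as $z\partial_\alpha\partial_\beta K + c_{\alpha\beta}^\epsilon \partial_\epsilon K = zS_{\alpha\beta}$ with $S_{\alpha\beta}(t) \in T_{K(t)}$. Differentiating in $\gamma$, multiplying by $z$, and substituting (a) again for $z\partial_\gamma\partial_\epsilon K$ produces three stray terms — involving $\partial_\epsilon K$, $S_{\gamma\epsilon}$, and $\partial_\gamma S_{\alpha\beta}$ — all of which lie in $zT_{K(t)}$: the first two trivially, and the third because Claim~A applied to the section $v = S_{\alpha\beta}$ gives $z\partial_\gamma S_{\alpha\beta} \in T_{K(t)}$, hence $z^2\partial_\gamma S_{\alpha\beta} \in zT_{K(t)}$. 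What remains is
\[
z^2 \partial_\gamma \partial_\alpha \partial_\beta K \equiv c_{\alpha\beta}^\epsilon c_{\gamma\epsilon}^\delta \partial_\delta K \pmod{zT_{K(t)}}.
\]
The left-hand side is symmetric in $\alpha, \beta, \gamma$ by commutativity of partials, so the coefficient $c_{\alpha\beta}^\epsilon c_{\gamma\epsilon}^\delta$ inherits that symmetry. Lowering $\delta$ by $g$ and combining with the $(\gamma, \delta)$-symmetry already present in $c_{\epsilon\gamma\delta}$ upgrades this to full $S_4$-symmetry of $c_{\alpha\beta}^\epsilon c_{\epsilon\gamma\delta}$, which is (b). Then (c) is immediate: after lowering, the associator $(u \circ_t v) \circ_t w - u \circ_t (v \circ_t w)$ evaluated against a fourth vector contracts to $u^\alpha v^\beta w^\gamma (c_{\alpha\beta}^\epsilon c_{\epsilon\gamma\sigma} - c_{\beta\gamma}^\delta c_{\delta\alpha\sigma})$, and these are two entries of the same $S_4$-symmetric four-tensor. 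The whole argument rests on Claim~A, where the geometric content of theorem~\ref{thm:cone} enters; once it is in hand, the rest is formal.
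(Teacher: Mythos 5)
Your proof is correct and follows essentially the same path as the paper's: derive the relation $-z\,\nabla_u\nabla_v K \equiv \nabla_{u\circ_t v}K \pmod{zT_{K(t)}}$ from the definition of $\circ_t$ and the non-degenerate pairing on $T_{K(t)}/zT_{K(t)}$, then differentiate once more and reduce mod $zT_{K(t)}$ to get symmetry in $\alpha,\beta,\gamma$. Your Claim~A (that $z\partial_\gamma v(t)\in T_{K(t)}$ for any section $v$ of $K^\star T\cLZ$) is exactly the fact the paper invokes from Theorem~\ref{thm:cone} without proof; you supply a clean argument for it via the deformation $K(t)+szv(t)$, which is a nice addition but not a different route.
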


\begin{proof}
  As $c_{\gamma \beta \alpha}(t) = c_{\alpha \beta}^{\phantom{\alpha
      \beta} \epsilon}(t) g_{\gamma \epsilon}(t)$, we have
  \[
  \Omega\bigg(
  {\partial^2 K \over \partial t_\beta \partial t_\alpha}(t),
  {\partial K \over \partial t_\gamma}(t)
  \bigg)
  =
  \Omega\bigg(
  {\partial K \over \partial t_\gamma}(t),
  z^{-1} c_{\alpha \beta}^{\phantom{\alpha
      \beta} \epsilon}(t)
  {\partial K \over \partial t_\epsilon}(t)
  \bigg).
  \]
  The pairing \eqref{eq:TzTpairing} is non-degenerate, and
  \eqref{eq:TzTbasis} is a basis for $T_{K(t)}/z T_{K(t)}$, so
  \begin{equation}
    \label{eq:Fproduct}
    - z {\partial^2 K \over \partial t_\alpha \partial t_\beta}(t)
    + z T_{K(t)}
    =
    c_{\alpha \beta}^{\phantom{\alpha \beta} \epsilon}(t)
    {\partial K \over \partial t_\epsilon}(t)
    + z T_{K(t)}.
  \end{equation}
  This proves (a).  Theorem~\ref{thm:cone} implies that if $y(t) \in
  T_{K(t)}$ then $z {\partial y \over \partial t_a}(t) \in T_{K(t)}$
  too, so differentiating \eqref{eq:Fproduct} yields
  \begin{align*}
    z^2 {\partial^3 K \over \partial t_\alpha \partial t_\beta
      \partial t_\gamma}(t)
    + z T_{K(t)}
    &=
    - c_{\alpha \beta}^{\phantom{\alpha \beta} \epsilon}(t)
    z {\partial^2 K \over \partial t_\epsilon \partial t_\gamma}(t)
    + z T_{K(t)} \\
    &=
    c_{\alpha \beta}^{\phantom{\alpha \beta} \epsilon}(t)
    c_{\epsilon \gamma}^{\phantom{\epsilon \gamma} \delta}(t)
    {\partial K \over \partial t_\delta}(t)
    + z T_{K(t)}.
  \end{align*}
  Thus $c_{\alpha \beta}^{\phantom{\alpha \beta} \epsilon}(t)
  c_{\epsilon \gamma}^{\phantom{\epsilon \gamma} \delta}(t)$ is
  symmetric in $\alpha$, $\beta$, $\gamma$.  As $c_{\epsilon \gamma
    \delta}(t)$ is symmetric as well, part (b) follows.  Part (c) is
  an immediate consequence of part (b).
\end{proof}

So far, we have constructed a family of supercommutative associative
products on the fibers of $TM_0$, depending on $\cLZ \subset \cHZ$, a
point $x \in \cLZ$, and a Lagrangian subspace $V$.  To prove that this
makes $M_0$ into an F-manifold we need to show that the algebras
$(T_tM_0,\circ_t)$ are unital and that the integrability condition
\eqref{eq:Fintegrability} holds.  After that we will show that, up to
isomorphism, the F-manifold we have constructed is independent of the
choice of Lagrangian subspace $V$.

Define a vector field $e$ on $M_0$ by
\[
\nabla_{e(t)} K(t) + z T_{K(t)} = - z^{-1} K(t) + z T_{K(t)}.
\]
This makes sense, as $z^{-1} K(t) \in T_{K(t)}$ by theorem~\ref{thm:cone}.

\begin{proposition} \label{pro:identity}
  $e(t)$ is the identity element in the algebra $(T_t(M_0),\circ_t)$.
\end{proposition}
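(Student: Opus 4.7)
I want to show $e(t) \circ_t v = v$ for every vector field $v$ on $M_0$; by equation~\eqref{eq:Fproduct} applied with $u = e(t)$, this is equivalent to the congruence
\[
-z\, e(t)^\alpha v^\beta\, \partial_\alpha \partial_\beta K(t) \equiv \nabla_v K(t) \pmod{z T_{K(t)}}.
\]
First I would lift the defining relation of $e(t)$ to an honest equality in $\cHZ$:
\[
e(t)^\alpha \partial_\alpha K(t) \;=\; -z^{-1} K(t) + \xi(t), \qquad \xi(t) \in zT_{K(t)},
\]
where $z^{-1} K(t)$ is a legitimate element of $T_{K(t)}$ because Theorem~\ref{thm:cone}(iii) applied with $x = K(t)$, $T = T_{K(t)}$ forces $K(t) \in zT_{K(t)}$.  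Differentiating this equality along $v$ (treating $e(t)$ as a vector field in $t$) and multiplying by $-z$, the stray term $-z v^\beta (\partial_\beta e(t)^\alpha) \partial_\alpha K(t)$ is a $\CC[z]$-linear combination of the $\partial_\alpha K(t)$'s and hence lies in $zT_{K(t)}$.  Passing to the quotient,
\[
-z\, e(t)^\alpha v^\beta\, \partial_\alpha \partial_\beta K(t) \;\equiv\; \nabla_v K(t) - z \nabla_v \xi(t) \pmod{zT_{K(t)}}.
\]
The proof therefore reduces to the single claim that $\nabla_v \xi(t) \in T_{K(t)}$.

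This last step is the main one, and it is where the cone structure from Theorem~\ref{thm:cone} does the essential work.  Consider the two-parameter family $\Psi(t,s) := K(t) + s\, \xi(t)$.  Since both $K(t)$ and $\xi(t)$ lie in the linear subspace $zT_{K(t)}$, so does $\Psi(t,s)$ for every $s$.  By Theorem~\ref{thm:cone}(ii) the germ at $K(t)$ of the linear subspace $zT_{K(t)}$ is contained in $\cLZ$, so for $s$ in a small neighborhood of $0$ the point $\Psi(t,s)$ actually belongs to $\cLZ$; by Theorem~\ref{thm:cone}(iii) its tangent space is still $T_{K(t)}$.  Thus for each such $s$ the curve $t \mapsto \Psi(t,s)$ runs along $\cLZ$, so its velocity $\nabla_v K(t) + s\,\nabla_v \xi(t)$ lies in $T_{K(t)}$.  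Subtracting $\nabla_v K(t) \in T_{K(t)}$ gives $s \nabla_v \xi(t) \in T_{K(t)}$ for all small $s \neq 0$, and dividing by $s$ yields $\nabla_v \xi(t) \in T_{K(t)}$, as required.

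The hard part will be precisely this cone step: one needs to exploit the fact that $zT_{K(t)}$ is not merely tangent to $\cLZ$ but is embedded as a ``ruling'' of $\cLZ$ near $K(t)$, along which the tangent space is constant.  The auxiliary parameter $s$ is inserted only to guarantee that we stay within the germ where Theorem~\ref{thm:cone}(ii--iii) apply; everything else in the argument is formal differentiation of the defining relation of $e(t)$.
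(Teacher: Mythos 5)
Your proof is correct and follows essentially the same route as the paper: apply Proposition~\ref{pro:Fprodisassociative}(a) with $u=e(t)$, then differentiate the defining relation of $e$ and reduce everything modulo $zT_{K(t)}$. The paper's two-line proof silently relies on the fact (stated inside the proof of Proposition~\ref{pro:Fprodisassociative}(b)) that $y(t)\in T_{K(t)}$ implies $z\,\partial_a y(t)\in T_{K(t)}$, which immediately handles your $\nabla_v\xi$ term since $\xi\in zT_{K(t)}$; you instead re-derive this from scratch via the ruling argument with the auxiliary family $\Psi(t,s)$, which is a clean and slightly more self-contained way to reach the same conclusion.
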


\begin{proof}
  Let $v$ be any vector field on $M_0$.  Then
  \begin{align*}
    \nabla_{e(t) \circ_t v(t)} K(t) + z T_{K(t)} &=
    - z \nabla_v(t) \nabla_{e(t)} K(t) + z T_{K(t)} \\
    &= \nabla_{v(t)} K(t) + z T_{K(t)}
  \end{align*}
  and so $e(t) \circ_t v(t) = v(t)$.
\end{proof}

\begin{cor}
  The product on $T_x / z T_x$ constructed in \S\ref{sec:conetoQC}(a)
  is associative and unital.
\end{cor}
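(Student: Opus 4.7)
The plan is to realize the product $\star$ on $T_x/zT_x$ from subsection~(a) as the $t=0$ specialization of the family of products $\circ_t$ on $T_tM_0$ from subsection~(b); once that identification is in hand, associativity and unitality follow at once from Propositions~\ref{pro:Fprodisassociative}(c) and~\ref{pro:identity}.

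To carry this out I would use the same Lagrangian complement $V$ in both constructions. Then the potential $\phi:T_x\to\Lambda$ of~(a) parametrizes $\cLZ$ near $x$ as $t\mapsto x + (t,\, d\phi|_t)$, and the map $K:M\to\cHZ$ of~(b) is the restriction of this parametrization to $M\subset T_x$. Proposition~\ref{pro:TzTbasis} supplies a canonical isomorphism $DK|_0:T_0M_0\to T_x/zT_x$, and the task is to show that under this isomorphism the pairing and cubic tensor built in~(a) agree with $g_{\alpha\beta}(0)$ and $c_{\alpha\beta\gamma}(0)$. For the pairing the comparison is immediate, as both sides reduce to $\Omega(\,\cdot\,,\,z^{-1}\cdot\,)$ evaluated on representatives. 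For the cubic tensor I would differentiate $K(t)=x+(t,\,d\phi|_t)$ three times at the origin, substitute into
\[
c_{\alpha\beta\gamma}(0) = \Omega\bigl( \partial_\beta \partial_\gamma K(0),\; \partial_\alpha K(0) \bigr),
\]
and unwind the identification $V\cong T_x^\star$ coming from $\Omega$ to recover the components of $d^3\phi|_0$ restricted to $M$; since $\phi$ is constant along $zT_x$, this restriction represents exactly the cubic tensor on $T_x/zT_x$ used in~(a).

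Once the two Frobenius data agree, Proposition~\ref{pro:Fprodisassociative}(c) transports associativity across $DK|_0$ to $\star$, and Proposition~\ref{pro:identity} produces a unit whose image under $DK|_0$ is the class of $-z^{-1}x$ in $T_x/zT_x$ --- consistent with the observation in~(a) that the unit must depend on the basepoint $x$. The one delicate step is translating the one-form $d\phi|_t\in V$ into an element of $T_x^\star$ via $\Omega$, where sign conventions and the single factor of $z$ in the pairing have to be tracked carefully; beyond that the verification is a short calculation and requires nothing about $\cLZ$ that is not already encoded in Theorem~\ref{thm:cone}.
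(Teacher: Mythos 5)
Your proposal is correct and takes essentially the same approach as the paper. The paper's proof is simply ``Set $t=0$ in propositions~\ref{pro:Fprodisassociative}(c) and~\ref{pro:identity}'', which silently relies on the fact that the product $\circ_0$ from \S\ref{sec:conetoQC}(b) is, by construction, the pullback along $DK|_0$ of the product $\star$ from \S\ref{sec:conetoQC}(a); you spell out that identification (matching $K$ with the graph parametrization by $d\phi$, comparing the pairing and cubic tensor) but the underlying argument is the same.
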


\begin{proof}
  Set $t=0$ in propositions~\ref{pro:Fprodisassociative}(c)
  and~\ref{pro:identity}.
\end{proof}

\begin{proposition}
  The triple $(M_0,\circ,e)$ is an F-manifold.
\end{proposition}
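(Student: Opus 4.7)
The only remaining F-manifold axiom is the Hertling--Manin integrability condition \eqref{eq:Fintegrability}: commutativity of $\circ_t$ is immediate from the symmetry of $\partial^2 K/\partial t_\alpha \partial t_\beta$, associativity was proved in Proposition~\ref{pro:Fprodisassociative}(c), and Proposition~\ref{pro:identity} identifies $e$ as a global unit. My plan is to extract \eqref{eq:Fintegrability} directly from \eqref{eq:Fproduct} by differentiating once more and exploiting the symmetry of the third partials of $K$.

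More concretely, \eqref{eq:Fproduct} rewritten in index-free form says that for vector fields $X,Y$ on $M_0$,
$$-z \nabla_X \nabla_Y K \equiv DK(X \circ_t Y) \pmod{z T_{K(t)}},$$
where $\nabla$ is the flat, torsion-free connection coming from the linear coordinates $(t_\alpha)$ on $M$. Applying $\nabla_Z$ and using $\nabla_X\nabla_Y\nabla_Z K = \nabla_Y\nabla_X\nabla_Z K$ (torsion-freeness and commutativity of partials) produces, after also invoking \eqref{eq:Fproduct} to rewrite the second-order terms on the right, an identity asserting that a certain combination of $\nabla c$ and $c \cdot c$ is symmetric in $X\leftrightarrow Y$. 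The associativity identity (Proposition~\ref{pro:Fprodisassociative}(b)) collapses the quadratic piece, leaving a Codazzi-type symmetry of the form $(\nabla_X c)(Y,Z,W) = (\nabla_Y c)(X,Z,W)$; a short contraction with one more copy of $c$ then rewrites this as precisely the coordinate form of the Hertling--Manin axiom for the structure constants $c_{\alpha\beta}^{\phantom{\alpha\beta}\gamma}$.

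The main technical obstacle is that the subspace $zT_{K(t)}$, modulo which \eqref{eq:Fproduct} is stated, varies with $t$, so one cannot naively apply $\nabla_Z$ to both sides of a congruence mod $zT_{K(t)}$. The resolution lies in Theorem~\ref{thm:cone}: because $zT_{K(t)} \subset T_{K(t)}$, differentiating a section of $T\cLZ$ in a direction along $\cLZ$ produces a new section with error contained in $T_{K(t)}$, and such an error is absorbed into $zT_{K(t)}$ after multiplying by $z$. Iterating this observation once controls all the error terms that arise when differentiating \eqref{eq:Fproduct} a second time. Once this bookkeeping is in place, the verification is routine; the conceptual content is entirely in the ruled-cone property of Theorem~\ref{thm:cone}, as emphasised by Givental \cite{Givental:symplectic} and Barannikov \cite{Barannikov}.
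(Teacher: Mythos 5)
The proposal correctly identifies that the only remaining axiom is the Hertling--Manin integrability condition \eqref{eq:Fintegrability}, and correctly identifies the two ingredients one needs: the symmetry of $c_{\alpha\beta}^{\phantom{\alpha\beta}\epsilon}c_{\epsilon\gamma\delta}$ (Proposition~\ref{pro:Fprodisassociative}(b)) and the full symmetry of $\partial_\delta c_{\alpha\beta\gamma}$ in its four indices. The paper also uses exactly these two ingredients and then invokes the argument of Hertling--Manin \cite{Hertling--Manin}*{\S2}. But your route to the second ingredient is where the gap lies.

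You propose to obtain the Codazzi-type symmetry of $\nabla c$ by differentiating \eqref{eq:Fproduct} once more and controlling error terms via Theorem~\ref{thm:cone}. This does not work as stated. Write \eqref{eq:Fproduct} as $-z\nabla_\beta\nabla_\gamma K = c_{\beta\gamma}^{\phantom{\beta\gamma}\epsilon}\nabla_\epsilon K + z w_{\beta\gamma}$ with $w_{\beta\gamma}\in T_{K(t)}$. Differentiating by $\partial_\delta$ produces the term $(\partial_\delta c_{\beta\gamma}^{\phantom{\beta\gamma}\epsilon})\nabla_\epsilon K$, but also the term $z\nabla_\delta w_{\beta\gamma}$, which by Theorem~\ref{thm:cone} lies in $T_{K(t)}$ but \emph{not} a priori in $zT_{K(t)}$. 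The only way to absorb this error into $zT_{K(t)}$ is to multiply the whole identity by $z$, exactly as the paper does in the proof of Proposition~\ref{pro:Fprodisassociative}; but this multiplication also sends $z(\partial_\delta c_{\beta\gamma}^{\phantom{\beta\gamma}\epsilon})\nabla_\epsilon K$ into $zT_{K(t)}$, so the $\nabla c$ term disappears and all that survives is the symmetry of $c\cdot c$ (i.e.\ associativity), which you already know. Thus the differentiation trick recovers Proposition~\ref{pro:Fprodisassociative}(b) but is blind to the Codazzi symmetry.

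The paper's argument for the missing ingredient is different and much more direct: because $\cLZ$ is Lagrangian and $K(t)$ is the graph of the differential of a generating function $\phi:M_0\to\Nov$ over the polarization $\cHZ=T_x\oplus V$, the tensor $c_{\alpha\beta\gamma}(t)$ is literally the third partial derivative of $\phi$, so $\partial_\delta c_{\alpha\beta\gamma}(t)$ is a fourth derivative and hence symmetric in all four indices. This potentiality is not a consequence of the cone property alone; it uses the Lagrangian structure of $\cLZ$ and the symplectic form in an essential way, via the generating function. Your sketch omits this ingredient, and as a result the claimed derivation of the Codazzi symmetry does not go through.
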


\begin{proof}
  It remains only to establish the integrability condition
  \eqref{eq:Fintegrability}, and for this the argument of
  \cite{Hertling--Manin}*{\S2} applies.  The essential ingredients
  there are proposition~\ref{pro:Fprodisassociative}(b) and that the
  quantity ${\partial \over \partial t_\delta} c_{\alpha \beta
    \gamma}(t)$ is symmetric in $\alpha, \beta,\gamma,\delta$: the
  latter assertion holds here as ${\partial \over \partial t_\delta}
  c_{\alpha \beta \gamma}(t)$ is the fourth derivative of a function
  $\phi:M_0 \to \Nov$.
\end{proof}

\begin{proposition}
  Suppose that $\cLZ \subset \cHZ$ satisfies the conclusions of
  theorem~\ref{thm:cone}, that $x \in \cLZ$, that $T_x = T_x \cLZ$,
  and that $V, V' \subset \cHZ$ are Lagrangian subspaces such that
  $T_x \oplus V = T_x \oplus V' = \cHZ$.  Let $(M_0,\circ,e)$ and
  $(M_0',\circ',e')$ be the corresponding F-manifolds, and
  \begin{align*}
    K:M_0 \to \cHZ, &&
    K':M_0' \to \cHZ,
  \end{align*}
  be the corresponding functions (constructed just above
  proposition~\ref{pro:TzTbasis}).  Then there is a unique map $f:M_0
  \to M_0'$ and a unique section $w$ of $K^\star T \cLZ$ (\emph{i.e.}
  a unique choice of $w(t) \in T_{K(t)} \cLZ$) such that
  \begin{align}
    \label{eq:KK'}
    K'(f(t)) = K(t) + z w(t), && \text{for all $t \in M_0$}.
  \end{align}
  The map $f$ gives an isomorphism of F-manifolds between
  $(M_0,\circ,e)$ and $(M_0',\circ',e')$.
\end{proposition}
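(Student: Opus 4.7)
The plan is to construct $f$ and $w$ by exploiting the ruled cone structure of $\cLZ$ (Theorem~\ref{thm:cone}), and then to verify that $f_*$ preserves the F-manifold products by comparing the characterization of $\circ$ given in Proposition~\ref{pro:Fprodisassociative}(a) across the two parametrizations; the unit then takes care of itself.

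For the construction, Theorem~\ref{thm:cone} presents $\cLZ$ near $x$ as a union of rulings $zT_y$, and both $K(M_0)$ and $K'(M_0')$ are transversal to this ruling foliation: by Proposition~\ref{pro:TzTbasis} and its analogue for $K'$, the composition of $DK$ (respectively $DK'$) with the projection $\pi\colon T_{K(t)} \to T_{K(t)}/zT_{K(t)}$ is an isomorphism. Hence for each $t \in M_0$ the ruling through $K(t)$ meets $K'(M_0')$ in exactly one point, and I take $f(t) \in M_0'$ to be the corresponding parameter. Then $K'(f(t)) - K(t) \in zT_{K(t)}$, and since $z$ acts invertibly on $\cHZ$ the element $w(t) = z^{-1}(K'(f(t)) - K(t)) \in T_{K(t)}$ is uniquely determined. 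Smoothness of $f$ and uniqueness of the pair $(f,w)$ follow from the implicit function theorem applied to the defining transversality.

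Differentiating $K'(f(t)) = K(t) + zw(t)$ in a direction $u \in T_tM_0$ gives the key identity
\begin{equation}
\label{eq:keyid}
DK'_{f(t)}(Df_t(u)) = DK_t(u) + z\,\nabla_u w.
\end{equation}
The leaf-matching construction of $f$ forces both sides to represent the same element of $T_{K(t)}/zT_{K(t)}$, so $z\nabla_u w \in zT_{K(t)}$; injectivity of $z$ then gives $\nabla_u w \in T_{K(t)}$. An iteration of this argument, using Proposition~\ref{pro:Fprodisassociative}(a) together with the closure $zT_{K(t)} \subset T_{K(t)}$ of Theorem~\ref{thm:cone}(i), shows that the operator $\xi \mapsto z\nabla_u \xi$ preserves sections of $K^\star T\cLZ$; applying this to $\xi = \nabla_v w$ yields $z\nabla_u\nabla_v w \in T_{K(t)}$, and therefore $z^2\nabla_u\nabla_v w \in zT_{K(t)}$.

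The final step is to match the products. Applying Proposition~\ref{pro:Fprodisassociative}(a) to $K'$ at $f(t)$ gives
\[
\nabla'_{Df_t(u)\circ'_{f(t)} Df_t(v)}\,K'|_{f(t)} \equiv -z\,\nabla'_{Df_t(u)}\nabla'_{Df_t(v)}\,K'|_{f(t)} \pmod{zT_{K(t)}}.
\]
Expanding $\nabla'_{Df_t(u)}\nabla'_{Df_t(v)} K'|_{f(t)}$ by the chain rule applied to $K'\circ f = K + zw$, the right-hand side becomes $-z\nabla_u\nabla_v K - z^2\nabla_u\nabla_v w + z\cdot(\text{chain-rule correction})$, where the correction is the linear combination $u^\gamma v^\delta(\partial_\gamma\partial_\delta f^\alpha)(\partial_\alpha K')|_{f(t)}$ of the vectors $\partial_\alpha K'(f(t)) \in T_{K(t)}$. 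Both $z^2\nabla_u\nabla_v w$ and $z$ times the correction lie in $zT_{K(t)}$, so the right-hand side is $\equiv -z\nabla_u\nabla_v K \equiv \nabla_{u\circ_t v}K(t) \pmod{zT_{K(t)}}$, the last step being Proposition~\ref{pro:Fprodisassociative}(a) applied to $K$. Using \eqref{eq:keyid} to rewrite $\nabla_{u\circ_t v}K(t) \equiv \nabla'_{Df_t(u\circ_t v)}K'|_{f(t)} \pmod{zT_{K(t)}}$, and recalling that $\{\partial_\beta K'(f(t)) + zT_{K(t)}\}$ is a basis for $T_{K(t)}/zT_{K(t)}$ by Proposition~\ref{pro:TzTbasis}, one concludes $Df_t(u\circ_t v) = Df_t(u)\circ'_{f(t)} Df_t(v)$. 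The compatibility $Df_t(e(t)) = e'(f(t))$ follows automatically, since the identity of a unital commutative algebra is uniquely determined by its product. The main technical hurdle throughout is the careful bookkeeping of the ``modulo $zT_{K(t)}$'' congruences, which is made possible by the closure of tangent spaces under multiplication by $z$ from Theorem~\ref{thm:cone}(i).
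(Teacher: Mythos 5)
Your construction of $f$ and $w$ is correct and matches the paper's (both use the ruling structure and the transversality guaranteed by Proposition~\ref{pro:TzTbasis}). However, there is a genuine gap in the product-matching step.

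The claim that ``the leaf-matching construction forces both sides to represent the same element of $T_{K(t)}/zT_{K(t)}$, so $z\nabla_u w \in zT_{K(t)}$'' is false. You know $w(t)\in T_{K(t)}$, but $T_{K(t)}$ varies with $t$; the ordinary derivative of a section of a varying subbundle is not a section of that subbundle. What the cone structure actually gives (the fact used in the proof of Proposition~\ref{pro:Fprodisassociative}) is only the weaker $z\nabla_u w \in T_{K(t)}$, not $\nabla_u w\in T_{K(t)}$. Indeed, writing $w=\nabla_g K+zh$ with $g$ a vector field and $h(t)\in T_{K(t)}$ as in the paper's proof, Proposition~\ref{pro:Fprodisassociative}(a) gives $z\nabla_u w\equiv -\nabla_{u\circ_t g}K\pmod{zT_{K(t)}}$, which is nonzero whenever $g(t)\neq 0$ --- and $g(t)\neq 0$ generically (it only vanishes where $w(t)\in zT_{K(t)}$, e.g.\ at $t=0$). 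Your claim, if true, would force $g\equiv 0$, i.e.\ $K'\circ f$ and $K$ differ by an element of $z^2T_{K(t)}$, which has no reason to hold.

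This error propagates: since $\nabla_v w$ is not a section of $K^\star T\cLZ$, you cannot apply ``$\xi\mapsto z\nabla_u\xi$ preserves sections'' to $\xi=\nabla_v w$; the correct conclusion is only $z^2\nabla_u\nabla_v w\in T_{K(t)}$, not $z^2\nabla_u\nabla_v w\in zT_{K(t)}$. Consequently in your final computation the two terms you drop --- $z^2\nabla_u\nabla_v w$ on one side and $z\nabla_{u\circ_t v}w$ (implicit in passing from $\nabla_{u\circ_t v}K$ to $\nabla'_{Df_t(u\circ_t v)}K'$) on the other --- do \emph{not} lie in $zT_{K(t)}$; mod $zT_{K(t)}$ they equal $\nabla_{u\circ v\circ g}K$ and $-\nabla_{(u\circ v)\circ g}K$ respectively. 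Happily these do cancel (which is why the statement is true), but that cancellation is exactly the content the argument must supply and is what the paper's proof establishes by introducing the vector field $g$. As written, your proof silently discards two terms that are individually nonzero modulo $zT_{K(t)}$, so the key step is not justified. To repair it, retain the decomposition $w=\nabla_g K+zh$, derive the characterization $\nabla'_{f_\star v}K'\equiv \nabla_v K+\nabla_{v\circ g}K\pmod{zT}$ of the pushforward, and verify that the $g$-terms cancel as in the paper.
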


\begin{proof}
  Let $\pi':\cHZ \to T_x$ denote the projection along $V'$, and for $y
  \in \cLZ$ write $T_y = T_y \cLZ$.  Recall that $M_0$, $M_0'$ are
  formal neighbourhoods of the origins in
  \begin{align*}
    M = T_x \cap z V, && M' = T_x \cap z V'
  \end{align*}
  respectively, and that $K(t)$, $K'(t')$ are the unique elements of
  $\cLZ$ of the form
  \begin{align*}
    K(t) = x + t + v(t), &&
    K'(t') = x' + t' + v'(t'),
  \end{align*}
  where $t \in M_0$, $v(t) \in V$, $t' \in M_0'$, and $v'(t') \in V'$.

  We begin by showing that, for all $t \in M_0$, $T_x = \pi'\big(z
  T_{K(t)}\big) \oplus M'$.  It suffices to prove this at $t=0$, and
  since $K(0) = x$ we need to show that $T_x = z T_x \oplus M'$.
  This follows from the fact that the projection $M' \to T_x / z T_x$
  is an isomorphism (\emph{c.f.} the proof of
  proposition~\ref{pro:TzTbasis}).  So $T_x = \pi'\big(z T_{K(t)}\big)
  \oplus M'$ for all $t \in M_0$.

  There is therefore a unique element $w(t) \in T_{K(t)}$ such that
  \[
  \pi' \big[ K(t) + z w(t) \big] \in x + M'.
  \]
  Theorem~\ref{thm:cone} implies that $K(t) + z w(t) \in \cLZ$, and so
  setting
  \[
  f(t) = \pi' \big[ K(t) + z w(t) \big] - x
  \]
  gives a map $f:M_0 \to M_0'$ such that
  \[
  K'(f(t)) = K(t) + z w(t).
  \]
  This shows existence of a map $f:M_0 \to M_0'$ and a section $w$ of
  $K^\star T \cLZ$ satisfying \eqref{eq:KK'}; uniqueness is clear.

  It remains to show that $f$ gives an isomorphism of F-manifolds.
  Note first that $T_{K(t)} = T_{K'(f(t))}$: theorem~\ref{thm:cone}
  implies that $K(t) \in z T_{K(t)}$, so $K'(f(t))$ is also in $z
  T_{K(t)}$, and so $T_{K(t)} = T_{K'(f(t))}$ by
  theorem~\ref{thm:cone} again.  Write $T = T_{K(t)} = T_{K'(f(t))}$.
  Using proposition~\ref{pro:TzTbasis}, we can write $w(t) \in T$
  uniquely in the form
  \begin{equation}
    \label{eq:defofg}
    w(t) = \nabla_{g(t)} K(t) + z h(t)
  \end{equation}
  for some vector field $g$ on $M_0$ and some element $h(t) \in T$.
  Thus for any vector field $v$ on $M_0$,
  \begin{align}
    \nabla_{f_\star v(t)} K'(f(t)) + z T &= \notag
    \nabla_{v(t)} \big( K(t) + z w(t) \big) + z T \\
    &= \notag
    \nabla_{v(t)} K(t) + z \nabla_{v(t)} \nabla_{g(t)} K(t) + z T \\
    &= \label{eq:characterizepushforward}
    \nabla_{v(t)} K(t) + \nabla_{v(t) \circ_t g(t)} K(t) + z T.
  \end{align}
  As the maps $DK|_t:T_t M_0 \to T/zT$ and $DK'|_{f(t)}:T_{f(t)} M'_0
  \to T/zT$ are isomorphisms, equation
  \eqref{eq:characterizepushforward} determines the pushforward
  $f_\star v$.  Differentiating again, along a vector field $w$ on
  $M_0$, gives
  \[
  z \nabla_{f_\star v(t)} \nabla_{f_\star w(t)} K'(f(t)) + z T =
  z \nabla_{v(t)} \nabla_{w(t)} K(t) +
  z \nabla_{w(t)} \nabla_{v(t) \circ_t g(t)} K(t) + z T,
  \]
  and hence
  \[
  \nabla_{(f_\star v(t)) \circ'_{f(t)} (f_\star w(t))} K'(f(t)) + zT =
  \nabla_{v(t) \circ_t w(t)} K(t) +
  \nabla_{v(t) \circ_t w(t) \circ_t g(t)} K(t) + z T.
  \]
  Comparing with \eqref{eq:characterizepushforward}, we find
  \[
  f_\star \Big( v(t) \circ_t w(t) \Big) =
  \Big( f_\star v(t) \Big) \circ'_{f(t)}
  \Big( f_\star w(t) \Big).
  \]
  The map $f$ is certainly invertible (this follows from uniqueness)
  and so $f$ gives an isomorphism of F-manifolds.
\end{proof}

\begin{rem}
  It was pointed out to us by Hiroshi Iritani that the arguments in
  this section show that the moduli space of tangent spaces to $\cLZ$
  carries a canonical F-manifold structure; see
  \cite{CCIT:crepant1}*{\S2.2} for a different point of view on this.
\end{rem}
\subsection{From Givental's Cone to a Frobenius Manifold}

Consider $\cLZ \subset \cHZ$ satisfying the conclusions of
theorem~\ref{thm:cone}, and $x \in \cLZ$.  As before, write $T_x = T_x
\cLZ$.  To construct a Frobenius manifold, we need to choose also an
\emph{opposite subspace} at $x$.

\begin{definition}
  Let $x \in \cLZ$.  A subspace $\cHopp \subset \cHZ$ is called
  \emph{opposite at $x$} or \emph{opposite to $T_x$} if $\cHopp$ is
  Lagrangian, $T_x \oplus \cHopp = \cHZ$, and $z^{-1} \cHopp \subset
  \cHopp$.
\end{definition}

For example, $\cHZ^-$ is opposite at $x$ for all $x \in \cLZ$.  Our
Frobenius manifold will be based on a formal neighbourhood of zero in
$z \cHopp/\cHopp$.

We note the following immediate consequence of oppositeness.

\begin{lemma} \label{lem:transversality}
  If $\cHopp$ is opposite to $T_x$ then the projections
  \begin{equation}
    \label{eq:defofpi}
    \xymatrix{ & z \cHopp \cap T_x \ar[ld] \ar[rd]^\pi & \\
      T_x/z T_x & & z \cHopp/\cHopp}
  \end{equation}
  are both isomorphisms. \qed
\end{lemma}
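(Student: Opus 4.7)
The plan is to show both projections are isomorphisms by a direct linear-algebra argument, exploiting only the three defining properties of an opposite subspace — namely that $T_x\oplus\cHopp=\cHZ$, that $\cHopp$ is Lagrangian (which we actually will not need here), and that $z^{-1}\cHopp\subset\cHopp$, i.e. $\cHopp\subset z\cHopp$ — together with the fact that $zT_x\subset T_x$ from Theorem~\ref{thm:cone}. Since multiplication by $z$ is invertible on $\cHZ=\HorbNovZ\otimes\CC(\!(z^{-1})\!)$, applying $z$ to the direct sum decomposition $\cHZ=T_x\oplus\cHopp$ gives a second direct sum decomposition $\cHZ=zT_x\oplus z\cHopp$, and this is the key auxiliary identity driving both isomorphism claims.

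For the left-hand projection $\pi_{\mathrm{L}}:z\cHopp\cap T_x\to T_x/zT_x$: for injectivity I observe that any element of the kernel lies in $zT_x\cap z\cHopp=z(T_x\cap\cHopp)=0$; for surjectivity, given $v\in T_x$ I decompose $v=v_1+v_2$ according to $\cHZ=zT_x\oplus z\cHopp$, note that $v_2=v-v_1$ automatically lies in $T_x$ (since $v\in T_x$ and $v_1\in zT_x\subset T_x$), hence $v_2\in z\cHopp\cap T_x$, and $v\equiv v_2\pmod{zT_x}$.

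For the right-hand projection $\pi:z\cHopp\cap T_x\to z\cHopp/\cHopp$: injectivity is immediate because the kernel is $T_x\cap\cHopp=0$; for surjectivity, given $v\in z\cHopp$ I decompose $v=v_1+v_2$ with $v_1\in T_x$ and $v_2\in\cHopp$ using $\cHZ=T_x\oplus\cHopp$, then use $\cHopp\subset z\cHopp$ to conclude $v_1=v-v_2\in z\cHopp$, so $v_1\in z\cHopp\cap T_x$ and $v\equiv v_1\pmod{\cHopp}$.

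There is no genuine obstacle here — each direction is a two-line diagram chase — but the one point that must be handled carefully is surjectivity of $\pi_{\mathrm{L}}$, where one has to notice that the decomposition needed is relative to $\cHZ=zT_x\oplus z\cHopp$ rather than the original $\cHZ=T_x\oplus\cHopp$, and that the property $zT_x\subset T_x$ supplied by Theorem~\ref{thm:cone} is what makes the summand $v_2$ land back inside $T_x$.
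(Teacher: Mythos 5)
Your argument is correct and complete, and it is the natural filling-in of a gap the paper leaves implicit (the lemma is stated with an immediate \textnormal{\qedsymbol} and no proof). You correctly identify the two ingredients that are actually doing the work — the direct sum $\cHZ = T_x \oplus \cHopp$, which under the invertible multiplication by $z$ yields $\cHZ = zT_x \oplus z\cHopp$, and the $z$-stability facts $zT_x \subset T_x$ (from Theorem~\ref{thm:cone}) and $\cHopp \subset z\cHopp$ (i.e.\ $z^{-1}\cHopp \subset \cHopp$) — and the diagram chases for injectivity and surjectivity of both projections are all sound. Your side remark that the Lagrangian property of $\cHopp$ plays no role in this particular lemma is also accurate; it is needed elsewhere (to make the resulting pairing nondegenerate), not here.
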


Consider the `slice' $\big(x + z \cHopp\big) \cap \cLZ$.  This is the
germ (at $x$) of a finite-dimensional submanifold of $\cLZ$, and
lemma~\ref{lem:transversality} implies that the map
\begin{equation}
  \label{eq:defofp}
  \begin{aligned}
    p: \big(x + z \cHopp\big) \cap \cLZ & \longrightarrow z
    \cHopp/\cHopp \\
    y & \longmapsto y - x + \cHopp
  \end{aligned}
\end{equation}
has bijective derivative at $x$.  Thus there is a map from the formal
neighbourhood $N_0$ of zero in $z\cHopp/\cHopp$,
\begin{equation}
  \label{eq:defofJ}
  J:N_0 \longrightarrow \big(x + z \cHopp\big) \cap \cLZ
\end{equation}
such that $p \circ J = \id$.  If we identify $N_0$ with a formal
neighbourhood of the origin in $z\cHopp \cap T_x$ via the isomorphism
$\pi$ in \eqref{eq:defofpi}, then
\[
J(t) = x + t + h(t)
\]
for some $h(t) \in \cHopp$, and so $J$ coincides with the map $K$
defined in \S\ref{sec:conetoQC}(b) by taking $V = \cHopp$.

As in \S\ref{sec:conetoQC}(b), the derivative $DJ|_t:T_t N_0 \to
T_{J(t)}/z T_{J(t)}$ is an isomorphism for all $t \in N_0$.  Pick a
basis $e_0,\ldots,e_N$ for $z \cHopp \cap T_x$ and denote the
corresponding linear co-ordinates on $N_0$, produced using
lemma~\ref{lem:transversality}, by $t_a$, $0 \leq a \leq N$.  Pulling
back the Frobenius algebra structure on $T_{J(t)}/z T_{J(t)}$ defined
in \S\ref{sec:conetoQC}(a) along the map $DJ$ gives a pairing
\[
g_{\alpha \beta}(t) =
\Omega\bigg(
{\partial J \over \partial t_\alpha}(t),
z^{-1} {\partial J \over \partial t_\beta}(t)
\bigg)
\]
and a symmetric $3$-tensor
\[
c_{\alpha \beta \gamma}(t) =
\Omega\bigg(
{\partial^2 J \over \partial t_\beta \partial t_\gamma}(t),
{\partial J \over \partial t_\alpha}(t)
\bigg)
\]
on $T_t N_0$.  We again denote the corresponding product on $T_t N_0$
by $\circ_t$ and the identity vector field, constructed in
proposition~\ref{pro:identity}, by $e$.  As before the product
$\circ_t$ can be determined by differentiating $J(t)$, but this time
the relationship between $\circ_t$ and $J(t)$ is more direct:

\begin{proposition} \label{pro:betterQDEs}
  $\nabla_{u \circ_t v} J(t) = - z \nabla_u \nabla_v J(t)$.
\end{proposition}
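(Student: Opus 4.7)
The plan is to refine Proposition~\ref{pro:Fprodisassociative}(a), which, applied with $V = \cHopp$ (so that the map $K$ there coincides with the map $J$ of \eqref{eq:defofJ}), gives only the congruence
\[
\nabla_{u \circ_t v} J(t) + z T_{J(t)} = -z \nabla_u \nabla_v J(t) + z T_{J(t)}.
\]
The point is to promote this congruence modulo $zT_{J(t)}$ to an exact equality by exploiting the condition $z^{-1}\cHopp \subset \cHopp$ built into the notion of opposite subspace; this extra structure was absent from the general F-manifold construction in \S\ref{sec:conetoQC}(b).

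First I would show that both sides of the desired equation already lie in $z\cHopp$. Under the identification of $N_0$ with a formal neighbourhood of the origin in $z\cHopp \cap T_x$ supplied by Lemma~\ref{lem:transversality}, the condition $p \circ J = \id$ translates into
\[
J(t) = x + t + h(t), \qquad t = t_a e_a \in z\cHopp \cap T_x, \qquad h(t) \in \cHopp.
\]
Thus each first partial derivative $\partial_a J(t) = e_a + \partial_a h(t)$ lies in $(z\cHopp \cap T_x) + \cHopp \subset z\cHopp$, so $\nabla_{u \circ_t v} J(t) \in z\cHopp$. For the second derivatives, the linear term $e_a$ drops out and $\partial_a \partial_b J(t) = \partial_a \partial_b h(t) \in \cHopp$; after multiplying by $-z$ we again land in $z\cHopp$.

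Next I would verify that $z\cHopp \cap zT_{J(t)} = \{0\}$. Since multiplication by $z$ on $\cHZ$ is injective, $z\cHopp \cap zT_{J(t)} = z(\cHopp \cap T_{J(t)})$; the direct sum decomposition $\cHopp \oplus T_{J(t)} = \cHZ$, which holds at $t=0$ by definition of oppositeness and persists formally via the transversality argument behind Lemma~\ref{lem:transversality}, forces $\cHopp \cap T_{J(t)} = \{0\}$. Combining this with Proposition~\ref{pro:Fprodisassociative}(a) and the two-sided $z\cHopp$ containment from the previous paragraph yields the claimed exact equality.

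The main subtlety, and really the only new content beyond Proposition~\ref{pro:Fprodisassociative}, is the observation that the choice $V = \cHopp$ promotes the second derivatives of $J$ from merely lying in $z\cHopp$ to lying in $\cHopp$ itself; this one-power-of-$z$ gain is precisely what is needed so that after multiplication by $-z$ the right-hand side sits in $z\cHopp$ rather than in $z^2\cHopp$, where the oppositeness of $\cHopp$ and $T_{J(t)}$ could no longer be brought to bear to kill the ambiguity of Proposition~\ref{pro:Fprodisassociative}(a).
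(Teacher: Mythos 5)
Your argument is correct and follows the paper's proof essentially verbatim: both invoke Proposition~\ref{pro:Fprodisassociative}(a) for the congruence modulo $zT_{J(t)}$, both use the explicit form $J(t) = x + t + h(t)$ with $t \in z\cHopp \cap T_x$ and $h(t) \in \cHopp$ to place the discrepancy in $z\cHopp$, and both conclude via $z\cHopp \cap zT_{J(t)} = \{0\}$. The closing remark about the one-power-of-$z$ gain is a useful gloss but not new content.
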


\begin{proof}
  Proposition~\ref{pro:Fprodisassociative}(a) shows that the quantity
  \begin{equation}
    \label{eq:iszero}
    \nabla_{u \circ_t v} J(t) + z \nabla_u \nabla_v J(t)
  \end{equation}
  lies in $z T_{J(t)}$.  On the other hand $J(t) = x + t + h(t)$,
  where $t \in z\cHopp \cap T_x$ and $h(t) \in \cHopp$, so
  \eqref{eq:iszero} lies in $z\cHopp$.  As $z \cHopp \cap z T_{J(t)} =
  \{0\}$ for all $t \in N_0$, the statement follows.
\end{proof}

\begin{proposition}
  The quadruple $(N_0,\circ,e,g)$ is a Frobenius manifold.  In other
  words:
  \begin{itemize}
  \item[(a)] each tangent space $(T_t N_0, \circ_t)$ is a unital
    supercommutative Frobenius algebra;
  \item[(b)] the metric $g_{\alpha \beta}(t)$ is flat and the
    co-ordinates $t_0,\ldots,t_N$ are flat co-ordinates;
  \item[(c)] the identity vector field $e$ is flat;
  \item[(d)] $c_{\alpha \beta \gamma}(t)$ is the third derivative of
    some function $\phi:N_0 \to \Lambda$.
  \end{itemize}
\end{proposition}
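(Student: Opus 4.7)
Part (a) is immediate from \S\ref{sec:conetoQC}(b): the proofs of Propositions~\ref{pro:Fprodisassociative} and~\ref{pro:identity} go through verbatim here with $V = \cHopp$, yielding unital supercommutative associative algebras on each $T_tN_0$, and the Frobenius property $g(u\circ v, w) = c(u,v,w)$ is built into the definition of $\circ_t$. The real content of the proposition therefore lies in (b)--(d), each of which exploits the opposite condition $z^{-1}\cHopp\subset\cHopp$.

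For part (b) I would write $J(t) = x + t + h(t)$ with $h(t)\in\cHopp$, so that $\partial_\alpha J = e_\alpha + \partial_\alpha h$ with $e_\alpha\in z\cHopp\cap T_x$ (constant) and $\partial_\alpha h\in\cHopp$. The opposite condition forces $z^{-1}\partial_\beta J\in\cHopp$. Lagrangianity of $\cHopp$ then annihilates $\Omega(\partial_\alpha h, z^{-1}\partial_\beta J)$; and the anti-self-adjointness $\Omega(zu,v) = -\Omega(u,zv)$ rewrites $\Omega(e_\alpha, z^{-1}\partial_\beta h)$ as $-\Omega(z^{-1}e_\alpha, \partial_\beta h)$, a pairing of two elements of $\cHopp$, also zero. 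Thus $g_{\alpha\beta}(t) = \Omega(e_\alpha, z^{-1}e_\beta)$ is constant in $t$, so the $t_\alpha$ are flat coordinates.

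For part (c), pair the defining equation $e(t)^\alpha\partial_\alpha J + z^{-1}J\in zT_{J(t)}$ with $z^{-1}\partial_\gamma J$ and use $\Omega(zT, z^{-1}T) = -\Omega(T,T) = 0$ on the right to obtain $e(t)^\alpha g_{\alpha\gamma} = -\Omega(z^{-1}J(t), z^{-1}\partial_\gamma J(t))$. The same Lagrangian/opposite arguments as in (b) collapse the right side to $-\Omega(z^{-1}x, z^{-1}e_\gamma) - \Omega(z^{-1}x, z^{-1}\partial_\gamma h(t))$. The first term is $t$-independent; I would show the second is identically zero by Taylor-expanding in $t$ and applying the induction $\partial^k J(0)\in z^{-(k-1)}T_x$ (proved by iterating Theorem~\ref{thm:cone}(i) via Proposition~\ref{pro:Fprodisassociative}(a), which bounds the $z$-order of successive derivatives). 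Combined with $z^{-1}x\in T_x$ (from $x \in zT_x$ by Theorem~\ref{thm:cone}(iii)) and Lagrangianity of $T_x$, each Taylor coefficient $\Omega(z^{-1}x, z^{-1}\partial^{k}h(0))$ reduces to a pairing within $T_x$, hence vanishes. So $e(t)^\alpha$ is constant in $t$ and $e$ is flat.

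For part (d) I construct $\phi$ as a primitive of the $1$-form $\omega_\alpha(t)\,dt^\alpha := \Omega(h(t), e_\alpha)\,dt^\alpha$ on $N_0$. Closedness $\partial_\beta\omega_\alpha = \partial_\alpha\omega_\beta$ follows from Lagrangianity of $T_{J(t)}\cLZ$: expanding $0 = \Omega(\partial_\alpha J, \partial_\beta J)$ and discarding $\Omega(e_\alpha,e_\beta)$ and $\Omega(\partial_\alpha h,\partial_\beta h)$ (which vanish by Lagrangianity of $T_x$ and $\cHopp$ respectively) gives exactly $\Omega(e_\alpha,\partial_\beta h) + \Omega(\partial_\alpha h, e_\beta) = 0$. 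The formal Poincar\'e lemma then yields $\phi:N_0\to\Lambda$ with $\partial_\alpha\phi = \omega_\alpha$, and differentiating twice gives $c_{\alpha\beta\gamma}(t) = \Omega(\partial_\beta\partial_\gamma h, e_\alpha) = \partial_\alpha\partial_\beta\partial_\gamma\phi$, the $\Omega(\partial_\beta\partial_\gamma h, \partial_\alpha h)$ contribution vanishing by Lagrangianity of $\cHopp$. The main obstacle in the overall argument is the inductive $z$-order estimate for $\partial^k J(0)$ needed in part (c); the rest of the proof follows directly from Lagrangianity together with the opposite condition.
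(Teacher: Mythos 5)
Parts (a), (b), and (d) of your argument are correct, and (d) in particular is a nice explicit version of what the paper waves at: the paper just cites the existence of a generating function for the Lagrangian $\cLZ$ relative to the polarization $T_x \oplus \cHopp$, whereas you build $\phi$ directly as a primitive of the $1$-form $\Omega(h(t),e_\alpha)\,dt^\alpha$ via closedness and the formal Poincar\'e lemma; this adds clarity without changing the substance. Part (b) matches the paper's argument.

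Part (c), however, has a genuine gap. Your reduction leaves you needing $\Omega\big(z^{-1}x,\, z^{-1}\partial_\gamma h(t)\big)$ constant in $t$, and you attempt to kill each Taylor coefficient $\Omega\big(z^{-1}x,\, z^{-1}\partial^k h(0)\big)$, $k\ge 2$, via the estimate $\partial^k J(0)\in z^{-(k-1)}T_x$ together with Lagrangianity of $T_x$. This does not close. Writing $\partial^k h(0)=\partial^k J(0)=z^{-(k-1)}v$ with $v\in T_x$, the coefficient is $\Omega\big(z^{-1}x,\,z^{-k}v\big)=\pm\Omega\big(z^{-k-1}x,\,v\big)$, and since $T_x$ is only closed under multiplication by $z$ (not $z^{-1}$), $z^{-k-1}x\in T_x$ fails for $k\ge 1$: from $x\in zT_x$ we get only $z^{-1}x\in T_x$. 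So this is not a pairing of two elements of $T_x$, and Lagrangianity of $T_x$ gives no vanishing. Indeed $z^{-1}\partial^k h(0)\in\cHopp$ while $z^{-1}x\in T_x$, and the $\Omega$-pairing between the complementary Lagrangians $T_x$ and $\cHopp$ is perfect, so there is no general reason for it to vanish term by term. (The vanishing is true, but only as a consequence of flatness of $e$, which is exactly what you are trying to prove.) The paper's route avoids this entirely: applying Proposition~\ref{pro:betterQDEs} with $u=e(t)$ gives $\nabla_v\big(z\nabla_e J + J\big)=0$ for every $v$, hence $\nabla_e J(t) = -z^{-1}J(t)+C$ for a $t$-independent $C$; then $\nabla_e J(t)+\cHopp = -z^{-1}x+C+\cHopp$ is manifestly constant because $z^{-1}t$ and $z^{-1}h(t)$ land in $\cHopp$ by the opposite condition, which is precisely what flatness of $e$ amounts to in the coordinates $t_a$. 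I would replace your argument for (c) by this one.
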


\begin{proof}
  Part (a) was proved in \S\ref{sec:conetoQC}(b).  Part (d) is
  immediate from the construction of the tensor $c$.  For (b) we have
  \begin{align} \label{eq:partialJ}
    {\partial J \over \partial t_\alpha}(t) = e_\alpha + h_\alpha(t),
    &&
    \text{where $e_\alpha \in z \cHopp$ and $h_\alpha(t) \in \cHopp$,}
  \end{align}
  and so
  \[
  g_{\alpha \beta}(t) =
  \Omega\Big(
  e_\alpha + h_\alpha(t),
  z^{-1} e_\beta + z^{-1} h_\beta(t)
  \Big).
  \]
  As $\cHopp$ is Lagrangian and $z^{-1} \cHopp \subset \cHopp$,
  $g_{\alpha \beta}(t) = \Omega(e_\alpha,e_\beta)$ is independent of
  $t$.  This shows that $g$ is flat, and that $\{t_a\}$ are flat
  co-ordinates.

  For (c) we need to show that $e(t)$ is constant in flat
  co-ordinates.  In view of \eqref{eq:partialJ}, we need to show that
  $\nabla_{e(t)}J(t) + \cHopp$ is constant with respect to $t$.
  Proposition~\ref{pro:betterQDEs} shows that $z \nabla_{e(t)}
  \nabla_{v(t)} J(t) = \nabla_{v(t)} J(t)$ for any vector field $v$ on
  $N_0$, and hence that $\nabla_{e(t)} J(t) = z^{-1} J(t) + C$ for
  some $C$ independent of $t$.  Thus
  \begin{align*}
    \nabla_{e(t)} J(t) + \cHopp &= z^{-1}\big( x + t + h(t) \big) + C
    + \cHopp \\
    &= z^{-1} x + C + \cHopp
  \end{align*}
  is independent of $t$.  This completes the proof.
\end{proof}

\subsection{Example: the Quantum Cohomology of $\cX$}

We now show that if we take $x$ to be the point $\cLX \cap
\big({-z} + \cHX^-\big)$ and set $\cHopp = \cHX^-$, then the Frobenius
manifold constructed in the previous section is the quantum cohomology
Frobenius manifold of $\cX$.  Set $\tau = \tau_\alpha \phi_\alpha$,
and consider the element $J_\cX(\tau,-z)$ of $\cLX$ such that its
projection to $\cHX^+$ along $\cHX^-$ is equal to $-z + \tau$.  We
call $J_\cX(\tau,-z)$ the \emph{$J$-function of $\cX$}.  It is
obtained by substituting $\tau_{0,a} = \tau_a$, $0 \leq a \leq N$;
$\tau_{k,a} = 0$, $0 \leq a \leq N$, $0 < k < \infty$; and
\[
p_{l,b} = \left.\parfrac{\cF^0_\cX}{\tau_{l,b}}\right|_{\btau(z) = \tau} =
\sum_{d \in \NE(\cX)}
\sum_{n \geq 0}
\correlator{\tau,\ldots,\tau,\phi_b \psi^l}^\cX_{0,n+1,d} {U^d \over n!}
\]
into \eqref{eq:Darboux}, via \eqref{eq:dilatonshiftX}.  Thus
\[
J_\cX(\tau,-z) = -z + \tau +
\sum_{d \in \NE(\cX)}
\sum_{n \geq 0}
\sum_{l \geq 0}
\correlator{\tau,\ldots,\tau,\phi_\epsilon \psi^l}^\cX_{0,n+1,d} {U^d
  \phi^\epsilon \over n! (-z)^{l+1}};
\]
we abbreviate this to
\[
J_\cX(\tau,-z) = -z + \tau +
\sum_{d \in \NE(\cX)}
\sum_{n \geq 0}
\correlator{\tau,\ldots,\tau,{\phi_\epsilon \over -z-\psi}}^\cX_{0,n+1,d} {U^d
  \phi^\epsilon \over n!}.
\]
$J_\cX(\tau,-z)$ is an element of $\cLX$ --- a formal power series in
variables $\tau_0, \ldots, \tau_N$ taking values in $\cHX$ --- which
depends analytically on $\tau_1,\ldots,\tau_s$ in the domain $\CC^s$.
We can see this analyticity explicitly:

\begin{proposition} \label{pro:divisorX}
  \begin{multline*}
    J_\cX(\tau,-z) = \re^{- \tau_{\rm two}/z} \times \\
    \Bigg(
    {-z} + \tau_{\rm rest} +
    \sum_{d \in \NE(\cX) }
    \sum_{n \geq 0}
    \correlator{\tau_{\rm rest},\ldots,\tau_{\rm rest},{\phi_\epsilon \over -z-\psi}}^\cX_{0,n+1,d} {U^d
      \re^{d_1 \tau_1} \cdots \re^{d_s \tau_s} \phi^\epsilon \over n!} \Bigg)
  \end{multline*}
  where $\tau_{\rm two}$ and $\tau_{\rm rest}$ are defined in \eqref{eq:tworest}.
\end{proposition}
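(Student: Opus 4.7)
The plan is a direct application of the Divisor Equation for orbifold Gromov--Witten invariants, applied to the $s$ untwisted divisor insertions coming from $\tau_{\rm two} = \tau_1 \phi_1 + \cdots + \tau_s \phi_s$ in the definition of $J_\cX$. First I would split $\tau = \tau_{\rm two} + \tau_{\rm rest}$ in
\[
J_\cX(\tau, -z) = -z + \tau + \sum_{d \in \NE(\cX)} \sum_{n \geq 0} \frac{U^d}{n!} \ccorrelator{\tau, \ldots, \tau, \phi_\epsilon/(-z-\psi)}_{0, n+1, d} \phi^\epsilon,
\]
and expand each correlator by multilinearity, sorting the $n$ insertions by how many ($k$) are $\tau_{\rm two}$. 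This produces an overall combinatorial factor $\binom{n}{k}$.

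Next, for each $\phi_i$ with $1 \leq i \leq s$, the Divisor Equation reads
\begin{multline*}
\ccorrelator{\phi_i, \sigma_1, \ldots, \sigma_m, \phi_\epsilon/(-z-\psi)}_{0, m+2, d} \\
= (\phi_i, d)\,\ccorrelator{\sigma_1, \ldots, \sigma_m, \phi_\epsilon/(-z-\psi)}_{0, m+1, d} + \frac{1}{-z}\,\ccorrelator{\sigma_1, \ldots, \sigma_m, (\phi_i \CR \phi_\epsilon)/(-z-\psi)}_{0, m+1, d},
\end{multline*}
since none of the $\sigma_j$ carry a $\psi$-class, so the would-be cross-terms in which $\phi_i$ hits some $\sigma_j$ vanish. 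Iterating this $k$ times to strip off all $\tau_{\rm two}$ insertions, and using that the $\CR$ product is supercommutative so the order of iteration is immaterial, yields
\[
\ccorrelator{\tau_{\rm two}^k, \tau_{\rm rest}^m, \phi_\epsilon/(-z-\psi)}_{0, m+k+1, d} = \sum_{j=0}^k \binom{k}{j} (\tau_{\rm two}, d)^{k-j}\,\ccorrelator{\tau_{\rm rest}^m, \frac{\tau_{\rm two}^{\CR j} \CR \phi_\epsilon}{(-z)^j (-z-\psi)}}_{0, m+1, d},
\]
where $(\tau_{\rm two}, d) = \sum_{i=1}^{s} d_i \tau_i$ for $d = \sum_i d_i \pi_\star \beta_i$.

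I would then substitute this back into the series for $J_\cX$, set $m = n - k$ and $l = k - j$, and use the identity $\frac{1}{n!}\binom{n}{k}\binom{k}{j} = \frac{1}{m!\,j!\,l!}$ to decouple the triple sum into three independent sums. Summing over $l$ yields $e^{(\tau_{\rm two}, d)} = \prod_i e^{d_i \tau_i}$; summing over $j$ yields the Chen--Ruan exponential $e^{-\tau_{\rm two}/z}$ acting on $\phi_\epsilon$; and the Frobenius identity $(\rho \CR u, v)_\cX = (u, \rho \CR v)_\cX$ (applied to $\sum_\epsilon \ccorrelator{\cdots, \phi_\epsilon/(-z-\psi)} \phi^\epsilon$) allows $e^{-\tau_{\rm two}/z}$ to be pulled outside the correlator as a $\CR$-prefactor. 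The ``naked'' terms $-z + \tau = -z + \tau_{\rm two} + \tau_{\rm rest}$ combine with the $d = 0$ piece of the sum---which, via the elementary evaluation $\int_{\Mbar_{0, n+1}} \psi_{n+1}^{n-2} = 1$, equals $-z\,e^{-\tau/z} + z - \tau$---to produce $-z\,e^{-\tau/z} = e^{-\tau_{\rm two}/z} \CR (-z\,e^{-\tau_{\rm rest}/z})$, matching the $d = 0$ part of the right-hand side.

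The main technical obstacle is the bookkeeping of the unstable $d = 0$, low-$n$ correlators, for which the Divisor Equation in the form above does not literally apply because the underlying moduli space fails to be stable. These contributions must either be extracted separately via the explicit computation on $\Mbar_{0, n+1}$ indicated above, or equivalently handled via a uniform extension of the Divisor Equation built into the dilaton-shift convention, so that a single iteration argument covers all $(d, n)$ without case distinction.
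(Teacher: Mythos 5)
Your proposal is essentially the paper's own proof, which is simply the one-line citation ``this follows easily from the Divisor Equation, as in [CCLT, Lemma~2.5].'' You have worked out the Divisor Equation iteration in full detail: the binomial expansion in $\tau_{\rm two}$ versus $\tau_{\rm rest}$, the iterated Divisor Equation yielding the double sum, the combinatorial identity $\tfrac{1}{n!}\binom{n}{k}\binom{k}{j} = \tfrac{1}{m!\,j!\,l!}$, the resummation into the two exponentials, and the self-adjointness of $\rho \CR \,\cdot\,$ with respect to the orbifold Poincar\'e pairing that lets you pull the $\CR$-exponential $\re^{-\tau_{\rm two}/z}$ outside the $\sum_\epsilon \langle \cdots \rangle \phi^\epsilon$. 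All of this is correct and is exactly the computation the paper is implicitly invoking.

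One small caveat on your handling of the $d=0$ terms: the evaluation $\int_{\Mbar_{0,n+1}} \psi_{n+1}^{n-2}=1$ that you cite is the closed formula for the \emph{manifold} (trivial-isotropy) case. For a genuine orbifold $\cX$ the $d=0$ moduli stack is a union of components indexed by twisted-sector data, and one should not quote this integral directly. A cleaner way to close the argument in the orbifold setting, which also sidesteps the unstable correlators altogether, is to iterate the Divisor Equation on the $d=0$ correlators \emph{only} down to three-pointed invariants $\langle a,b,c\,\psi^l\rangle^\cX_{0,3,0}$: these are stable, $\psi_3=0$ on $\Mbar_{0,3}$, and $\langle a,b,c\rangle^\cX_{0,3,0}=\int_\cX a \CR b \CR c$ is the defining property of the Chen--Ruan product. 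Running the $m=0$ and $m=1$ bookkeeping this way reproduces the $-z\,\re^{-\tau_{\rm two}/z}$ and $\re^{-\tau_{\rm two}/z}\CR\tau_{\rm rest}$ pieces of the right-hand side without ever encountering an unstable moduli space, so the ``uniform extension of the Divisor Equation'' you allude to is in fact unnecessary if you simply stop the iteration one step early.
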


\begin{proof}
  This follows easily from the Divisor Equation, as in \cite{CCLT}*{lemma~2.5}.
\end{proof}

Our Frobenius manifold is based on a formal neighbourhood $N_0(\cX)$
of the origin in $z\cHX^-/\cHX^- \cong \HorbXNovZ$.  Choose a point $x
\in \cLX \cap \big({-z} + z \cHX^- \big)$ and write $x =
-z+\sigma+h_-$ with $\sigma \in \HorbXNovZ$ and $h_- \in \cHX^-$.  Then
the map $p$ defined in \eqref{eq:defofp} satisfies
\[
p \circ J_\cX(\sigma + \tau,-z) = \tau,
\]
and so the map $J$ defined in \eqref{eq:defofJ} is
\[
J(\tau) = J_{\cX}(\sigma + \tau,-z).
\]
The basis $\phi_0,\ldots,\phi_N$ for $\HorbXNovZ$ gives co-ordinates
$\tau_a$, $0 \leq a \leq N$, on $N_0(\cX)$ and these are flat
co-ordinates for the Frobenius manifold:
\begin{align*}
g_{\alpha \beta}(\tau) & =
\Omega\bigg(
{\partial J_{\cX} \over \partial \tau_\alpha}(\tau + \sigma,-z),
z^{-1} {\partial J_{\cX} \over \partial \tau_\beta}(\tau + \sigma,-z)
\bigg) \\
&= \Omega\big(\phi_\alpha + h_\alpha, z^{-1} \phi_\beta + z^{-1}
h_\beta\big) &&
\text{where $h_\alpha, h_\beta \in \cHX^-$} \\
&= \big(\phi_\alpha,\phi_\beta\big)_{\cX}.
\end{align*}
To calculate the structure constants of the product $\circ_\tau$, we
will need
\begin{align*}
  & {\partial J_\cX \over \partial \tau_\alpha}(\sigma + \tau) =
  \phi_\alpha + h_\alpha \\
  & {\partial^2 J_\cX \over \partial \tau_\beta \partial
    \tau_\gamma}(\sigma + \tau)
  =- z^{-1} \sum_{d \in \NE(\cX)} \sum_{n \geq 0}
  \correlator{\phi_\beta,\phi_\gamma,\sigma + \tau,\ldots,\sigma + \tau,\phi_\epsilon}^\cX_{0,n+3,d}
  {U^d \phi^\epsilon \over n!}  \\
  & \qquad \qquad \qquad \qquad + z^{-1} h_{\beta \gamma}
\end{align*}
for some $h_\alpha, h_{\beta \gamma} \in \cHX^-$; this gives
\begin{align*}
c_{\alpha \beta \gamma}(\tau) & =
\Omega\bigg(
{\partial^2 J_\cX \over \partial \tau_\beta \partial
  \tau_\gamma}(\sigma + \tau),
{\partial J_\cX \over \partial \tau_\alpha}(\sigma + \tau)
\bigg) \\
&= \sum_{d \in \NE(\cX)} \sum_{n \geq 0}
\correlator{\phi_\beta,\phi_\gamma,\sigma + \tau,\ldots,\sigma + \tau,\phi_\alpha}^\cX_{0,n+3,d}
\\
&= {\partial^3 F_\cX \over \partial \tau_\alpha \partial \tau_\beta
  \partial \tau_\gamma} (\sigma + \tau).
\end{align*}
Thus the product $\circ_{\tau}$ on the Frobenius manifold is a shifted
version of the big quantum product for $\cX$:
\begin{equation}
  \label{eq:shifted}
  v \circ_{\tau} w = v \QC{\sigma + \tau} w.
\end{equation}
We have proved:
\begin{proposition}
  The Frobenius manifold produced from $\cLX \subset \cHX$ by choosing
  $x = \cLX \cap \big({-z} + \sigma + \cHX^-\big)$, where $\sigma \in
  \HorbXNovZ$, and $\cHopp = \cHX^-$ is the Frobenius manifold
  corresponding to the quantum cohomology of $\cX$ with the product
  `shifted' by $\sigma$.  It has flat metric given by the orbifold
  Poincar\'e pairing $\( \cdot, \cdot \)_\cX$ and product given by the
  shifted big quantum product \eqref{eq:shifted}.  In particular,
  choosing $\sigma = 0$ gives the usual quantum cohomology Frobenius
  manifold for $\cX$. \qed
\end{proposition}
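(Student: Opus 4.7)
The plan is to verify that the ingredients $(g,\circ,e)$ produced by the general construction of \S\ref{sec:conetoQC}(c), applied to the pair $\bigl(x,\cHopp\bigr)=\bigl(x,\cHX^-\bigr)$ with $x=\cLX\cap\bigl(-z+\sigma+\cHX^-\bigr)$, coincide with the data of the quantum cohomology Frobenius manifold of $\cX$ with origin shifted by $\sigma$. Since \S\ref{sec:conetoQC}(c) has already shown that the construction yields \emph{some} Frobenius manifold, it only remains to identify the flat co-ordinates, the flat metric, and the cubic tensor with their quantum-cohomological counterparts.

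First I would identify the map $J:N_0\to\bigl(x+z\cHX^-\bigr)\cap\cLX$ of \eqref{eq:defofJ} with the shifted $J$-function. By definition $J_\cX(\sigma+\tau,-z)$ is the unique element of $\cLX$ whose projection to $\cHX^+$ along $\cHX^-$ equals $-z+\sigma+\tau$; in particular it lies in $x+\tau+\cHX^-\subset x+z\cHX^-$, so it must equal $J(\tau)$. The basis $\phi_0,\ldots,\phi_N$ of $\HorbXNovZ$ therefore supplies flat co-ordinates $\tau_0,\ldots,\tau_N$ on $N_0$.

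Next I would extract the metric and cubic tensor by explicit differentiation of $J_\cX(\sigma+\tau,-z)$. A single $\tau_\alpha$-differentiation gives $\partial J_\cX/\partial\tau_\alpha=\phi_\alpha+h_\alpha$ with $h_\alpha\in\cHX^-$; since $\cHX^-$ is Lagrangian for $\Omega_\cX$ and $z^{-1}\cHX^-\subset\cHX^-$, every cross-term involving $h_\alpha$ or $h_\beta$ drops out of the residue, leaving $g_{\alpha\beta}(\tau)=\(\phi_\alpha,\phi_\beta\)_\cX$. A second differentiation of the defining equations \eqref{eq:defofLZ} of $\cLX$, applied to the shifted $J$-function, produces the expansion of $\partial^2J_\cX/\partial\tau_\beta\partial\tau_\gamma$ modulo $\cHX^-$ in terms of $3$-point Gromov--Witten correlators of $\cX$. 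Pairing against $\partial J_\cX/\partial\tau_\alpha=\phi_\alpha+h_\alpha$ via $\Omega_\cX$ once again annihilates all $\cHX^-$ pieces, yielding
\[
c_{\alpha\beta\gamma}(\tau)=\frac{\partial^3 F_\cX}{\partial\tau_\alpha\,\partial\tau_\beta\,\partial\tau_\gamma}(\sigma+\tau).
\]
Combined with the defining formula \eqref{eq:bigQCX} for the big quantum product, this identifies the product $\circ_\tau$ with the shifted product $v\QC{\sigma+\tau}w$ of \eqref{eq:shifted}, and setting $\sigma=0$ recovers the usual quantum cohomology Frobenius manifold of $\cX$.

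The only genuinely delicate point, though not really hard, is the bookkeeping that the auxiliary remainders $h_\alpha$ and $h_{\beta\gamma}$ arising from differentiating $J_\cX$ actually take values in $\cHX^-$ and hence contribute nothing to the residue pairing. This follows from Givental's description of $\cLX$ as the graph of $d\cF^0_\cX$ over $\cHX^+$ via the dilaton-shifted Darboux co-ordinates \eqref{eq:Darboux}--\eqref{eq:dilatonshiftX}: the $\tau_a$-derivatives act on the $\cHX^+$ side and the $p_{l,b}$-pieces they produce live in $\cHX^-$ by construction. Once this is noted, the computations of $g_{\alpha\beta}$ and $c_{\alpha\beta\gamma}$ are immediate, and the proposition follows.
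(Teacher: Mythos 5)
Your proposal is correct and follows essentially the same route as the paper's own computation: identify $J(\tau)=J_\cX(\sigma+\tau,-z)$ via the uniqueness of the point of $\cLX$ over $-z+\sigma+\tau$, then read off $g_{\alpha\beta}$ and $c_{\alpha\beta\gamma}$ by differentiating $J_\cX$ and observing that the $\cHX^-$-valued remainders drop out of $\Omega_\cX$ because $\cHX^-$ is Lagrangian and closed under $z^{-1}$. The paper does the same thing, just with the intermediate $\partial^2 J_\cX/\partial\tau_\beta\partial\tau_\gamma$ written out explicitly as a sum of three-point correlators.
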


For later use, we note a stronger version of
proposition~\ref{pro:TzTbasis}:

\begin{proposition}
  For all $\tau \in N_0(\cX)$, the elements
  \begin{align*}
    {\partial J_\cX \over
      \partial \tau_a}(\tau,-z) &&a = 0,1,\ldots,N
  \end{align*}
  form a $\Lambda[z]$-basis for $T_{J_\cX(\tau,-z)}$.
\end{proposition}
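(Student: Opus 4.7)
The plan is to upgrade proposition~\ref{pro:TzTbasis} from a statement about $T/zT$ to one about the full tangent space $T := T_{J_\cX(\tau,-z)}\cLX$, by exhibiting $T$ as a free $\Lambda[z]$-module of rank $N+1$ and tracking the proposed basis through an explicit projection.

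First I would show that the projection $\pi_+ : \cHX \to \cHX^+$ along $\cHX^-$ restricts to a $\Lambda[z]$-linear isomorphism $T \to \cHX^+$. Since $\cLX$ is defined in \eqref{eq:defofLZ} as the graph of the differential of $\cF_\cX$ over $\cHX^+$, its tangent space at any point of $\cLX$ is itself a graph over $\cHX^+$; in particular $\pi_+|_T$ is a $\Lambda$-linear bijection. Theorem~\ref{thm:cone} gives $zT \subset T$, and of course $z\cHX^+ \subset \cHX^+$, so $\pi_+|_T$ is $\Lambda[z]$-linear. Because $\cHX^+ = \HorbNovX \otimes \CC[z]$ is a free $\Lambda[z]$-module of rank $N+1$ with basis $\phi_0,\ldots,\phi_N$, it follows that $T$ is also free of rank $N+1$ over $\Lambda[z]$.

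Next I would compute the image of each $\partial J_\cX/\partial \tau_a$ under $\pi_+$. From the explicit expression for $J_\cX(\tau,-z)$ displayed just before proposition~\ref{pro:divisorX}, its $\cHX^+$-component is $-z + \tau$ while all remaining terms lie in $\cHX^-$. Differentiating in $\tau_a$ therefore yields $\pi_+\big(\partial J_\cX/\partial \tau_a\big) = \phi_a$ for $a = 0, 1, \ldots, N$. Since $\pi_+|_T$ is a $\Lambda[z]$-isomorphism onto $\cHX^+$ and sends these $N+1$ tangent vectors to the $\Lambda[z]$-basis $\phi_0, \ldots, \phi_N$ of $\cHX^+$, the elements $\partial J_\cX/\partial \tau_a$ must themselves form a $\Lambda[z]$-basis of $T$, which is the desired conclusion.

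The main point requiring care is that $\pi_+|_T$ really is a $\Lambda[z]$-isomorphism, since $T$, $\cHX^+$, and $\cLX$ all involve formal completions in the $q$-directions (and formally in the Novikov variables); the graph description of $\cLX$ has to be interpreted at the level of formal submanifolds rather than honest analytic ones, and one must verify that the $\Lambda[z]$-module structure on $T$ inherited from $z T \subset T$ agrees with the one pulled back from $\cHX^+$. Once this foundational point is fixed the argument reduces to the bookkeeping above, and in fact also gives an independent conceptual proof of proposition~\ref{pro:TzTbasis} at the same time.
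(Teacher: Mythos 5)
The key step in your argument --- that $\pi_+|_T$ is a $\Lambda[z]$-linear isomorphism --- is false, and the justification offered does not actually prove it. The fact that $zT\subset T$ and $z\cHX^+\subset\cHX^+$ says that both sides are $\Lambda[z]$-submodules, but it does \emph{not} make the restriction of $\pi_+$ to $T$ a $\Lambda[z]$-module map: $\pi_+$ is the projection with kernel $\cHX^-$, and $\cHX^-$ is not stable under multiplication by $z$ (it is stable under $z^{-1}$, not $z$). Concretely, if $v\in T$ has decomposition $v=h_++h_-$ then $zv=zh_++zh_-$, and $zh_-$ generally has a nonzero $\cHX^+$-component (its $z^0$-coefficient), so $\pi_+(zv)\ne z\pi_+(v)$. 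A one-dimensional model makes this explicit: take $\HorbX=\Lambda$ and $T=\Lambda[z]\cdot e^{a/z}$. This $T$ is Lagrangian, transverse to $\cHX^-$, and closed under multiplication by $z$, yet $\pi_+(z\cdot e^{a/z})=z+a\neq z=z\cdot\pi_+(e^{a/z})$. So the ``foundational point'' you flag at the end is not a formal bookkeeping issue at all: the $\Lambda[z]$-module structure on $T$ coming from $zT\subset T$ genuinely \emph{disagrees} with the one pulled back from $\cHX^+$, and no amount of care about completions will reconcile them. In particular you cannot conclude freeness of $T$ over $\Lambda[z]$, nor transport a $\Lambda[z]$-basis, merely from $\pi_+|_T$ being a $\Lambda$-linear bijection.

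The fix is the paper's argument, which shares your starting point ($\pi_+|_T$ is a $\Lambda$-linear bijection, so $T\cap\cHX^-=\{0\}$, and $\partial J_\cX/\partial\tau_a=\phi_a+h^{(a)}_-$) but proves spanning directly by a downward induction on the degree of the polynomial $\cHX^+$-component: given $v\in T$ with $\pi_+(v)=\sum_{j\le k}\sum_a c_{j,a}\phi_a z^j$, subtract $\sum_a c_{k,a}z^k\,\partial J_\cX/\partial\tau_a$ (which lies in $T$ because $zT\subset T$) to strictly lower the degree of the $\cHX^+$-component; the base case is $T\cap\cHX^-=\{0\}$. Linear independence over $\Lambda[z]$ follows by examining the highest power of $z$, again using that the $h^{(a)}_-$ contribute only strictly negative powers and $\{\phi_a\}$ is a $\Lambda$-basis. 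None of this requires $\pi_+|_T$ to be $z$-equivariant.
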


\begin{proof}
  Every element of $T_{J_\cX(\tau,-z)}$ can be uniquely written in the
  form $h_+ + h_-$ for $h_+ \in \cHX^+$, $h_- \in \cHX^-$.  The
  element $h_+$ is a polynomial in $z$.  Since ${\partial J_\cX \over
    \tau_a}(\tau,-z) = \phi_a + h_-'$ for some $h_-' \in \cHX^-$,
  since $\{\phi_a\}$ is a $\Lambda$-basis for $\HorbXNovZ$, and since
  $T_{J_\cX(\tau,-z)}$ is closed under multiplication by $z$, the
  result follows by induction on the degree of $h_+$.
\end{proof}

We will also need to know the behaviour of $J_\cX(\tau,-z)$ as $\tau$
approaches the large radius limit point of $\cX$.

\begin{proposition} \label{pro:LRL}
  Write $\tau = \tau_{\rm two} + \tau_{\rm rest}$, as in
  \eqref{eq:tworest}.  As $\tau$ approaches the large radius limit
  point for $\cX$,
  \[
  \begin{aligned}
    \Real \tau_i &\to -\infty, && 1 \leq i \leq s,\\
    \tau_i &\to 0, && \text{$i=0$ and $s<i\leq N$,}
  \end{aligned}
  \]
  $J_\cX(\tau,-z) \to {-z}\re^{-\tau_{\rm two}/z}$ and the tangent space
  $T_{J_{\cX}(\tau,-z)} \to \re^{-\tau_{\rm two}/z} \cHX^+$.
\end{proposition}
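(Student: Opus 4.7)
The plan is to use the factorized expression for $J_\cX(\tau,-z)$ provided by Proposition~\ref{pro:divisorX} and examine each factor in the large-radius limit. First I would write
\[
J_\cX(\tau,-z) = \re^{-\tau_{\rm two}/z}\bigl({-z} + \tau_{\rm rest} + R(\tau,-z)\bigr),
\]
where $R(\tau,-z)$ is the sum over $d \in \NE(\cX)$ and $n \geq 0$ of correlator terms weighted by $U^d\re^{d_1\tau_1}\cdots\re^{d_s\tau_s}\tau_{\rm rest}^{\otimes n}/n!$. I would then argue that $R(\tau,-z) \to 0$ in the stated limit by splitting the sum according to whether $d=0$ or $d \neq 0$. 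For $d \neq 0$, positivity of the basis $\pi_\star\beta_1,\ldots,\pi_\star\beta_s$ for $H_2(X;\ZZ)_{\rm free}$ forces at least one $d_i > 0$, so the factor $\re^{d_i\tau_i}$ drives the term to zero as $\Real\tau_i \to -\infty$. For $d = 0$, each correlator contributes a factor of $\tau_{\rm rest}^n$, and the stability condition rules out $(n,d) = (0,0)$ and $(1,0)$; so every surviving $d=0$ term has $n \geq 2$ and vanishes as $\tau_{\rm rest} \to 0$. Together these yield $J_\cX(\tau,-z) \to -z\,\re^{-\tau_{\rm two}/z}$.

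For the tangent space, I would invoke the preceding proposition stating that $\bigl\{\partial J_\cX/\partial \tau_a(\tau,-z) : 0 \leq a \leq N\bigr\}$ is a $\Lambda[z]$-basis for $T_{J_\cX(\tau,-z)}$, and compute each partial derivative from the factorized form. For indices $a \in \{0\} \cup \{s+1,\ldots,N\}$, differentiating under $\re^{-\tau_{\rm two}/z}$ gives
\[
\frac{\partial J_\cX}{\partial \tau_a} = \re^{-\tau_{\rm two}/z}\bigl(\phi_a + \text{terms controlled by } R \text{ and its } \tau_a\text{-derivatives}\bigr),
\]
and the correction terms vanish in the limit by the same argument as above. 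For $1 \leq a \leq s$, the chain rule yields
\[
\frac{\partial J_\cX}{\partial \tau_a} = -\frac{\phi_a}{z}\,J_\cX(\tau,-z) + \re^{-\tau_{\rm two}/z}\frac{\partial R}{\partial \tau_a}(\tau,-z),
\]
and since $\partial R/\partial \tau_a$ involves only terms with $d\neq 0$ (the $d=0$ contributions being independent of $\tau_1,\ldots,\tau_s$) it also tends to zero, leaving $\partial J_\cX/\partial\tau_a \to -\phi_a/z \cdot (-z\,\re^{-\tau_{\rm two}/z}) = \phi_a\,\re^{-\tau_{\rm two}/z}$. Thus every basis element converges to $\phi_a\,\re^{-\tau_{\rm two}/z}$, and the $\Lambda[z]$-span of the limits is exactly $\re^{-\tau_{\rm two}/z}\cHX^+$.

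The main subtlety will be making the convergence of the $\Lambda[z]$-span rigorous: the tangent spaces are infinite-dimensional over $\Lambda$ and defined as $\Lambda[z]$-modules, so a naive pointwise limit of a basis does not immediately give a limit of the ambient submodule. I expect this is handled by noting that the map sending the tangent space $T_{J_\cX(\tau,-z)}$ to its reduction $T_{J_\cX(\tau,-z)}/z T_{J_\cX(\tau,-z)}$ is controlled by the residues of $\partial J_\cX/\partial\tau_a$ modulo $z$, and that the closure of $T_{J_\cX(\tau,-z)}$ under multiplication by $\CC[z]$ propagates the limit from the basis to the full module. With this in hand, the convergence of each $\partial J_\cX/\partial\tau_a$ in $\cHX$, combined with the $\CC[z]$-stability guaranteed by Theorem~\ref{thm:cone}, forces the tangent space itself to converge to $\re^{-\tau_{\rm two}/z}\cHX^+$.
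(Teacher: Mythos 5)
Your proof is correct and follows the same route as the paper's: read off the limit of $J_\cX(\tau,-z)$ from the factored form in Proposition~\ref{pro:divisorX}, then use the fact that the tangent space is the $\Lambda[z]$-span of the $\partial J_\cX/\partial\tau_a$ together with the limit $\partial J_\cX/\partial\tau_a \to \phi_a\,\re^{-\tau_{\rm two}/z}$. The extra detail you supply — the $d=0$ vs.~$d\ne 0$ case split, the stability bound $n\geq 2$ at $d=0$, and the explicit chain rule for $1 \leq a \leq s$ — is all sound and merely makes explicit what the paper compresses into ``all terms with $d \neq 0$ and all terms involving $\tau_{\rm rest}$ vanish.'' Your closing worry about passing from pointwise convergence of a $\Lambda[z]$-basis to convergence of the full $\Lambda[z]$-span is not a gap the paper addresses either; the paper simply asserts that the span converges, treating it as an informal statement, and your basis-level computation is exactly the content the paper intends.
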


\begin{proof}
  Look at proposition~\ref{pro:divisorX}.  As $\tau$ approaches the
  large radius limit point, all terms in $J_\cX(\tau,-z)$ with $d \ne
  0$ and all terms involving $\tau_{\rm rest}$ vanish.  Thus
  \begin{align*}
    J_\cX(\tau,-z) \to {-z} \re^{- \tau_{\rm two}/z} && \text{and} &&
    {\partial J_\cX \over \partial \tau_a} (\tau,-z) \to \phi_a \re^{- \tau_{\rm two}/z} .
  \end{align*}
  As $T_{J_\cX(\tau,-z)}$ is the $\Lambda[z]$-span of $\Big\{
  {\partial J_\cX \over \partial \tau_a} (\tau,-z) : 0 \leq a \leq
  N\Big\}$, it follows that
  \[
  T_{J_\cX(\tau,-z)} \to \re^{-\tau_{\rm two}/z} \cHX^+.
  \]
\end{proof}

\subsection{Example: the Modified Quantum Cohomology of $Y$}

We now show that, as one might expect, the Frobenius manifold
constructed from $\cLY \subset \cHY$ by choosing $x \in \cLY \cap
\big( -z + z \cHY^-\big)$ and $\cHopp = \cHY^-$ is the Frobenius
manifold based on the modified big quantum product $\circledast$ for
$Y$.  The argument is very similar to that in the previous section,
but there are some additional complications caused by our having made
the substitution
\begin{equation}
  \label{eq:substagain}
  Q_i =
  \begin{cases}
    U_i & 1 \leq i \leq s \\
    1 & s< i \leq r.
  \end{cases}
\end{equation}

Set $t = t_\alpha \varphi_\alpha$ and let $t_{\rm two}$ and $t_{\rm
  rest}$ be as in \eqref{eq:tworest}.  Consider the element
$J_Y^\circledast(t,-z)$ of $\cLY$ such that its projection to $\cHY^+$
along $\cHY^-$ is equal to $-z + t$.  This is the \emph{modified
  $J$-function} of $Y$.  It is obtained by setting $t_{0,a} = t_a$, $0
\leq a \leq N$; $t_{k,a} = 0$, $0 \leq a \leq N$, $0 < k < \infty$;
and
\[
p_{l,b} = \left.\parfrac{\cF^0_Y}{t_{l,b}}\right|_{\bt(z) = t} =
\sum_{d \in \NE(Y)}
\sum_{n \geq 0}
\correlator{t,\ldots,t,\varphi_b \psi^l}^Y_{0,n+1,d} {Q^d \over n!}
\]
in \eqref{eq:Darboux}, and then
making the substitution \eqref{eq:substagain}.  Before making the
substitution \eqref{eq:substagain} we have
\[
-z + t +
\sum_{d \in \NE(Y)}
\sum_{n \geq 0}
\correlator{t,\ldots,t,{\varphi_\epsilon \over -z-\psi}}^Y_{0,n+1,d} {Q^d
  \varphi^\epsilon \over n!}
\]
and using the Divisor Equation, as in proposition~\ref{pro:divisorX},
we can write this as
\begin{multline*}
\re^{- t_{\rm two}/z}
\Bigg(
{-z} + t_{\rm rest} + \\
\sum_{d \in \NE(Y)}
\sum_{n \geq 0}
\correlator{t_{\rm rest},\ldots,t_{\rm rest},{\varphi_\epsilon \over
    -z-\psi}}^Y_{0,n+1,d}
{Q^d \re^{d_1 t_1} \cdots \re^{d_r t_r} \varphi^\epsilon \over n!}
\Bigg).
\end{multline*}
Thus
\begin{multline*}
  J_Y^\circledast(t,-z) = \re^{- t_{\rm two}/z}
  \Bigg(
  {-z} + t_{\rm rest} + \\
  \sum_{d \in \NE(Y)}
  \sum_{n \geq 0}
  \correlator{t_{\rm rest},\ldots,t_{\rm rest},{\varphi_\epsilon \over
      -z-\psi}}^Y_{0,n+1,d}
  {U_1^{d_1} \cdots U_s^{d_s} \re^{d_1 t_1} \cdots \re^{d_r t_r} \varphi^\epsilon \over n!}
  \Bigg)
\end{multline*}
where $d = d_1 \beta_1 + \cdots + d_r \beta_r$.  The modified
$J$-function $J_Y^\circledast(t,-z)$ is an element of $\cLY$ which
depends formally on the variables $t_0$, $t_{r+1}, t_{r+2},\ldots,
t_N$ and analytically on $t_1,\ldots,t_r$ in the domain
\eqref{eq:strangeregion}.  It is the unique element of $\cLY$ of the
form
\[
\text{$-z + t + h_-(t)$ \quad with $h_-(t) \in \cHY^-$.}
\]

The Frobenius manifold we seek is based on a formal neighbourhood
$N_0(Y)$ of the origin in $z\cHY^-/\cHY^- \cong \HYNovZ$.  Choose a
point $x \in \cLY \cap \big({-z} + z \cHY^- \big)$ and write $x =
-z+s+h'_-$ with $s \in \HYNovZ$ and $h'_- \in \cHY^-$.  Then the map
$p$ defined in \eqref{eq:defofp} satisfies
\[
p \circ J_Y^\circledast(s+t,-z) = t,
\]
and so the map $J$ defined in \eqref{eq:defofJ} is
\[
J(t) = J_Y^\circledast(s+t,-z).
\]
Now, using the co-ordinates $t_0,\ldots, t_N$ given by the basis
$\varphi_0,\ldots,\varphi_N$ for $\HYNovZ$ and arguing exactly as in
\S\ref{sec:conetoQC}(d), we find that the flat metric on $N_0(Y)$ is
given by the Poincar\'e pairing:
\[
g_{\alpha \beta}(t) = \big(\varphi_\alpha,\varphi_\beta\big)_Y
\]
and that the structure constants of the product $\circ_\tau$ are
\[
c_{\alpha \beta \gamma}(t) =
{\partial^3 F_Y^\circledast \over \partial t_\alpha \partial t_\beta
  \partial t_\gamma} (s+t).
\]
Thus the product $\circ_{\tau}$ on the Frobenius manifold $N_0(Y)$ is
a shifted version of the modified big quantum product for $Y$:
\begin{equation}
  \label{eq:shiftedY}
  v \circ_{t} w = v \newQC{s+t} w.
\end{equation}
We have proved:
\begin{proposition} \label{pro:FrobY} The Frobenius manifold produced
  from $\cLY \subset \cHY$ by choosing $x = \cLY \cap \big( {-z} + s +
  \cHY^-\big)$, for some $s \in \HYNovZ$, and $\cHopp = \cHY^-$ is the
  Frobenius manifold corresponding to the modified quantum cohomology
  of $Y$ with the product `shifted' by $s$.  It has flat metric given
  by the Poincar\'e pairing $\( \cdot, \cdot \)_Y$ and product given
  by \eqref{eq:shiftedY}. \qed
\end{proposition}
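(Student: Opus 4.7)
The plan is to mimic the argument in \S\ref{sec:conetoQC}(d) for $\cX$, substituting $Y$ everywhere but being careful about the substitution \eqref{eq:substagain} which was used to pass from $F_Y$ to $F_Y^\circledast$. The strategy has four steps: (i) identify the map $J:N_0(Y)\to(\cLY\cap(-z+s+\cHY^-))$ of \eqref{eq:defofJ} with the shifted modified $J$-function $t\mapsto J_Y^\circledast(s+t,-z)$; (ii) compute the pulled-back pairing $g_{\alpha\beta}(t)$ and show it equals $(\varphi_\alpha,\varphi_\beta)_Y$; (iii) compute the pulled-back cubic tensor $c_{\alpha\beta\gamma}(t)$ and identify it with $\partial^3 F_Y^\circledast/(\partial t_\alpha\partial t_\beta\partial t_\gamma)(s+t)$; (iv) invoke the defining formula for $\newQC{t}$ to conclude.

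For step (i), I would unfold the definitions. By construction, $J_Y^\circledast(t,-z)$ is the unique element of $\cLY$ of the form $-z+t+h_-(t)$ with $h_-(t)\in\cHY^-$ (this characterization was already established in the text just above the proposition). Writing $x=-z+s+h'_-$ with $s\in\HYNovZ$ and $h'_-\in\cHY^-$, the projection $p$ of \eqref{eq:defofp} sends $J_Y^\circledast(s+t,-z)$ to $t$, so the inverse $J$ must be $t\mapsto J_Y^\circledast(s+t,-z)$. Under the isomorphism $N_0(Y)\cong z\cHY^-\cap T_x/\{\cdot\}$, the basis $\varphi_0,\ldots,\varphi_N$ supplies the flat co-ordinates $t_0,\ldots,t_N$.

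For steps (ii) and (iii), I would differentiate $J_Y^\circledast(s+t,-z)$ with respect to $t_\alpha$, keeping track of which terms lie in $\cHY^-$ and which in $\cHY^+$. Since $\partial_\alpha J_Y^\circledast=\varphi_\alpha+(\text{element of }\cHY^-)$ and $\cHY^-$ is Lagrangian and closed under $z^{-1}$, the residue computation for $g_{\alpha\beta}(t)$ collapses to $(\varphi_\alpha,\varphi_\beta)_Y$, exactly as in the $\cX$ case. For $c_{\alpha\beta\gamma}(t)$, differentiating twice produces a $z^{-1}$ factor times an infinite sum of correlators with insertions $\varphi_\alpha,\varphi_\beta,s+t,\ldots,s+t,\varphi^\epsilon$; pairing this with $\partial_\gamma J_Y^\circledast$ against $\Omega$ gives the corresponding sum with insertion $\varphi_\gamma$ in place of $\varphi^\epsilon$. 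After making the substitution \eqref{eq:substagain} this agrees with $\partial^3 F_Y^\circledast/(\partial t_\alpha\partial t_\beta\partial t_\gamma)(s+t)$ by \eqref{eq:modifiedF0Y}, and hence with the structure constants of $\newQC{s+t}$ via the formula for $\newQC{t}$ given after \eqref{eq:newQCYdivisor}. Combined with the flat-metric calculation this yields \eqref{eq:shiftedY}.

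The main obstacle I anticipate is bookkeeping around the substitution $Q_i\mapsto U_i$ or $Q_i\mapsto 1$: one has to ensure that the analytic continuation-then-specialization used to define $F_Y^\circledast$ and $\cF^\circledast_Y$ is compatible with forming the cone $\cLY$, so that $J_Y^\circledast(t,-z)$ genuinely lies in $\cLY$ as a power series with the appropriate analytic dependence in $t_1,\ldots,t_r$. This is guaranteed by Convergence Assumption~\ref{convassum} together with the analyticity statement for $\cF^0_Y$ (to be proved in the appendix), but one must check that derivatives in $t_\alpha$ and the Darboux identification \eqref{eq:Darboux} commute with the substitution \eqref{eq:subst}. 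Once this coherence is in hand, the rest of the proof is a cosmetic modification of the $\cX$-case, and the quoted proposition follows.
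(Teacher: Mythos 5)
Your proposal reproduces the paper's own argument step for step: the paper also identifies $J$ with $t \mapsto J_Y^\circledast(s+t,-z)$ via the characterization of the modified $J$-function as the unique element of $\cLY$ of the form $-z+t+h_-(t)$, then computes $g_{\alpha\beta}$ and $c_{\alpha\beta\gamma}$ by differentiation exactly as in the $\cX$-case, and concludes via the formula $\varphi_\alpha \newQC{t} \varphi_\beta = \partial^3 F_Y^\circledast / (\partial t_\alpha \partial t_\beta \partial t_\gamma) \, \varphi^\gamma$. Your flagged concern about compatibility of the substitution \eqref{eq:subst} with forming $\cLY$ is precisely the ``additional complication'' the paper acknowledges, handled through Convergence Assumption~\ref{convassum} and the appendix analyticity lemma, so the proposal is correct and essentially identical to the paper's proof.
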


\begin{rem} \label{rem:twoproducts} We now explain why condition (c)
  in conjecture~\ref{conj} ensures that there is a neighbourhood of
  the large-radius limit point for $\cX$ in which both the big quantum
  product $\star$ for $\cX$ and the analytic continuation of the
  modified big quantum product $\circledast$ for $Y$ are well-defined.
  Let us write $V_1 \pitchfork V_2$ if and only if $V_1 \oplus V_2 =
  \cHX$, so that condition (c) is the assertion $\cHX^+ \pitchfork
  \U^{-1}(\cHY^-)$.  In \S\ref{sec:conetoQC}(d) we saw that by
  choosing $x \in \cLX$ of the form $x = - z + \sigma + h_-$, where
  $\sigma \in \HorbXNovZ$ and $h_- \in \cHX^-$, and taking opposite
  subspace $\cHopp = \cHX^-$ we obtain a Frobenius manifold with
  product a shifted version of the big quantum product for $\cX$: $v
  \circ_\tau w = v \QC{\sigma + \tau} w$.  Suppose now that
  conjecture~\ref{conj} holds.  In proposition~\ref{pro:FrobY} we saw
  that by choosing $y \in \cLY$ of the form $-z + s + h_-'$, where $s
  \in \HYNovZ$ and $h_-' \in \cHY^-$, and taking opposite subspace
  $\cHopp = \cHY^-$ we obtain a Frobenius manifold with product
  $v \circ_t w = v \newQC{s+t} w$.  The analytic continuation of $\cLY$ chosen
  as part of conjecture~\ref{conj} defines, via
  proposition~\ref{pro:FrobY}, an analytic continuation of the product
  $\newQC{s+t}$.  (Here we analytically continue $\newQC{s+t}$ in $s$;
  the variable $s$ determines and is determined by the basepoint $y =
  -z + s + h_-' \in \cLY$.)\phantom{.}  We can obtain this
  analytically continued product either by choosing $y$ in the
  analytic continuation of $\cLY$ and taking opposite subspace $\cHopp
  = \cHY^-$ or --- and this is equivalent via $y = \U(x)$ --- by
  choosing $x \in \cLX$ and taking opposite subspace $\cHopp =
  \U^{-1}(\cHY^-)$.  For this to give a Frobenius manifold, we need
  $\U(\cHY^-)$ to be opposite to $T_x = T_x \cLX$; in other words we
  need $T_x \pitchfork \U^{-1}(\cHY^-)$.  Let $x = \cLX \cap \big({-z}
  + \sigma + \cHX^-\big)$.  We know from proposition~\ref{pro:LRL}
  that as $\sigma$ approaches the large-radius limit point for $\cX$,
  $T_x \to \re^{-\sigma_{\rm two}/z} \cHX^+$.  But
  \begin{align*}
    \Big(\re^{-\sigma_{\rm two}/z} \cHX^+ \Big)\pitchfork \U^{-1}\big(\cHY^-\big)
    & \iff
    \cHX^+\pitchfork \re^{\sigma_{\rm two}/z} \U^{-1}\big(\cHY^-\big)
    \\
    & \iff
    \cHX^+\pitchfork  \U^{-1} \Big( \re^{\pi^\star \sigma_{\rm
        two}/z} \cHY^-\Big) \\
    & \iff
    \cHX^+\pitchfork  \U^{-1} \big(\cHY^-\big),
  \end{align*}
  and this holds by conjecture~\ref{conj}(c).  Thus for $\sigma$ in a
  neighbourhood of the large-radius limit point for $\cX$, $T_x
  \pitchfork \U^{-1}(\cHY^-)$ and so both the Frobenius manifold
  defined by the big quantum product for $\cX$ (basepoint = $x \in
  \cLX$, $\cHopp = \cHX^-$) and the Frobenius manifold defined by the
  analytic continuation of the modified big quantum product for $Y$
  (basepoint = $x$, $\cHopp = \U^{-1}(\cHY^-)$) are well-defined.
\end{rem}
\section{A Version of the Cohomological Crepant Resolution Conjecture}
\label{sec:CCRC}

The Cohomological Crepant Resolution Conjecture
\cite{Ruan:firstconjecture} describes a relationship between the
Chen--Ruan cohomology ring of $\cX$ and the small quantum cohomology
ring of the crepant resolution $Y$.  Conjecture~\ref{conj} implies
such a relationship, as we now explain.  The family of Frobenius
algebras constructed in \S\ref{sec:conetoQC}(a) depends only on the
submanifold-germ $\cLZ$ and the symplectic space $\cHZ$. The
transformation $\U$ from conjecture~\ref{conj}, which is a
$\CC(\!(z)\!)$-linear symplectic isomorphism and satisfies $\U(\cLZ) =
\cLY$, therefore induces an isomorphism between the families of
Frobenius algebras
\begin{align*}
  T \cLX / z T \cLX \to \cLX
  && \text{and} &&
  T \cLY / z T \cLY \to \cLY
\end{align*}
By choosing $x \in \cLX$ appropriately --- by taking $x = \cLX \cap
\big( {-z} + \sigma + \cHX^- \big)$ and letting $\sigma$ approach the
large-radius limit point for $\cX$ --- we can obtain the Chen--Ruan
cohomology of $\cX$ as the Frobenius algebra $T_x/z T_x$.  Let $y \in
\cLY$ be such that $y = \U(x)$, and let $T_y$ denote the tangent space
$T_y \cLY$.  Then $\U$ induces an isomorphism of Frobenius algebras
$T_x/z T_x \cong T_y / z T_y$, and this expresses the Chen--Ruan
cohomology ring of $\cX$ in terms of the quantum cohomology of $Y$.

Let $\sigma \in H^2(\cX;\CC)$ and let $x = \cLX \cap \big({-z} +
\sigma + \cHX^-\big)$.  Then $T_x/z T_x$ is isomorphic as a Frobenius
algebra to the quantum cohomology of $\cX$,
$\big(\HorbXNovZ,\QC{\sigma}\,\big)$.  As $\sigma$ approaches the
large-radius limit point for $\cX$, therefore, $T_x/zT_x$ approaches
the Chen--Ruan cohomology ring $\big(\HorbXNovZ,\CR\big)$ --- see the
discussion below equation~\ref{eq:bigQCXdivisor}.  Let $y = \U(x)$.

\begin{proposition}
  As $\sigma$ approaches the large-radius limit point for $\cX$
  \[
  y \to J_Y(\pi^\star \sigma + c, -z),
  \]
  where $\U(\fun_\cX) = \fun_Y - c z^{-1} + O(z^{-2})$.
\end{proposition}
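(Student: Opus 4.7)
The plan is to compute the asymptotic form of $x$ via proposition~\ref{pro:LRL}, push it through $\U$ using the intertwining in conjecture~\ref{conj}(b), and identify the result with $J_Y(\pi^\star\sigma+c,-z)$ by matching $\cHY^+$-projections along $\cHY^-$.

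Since $x = J_\cX(\sigma,-z)$ and $\sigma\in H^2(\cX;\CC)$ satisfies $\sigma_{\rm two}=\sigma$, $\sigma_{\rm rest}=0$ in the notation of \eqref{eq:tworest}, proposition~\ref{pro:LRL} yields $x \to -z\,\re^{-\sigma/z}\fun_\cX$ as $\sigma$ approaches the large-radius limit point of $\cX$, where $\re^{-\sigma/z} = \sum_{n\ge 0}(-\sigma)^{\CR n}/(n!\,z^n)$. By the $\CC(\!(z^{-1})\!)$-linearity of $\U$ and iteration of conjecture~\ref{conj}(b) applied to $\fun_\cX$, we have $\U(\sigma^{\CR n}\CR\fun_\cX) = (\pi^\star\sigma)^n\cup\U(\fun_\cX)$ for every $n\ge 0$; summing the exponential series gives
\[
y = \U(x) \to -z\,\re^{-\pi^\star\sigma/z}\,\U(\fun_\cX) = -z\,\re^{-\pi^\star\sigma/z}\bigl(\fun_Y - c\,z^{-1} + R\bigr),
\]
where $R \in z^{-2}\,H^\bullet(Y;\Lambda)[\![z^{-1}]\!] \subset \cHY^-$.

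Both this limit and $J_Y(\pi^\star\sigma+c,-z)$ lie on the analytic continuation of $\cLY$ --- the former because $\U(\cLX)=\cLY$ and $x\in\cLX$, the latter by definition --- so it suffices to check that their $\cHY^+$-projections along $\cHY^-$ agree. Expanding the display above and collecting non-negative powers of $z$: the term $-z\,\re^{-\pi^\star\sigma/z}\fun_Y$ contributes $-z + \pi^\star\sigma$; the term $-z\,\re^{-\pi^\star\sigma/z}(-c\,z^{-1}) = \re^{-\pi^\star\sigma/z}\,c$ contributes $c$; and $-z\,\re^{-\pi^\star\sigma/z}R$ lies entirely in $\cHY^-$, since $\cHY^-$ is closed under multiplication by elements of $H^\bullet(Y;\Lambda)[\![z^{-1}]\!]$. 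The projection is therefore $-z + \pi^\star\sigma + c$, and the defining property of $J_Y$ (the $Y$-analog of the characterization in \S\ref{sec:conetoQC}(d)) forces the limit to equal $J_Y(\pi^\star\sigma+c,-z)$. The main subtlety is justifying the asymptotic intertwining in the displayed equation: one must argue that the formal $z^{-1}$-series $\re^{-\sigma/z}$, whose coefficients depend on the parameter $\sigma$ going to infinity, may be pushed through $\U$ termwise. This follows from the $\CC(\!(z^{-1})\!)$-linearity of $\U$ together with the algebraic identity of conjecture~\ref{conj}(b), after which the identification of the limit as a $J$-function is essentially bookkeeping about the first two Laurent coefficients of $\U(\fun_\cX)$.
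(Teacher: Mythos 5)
Your proof is correct and follows essentially the same route as the paper: apply proposition~\ref{pro:LRL} to get $x \to -z\,\re^{-\sigma/z}$, push through $\U$ via conjecture~\ref{conj}(b), and identify the limit with $J_Y(\pi^\star\sigma+c,-z)$ by matching $\cHY^+$-projections. (Your sign $\re^{-\pi^\star\sigma/z}$ is the correct one; the paper's display writes $\re^{\pi^\star\sigma/z}$ in an intermediate line, evidently a typo, but its final conclusion $-z+\pi^\star\sigma+c+h_-$ agrees with yours.)
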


\begin{proof}
  We have $x = J_\cX(\sigma,-z)$ so, by proposition~\ref{pro:LRL}, $x
  \to {-z} \re^{- \sigma/z}$ as $\sigma$ approaches the large-radius
  limit point for $\cX$.  Thus
  \begin{align*}
    y & \to \U\big({-z}\re^{-\sigma/z}\big) \\
      &= {-z} \re^{\pi^\star \sigma/z} \U(\fun_\cX) && \text{by
        conjecture~\ref{conj}(b)} \\
      &= -z + \pi^\star \sigma + c + h_- && \text{for some $h_- \in \cHX^-$.}
  \end{align*}
  There is a unique point on $\cLY$ of the form $-z + \pi^\star \sigma
  + c + h_-$, $h_- \in \cHX^-$, and that is $J_Y(\pi^\star \sigma +
  c,-z)$.  Thus as $\sigma$ approaches the large-radius limit point
  for $\cX$, $y \to J_Y(\pi^\star \sigma + c,-z)$.
\end{proof}

It follows that as $\sigma$ approaches the large-radius limit point
for $\cX$,
\begin{equation}
  \label{eq:LRLX}
  \begin{aligned}
    \Real \sigma_i &\to -\infty, && 1 \leq i \leq s,
  \end{aligned}
\end{equation}
the Frobenius algebra $T_y/z T_y$ approaches the quantum cohomology
algebra
\begin{equation}
  \label{eq:CCRClimit}
  \lim_{\substack{\Real \sigma_i \to -\infty,\\ 1 \leq i \leq s}} \Big(
  \HYNovZ, \newQC{ \pi^\star \sigma + c}
  \Big).
\end{equation}
By assumption $\U$ is grading preserving and so $c \in H^2(Y;\CC)$;
let us write $c = c_1 \varphi_1 + \ldots + c_r \varphi_r$.  Note that
there is analytic continuation hidden in \eqref{eq:CCRClimit}: if $t =
t_1 \varphi_1 + \ldots + t_r \varphi_r \in H^2(Y;\CC)$ then the
product $\newQC{t}$ is defined as a power series
\eqref{eq:newQCYdivisor} which converges only when $|\re^{t_i}|<R_i$,
$s < i \leq r$.  In general $t = \pi^\star \sigma + c$ will be outside
this domain of convergence.  But the analytic continuation of $\cLY$
defines, via proposition~\ref{pro:FrobY}, an analytic continuation of
the product $\newQC{t}$ and it is this analytically-continued product
which we use in \eqref{eq:CCRClimit}.  We compute the limit
\eqref{eq:CCRClimit} as follows.  From \eqref{eq:newQCYdivisor} we
have
\[
\varphi_\alpha \newQC{t} \varphi_\beta =
\sum_{\substack{d \in \NE(Y): \\
      d = d_1 \beta_1 + \cdots + d_r \beta_r}}
\correlator{\varphi_\alpha,\varphi_\beta,\varphi^\epsilon}^Y_{0,3,d}
U_1^{d_1} \cdots U_s^{d_s}
\re^{d_1 t_1} \cdots \re^{d_r t_r} \varphi_\epsilon
\]
whenever $|\re^{t_i}|<R_i$ for $s<i\leq r$; taking the limit $\Real
t_i \to - \infty$, $1 \leq i \leq s$, gives
\begin{equation}
  \label{eq:CCRCintermediate}
  \varphi_\alpha \newQC{t} \varphi_\beta \to
  \sum_{\substack{d \in \ker \pi_\star: \\
      d = d_{s+1} \beta_{s+1} + \cdots + d_r \beta_r}}
  \correlator{\varphi_\alpha,\varphi_\beta,\varphi^\epsilon}^Y_{0,3,d}
  \re^{d_{s+1} t_{s+1}} \cdots \re^{d_r t_r} \varphi_\epsilon.
\end{equation}
We can obtain the algebra \eqref{eq:CCRClimit} which we seek from
\eqref{eq:CCRCintermediate} by analytic continuation in
$t_{s+1},\ldots,t_r$ followed by the substitution $t_i = c_i$, $s < i
\leq r$.  This proves:
\begin{theorem} \label{thm:CCRC}
  If conjecture~\ref{conj} holds then the Chen--Ruan product $\CR$ on
  $\HorbX$ can be obtained from the small quantum product
  \eqref{eq:smallQC} for $Y$ by analytic continuation in the quantum
  parameters $Q_{s+1},\ldots,Q_r$ (if necessary) followed by the
  substitution
  \begin{equation}
    \label{eq:CCRCsubst}
    Q_i =
    \begin{cases}
      0 & 1 \leq i \leq s \\
      \re^{c_i} & s < i \leq r.
    \end{cases}
  \end{equation}

\end{theorem}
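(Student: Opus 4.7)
The plan is to transport the intrinsic family of Frobenius algebras built in \S\ref{sec:conetoQC}(a) through the isomorphism $\U$ and then track both sides under the large-radius limit for $\cX$. Because the construction of the fiberwise Frobenius algebra $T\cLZ/zT\cLZ \to \cLZ$ uses only the symplectic form $\Omega_\cZ$ and the geometry of $\cLZ$, and because $\U$ is a $\CC(\!(z^{-1})\!)$-linear symplectic isomorphism carrying $\cLX$ to $\cLY$, it induces a fiberwise isomorphism of Frobenius algebras $T_x\cLX/zT_x\cLX \cong T_{\U(x)}\cLY/zT_{\U(x)}\cLY$ for every $x \in \cLX$. I would take $x = J_\cX(\sigma,-z) = \cLX \cap \big({-z} + \sigma + \cHX^-\big)$ with $\sigma \in H^2(\cX;\CC)$; by \S\ref{sec:conetoQC}(d), the left-hand side is then the quantum cohomology algebra $(\HorbXNovZ, \QC{\sigma})$, which by \eqref{eq:bigQCXdivisor} tends to the Chen--Ruan algebra $\big(\HorbXNovZ, \CR\big)$ as $\sigma$ approaches the large-radius limit point \eqref{eq:LRLX}.

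The next step is to identify the image $y = \U(x)$ in the same limit. By proposition~\ref{pro:LRL}, $x \to -z\,\re^{-\sigma/z}$; conjecture~\ref{conj}(b) lets me commute $\re^{-\sigma/z}$ through $\U$ to produce $\re^{\pi^\star \sigma/z}$; and conjecture~\ref{conj}(a) lets me write $\U(\fun_\cX) = \fun_Y - c z^{-1} + O(z^{-2})$ with $c \in H^2(Y;\CC)$. Expanding $-z\,\re^{\pi^\star \sigma/z} \U(\fun_\cX)$ produces a point of $\cLY$ of the form $-z + \pi^\star \sigma + c + h_-$ with $h_- \in \cHY^-$. Because such a point of $\cLY$ is unique, $y \to J_Y(\pi^\star \sigma + c, -z)$, and by proposition~\ref{pro:FrobY} the corresponding Frobenius algebra $T_y/zT_y$ is $\big(\HYNovZ, \newQC{\pi^\star \sigma + c}\big)$; the analytic continuation of $\cLY$ built into conjecture~\ref{conj} provides the analytic continuation of $\newQC{\,\cdot\,}$ needed for this evaluation to make sense.

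Finally I would extract the algebraic conclusion from \eqref{eq:newQCYdivisor}. In that formula the product depends on the degree-two parameters only through $U_i \re^{t_i}$ for $1 \leq i \leq s$ and $\re^{t_i}$ for $s<i\leq r$. Substituting $t = \pi^\star \sigma + c$ and taking $\Real \sigma_i \to -\infty$ for $1 \leq i \leq s$ kills every term with some $d_i > 0$, $i \leq s$, cutting the sum down to $d \in \ker \pi_\star$; the remaining exponentials $\re^{t_i}$, $s < i \leq r$, evaluate to $\re^{c_i}$. Comparing with the definition \eqref{eq:smallQC} of the small quantum product of $Y$, this is precisely the substitution \eqref{eq:CCRCsubst}, possibly after analytically continuing in $Q_{s+1},\ldots,Q_r$.

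The main obstacle is ensuring that the passage to the limit and the analytic continuation are compatible, i.e.\ that the modified big quantum product for $Y$ really can be continued to, and remains well-defined at, the family of basepoints $x = J_\cX(\sigma,-z)$ as $\sigma \to$ large-radius. This is exactly where conjecture~\ref{conj}(c) enters: as explained in remark~\ref{rem:twoproducts}, transversality $\cHX^+ \pitchfork \U^{-1}(\cHY^-)$ is equivalent, after using proposition~\ref{pro:LRL} and the relation $\re^{\sigma_{\text{two}}/z}\U^{-1} = \U^{-1}\re^{\pi^\star \sigma_{\text{two}}/z}$, to the transversality of $T_x\cLX$ with $\U^{-1}(\cHY^-)$ throughout a whole neighbourhood of the large-radius limit for $\cX$. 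This makes the continued Frobenius manifold structure on the $Y$-side well-defined along the entire path to the limit, and so the limit algebra I computed is legitimately $(\HorbXNovZ, \CR)$.
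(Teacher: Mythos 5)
Your proposal reproduces the paper's own argument step for step: transport the intrinsic Frobenius-algebra family along the symplectic isomorphism $\U$, take the basepoint $x = J_\cX(\sigma,-z)$, use proposition~\ref{pro:LRL} together with conjecture~\ref{conj}(a)--(b) to identify the limiting basepoint on the $Y$-side as $J_Y(\pi^\star\sigma + c,-z)$, and then read off the product from \eqref{eq:newQCYdivisor}. Your closing paragraph on conjecture~\ref{conj}(c) makes explicit the role of remark~\ref{rem:twoproducts} in justifying the analytic continuation, which the paper invokes implicitly; otherwise this is the same proof.
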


The small quantum cohomology with quantum parameters $Q_i$ specialized
like this is known as \emph{quantum corrected cohomology}
\cite{Ruan:firstconjecture}. In Ruan's original Cohomological Crepant
Resolution Conjecture, the exceptional $Q_i$ were specialized to
$-1$. Calculations by Perroni \cite{Perroni} and
Bryan--Graber--Pandharipande \cite{Bryan--Graber--Pandharipande} have
shown that we must relax this, allowing the exceptional $Q_i$ to be
specialized to other roots of unity.  Here, we allow arbitrary choice.
It should be noted that the specialization $Q_i = \re^{c_i} =
\re^{\langle c, \beta_i \rangle }$ is independent of our choice of
bases (see \S\ref{sec:gerbes} for more on this).

\section{A Version of Ruan's Conjecture}
\label{sec:Ruan}

Ruan's original Crepant Resolution Conjecture (implicit in
\cite{Ruan:firstconjecture}), as modified in light of the calculations
of Perroni and Bryan--Graber--Pandharipande, was that the small
quantum cohomology algebra of the crepant resolution $Y$ becomes
isomorphic to the small quantum cohomology algebra of $\cX$ after
analytic continuation in the quantum parameters $Q_{s+1},\ldots,Q_r$
followed by a change-of-variables
\begin{equation}
  \label{eq:wrongcofv}
  Q_i =
  \begin{cases}
    \omega_i U_i & 1 \leq i \leq s \\
    \omega_i & s < i \leq r
  \end{cases}
\end{equation}
where the $\omega_i$ are roots of unity.  Conjecture~\ref{conj}
implies something very like this, at least when $\cX$ is
semi-positive, as we now explain.

\begin{definition}
  A K\"ahler orbifold $\cX$ is called \emph{semi-positive} if and only
  if there does not exist $d \in \NE(\cX)$ such that
  \[
  3 - \dim_\CC \cX \leq c_1(T\cX)\cdot d < 0.
  \]
\end{definition}

\noindent All K\"ahler orbifolds of complex dimension $3$ or less are
semi-positive, as are all Fano and Calabi--Yau orbifolds.
Semi-positive Gorenstein orbifolds $\cX$ have the property that if
$c_1(T\cX) \cdot d < 0$ then all genus-zero Gromov---Witten invariants
in degree $d$ vanish:

\begin{proposition}
  Suppose that $\cX$ is a semi-positive Gorenstein K\"ahler orbifold and
  that $\correlator{\delta_1 \psi^{a_1},\ldots,\delta_n
    \psi^{a_n}}^\cX_{0,n,d} \ne 0$.  Then $c_1(T\cX)\cdot d \geq 0$.
\end{proposition}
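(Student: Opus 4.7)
The plan is to combine the virtual dimension formula for the orbifold Gromov--Witten moduli space with the String Equation, contradicting any hypothetical non-vanishing invariant of negative Chern number by induction on $n$.

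By linearity I may assume each $\delta_i$ is homogeneous and supported on a single twisted sector $\cX_{g_i}$, with $\delta_i \in H^{2k_i}(\cX_{g_i})$, so the Chen--Ruan degree is $|\delta_i|_{\text{CR}} = 2k_i + 2\age(g_i)$. The virtual dimension of $\Mbar_{0,n}(\cX, d; \vec g)$, the moduli space of genus-zero $n$-pointed stable orbifold maps of degree $d$ with prescribed monodromies $\vec g=(g_1,\ldots,g_n)$, is
\[
\dim_\CC \cX - 3 + n + c_1(T\cX)\cdot d - \sum_{i=1}^n \age(g_i),
\]
while the integrand $\prod_i \ev_i^\star(\delta_i)\psi_i^{a_i}$ has complex degree $\sum_i (k_i + a_i)$. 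Matching the two yields
\[
\sum_{i=1}^n \bigl(\tfrac12|\delta_i|_{\text{CR}} + a_i\bigr) = \dim_\CC \cX - 3 + n + c_1(T\cX)\cdot d
\]
as a necessary condition for the invariant to be non-zero. Assume now that $c_1(T\cX)\cdot d < 0$. Since $\cX$ is Gorenstein, $c_1(T\cX)\cdot d$ is an integer, and semi-positivity therefore forces $c_1(T\cX)\cdot d \le 2 - \dim_\CC \cX$. Substituting gives
\[
\sum_{i=1}^n \bigl(\tfrac12|\delta_i|_{\text{CR}} + a_i\bigr) \le n - 1.
\]
Each summand is non-negative, so at least one index $i_0$ must satisfy $|\delta_{i_0}|_{\text{CR}} = 0$ and $a_{i_0} = 0$, i.e.\ $\delta_{i_0}$ is a scalar multiple of $\fun_\cX$ and carries no descendant class.

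I would then induct on $n$ using the orbifold String Equation of \cite{AGV:2}, which for $d \ne 0$ and any $n \ge 0$ reads
\[
\correlator{\fun_\cX,\delta_1\psi^{a_1},\ldots,\delta_n\psi^{a_n}}^\cX_{0,n+1,d}
= \sum_{j:\,a_j \ge 1}\correlator{\delta_1\psi^{a_1},\ldots,\delta_j\psi^{a_j - 1},\ldots,\delta_n\psi^{a_n}}^\cX_{0,n,d}.
\]
Extracting the $\fun_\cX$ insertion identified above, each summand on the right has both its number of marked points and its value of $\sum_i(\tfrac12|\delta_i|_{\text{CR}} + a_i)$ decreased by one (the removed insertion contributed zero to the sum, while one $a_j$ is shifted down by one), so the inequality $\sum \le n-1$ is preserved and the inductive hypothesis applies to kill every summand. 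The base case $n=1$ reads $\correlator{\fun_\cX}^\cX_{0,1,d}$, which the same String Equation (with no remaining insertions, so empty index set on the right) evaluates to zero. This closes the induction and contradicts the assumed non-vanishing.

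The main burden of the argument is keeping the age and CR-degree bookkeeping consistent so that the degree-matching condition collapses to the clean non-negative quantities $V_i = \tfrac12|\delta_i|_{\text{CR}} + a_i$; once this is in place, the inductive step is formal. Gorenstein integrality of $c_1(T\cX)\cdot d$ is what upgrades the strict inequality from semi-positivity to the non-strict bound $\le 2 - \dim_\CC \cX$ and is essential for the argument to close. Validity of the String Equation for $\Mbar_{0,n}(\cX, d)$ for all $n \ge 0$ when $d \ne 0$ is standard in the orbifold setting, but should be cited carefully since the low-$n$ cases of the forgetful map on curves are delicate and rely on the degree-$d$ map to rigidify stability.
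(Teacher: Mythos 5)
Your approach (degree matching plus induction on $n$ via the String Equation) is genuinely different from the paper's (forgetful pushforward plus a dimension count), and the degree bookkeeping and the Gorenstein integrality of $c_1(T\cX)\cdot d$ are correct, but there is a gap in the inductive step. From $\sum_i V_i \le n-1$ and $V_i \ge 0$ you can only conclude that some $V_{i_0} < 1$; the further conclusion $V_{i_0} = 0$ would require each $V_i = \tfrac12|\delta_i|_{\mathrm{CR}} + a_i$ to be a non-negative \emph{integer}, and this fails when $|\delta_{i_0}|_{\mathrm{CR}}$ is odd. Concretely, $V_{i_0} = \tfrac12$ occurs for $\delta_{i_0} \in H^1(\cX;\CC)$ with $a_{i_0}=0$ (such a class is necessarily untwisted, since Gorenstein and trivial generic isotropy force $\age(g)\ge 1$ on twisted sectors, hence CR degree $\ge 2$ there). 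For such an insertion the String Equation does not apply and your induction stalls; one can have $\sum V_i \le n-1$ with no insertion equal to $\fun_\cX$. Your argument is therefore complete only under the additional hypothesis $H^1(\cX;\CC)=0$, which the proposition does not assume.

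The paper sidesteps this by forgetting \emph{all} untwisted marked points at once: it pushes the GW class forward along $\pi:\cX_{0,n,d}\to\cX_{0,n',d}$ (where $n'$ is the number of twisted insertions), so the untwisted classes and all $\psi$-classes are absorbed into $\pi_\star(\cdots)$, which merely contributes non-negative degree. Gorenstein then enters only through the bound $\deg\delta_k\ge 1$ on the twisted classes, and the dimension count $n' + \dim_\CC\cX - 3 + c_1\cdot d \ge n'$ gives the contradiction directly, with no need for integrality of $c_1\cdot d$ or for any recursion. If you wanted to keep your framework, the fix is to first eliminate all untwisted insertions via this forgetful pushforward before running the degree count; but at that point the argument essentially becomes the paper's. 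It is also worth noting that your use of Gorenstein (integrality of $c_1\cdot d$, sharpening semi-positivity to $c_1\cdot d \le 2-\dim_\CC\cX$) is different from the paper's use (integer ages, so twisted classes have CR complex degree $\ge 1$); each is essential to the respective argument.
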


\begin{proof}
  Suppose not, so that $c_1(T\cX)\cdot d < 0$.  Without loss of
  generality we may assume that the marked points $1,2,\ldots,n'$
  carry classes $\delta_i$ from the twisted sectors and that the
  remaining marked points carry untwisted classes.  Let
  $\pi:\cX_{0,n,d} \to \cX_{0,n',d}$ be the map induced by forgetting
  all the untwisted marked points.  Then $\correlator{\delta_1 \psi^{a_1},\ldots,\delta_n
    \psi^{a_n}}^\cX_{0,n,d}$ is the degree-zero part of
  \begin{equation}
    \label{eq:doesntvanish}
    \big[ \cX_{0,n',d} \big]^{\rm vir} \cap \bigg(\prod_{k=1}^{n'}
    \ev_k^\star \delta_k \bigg)
    \cup \pi_\star \bigg(\prod_{k=n'+1}^n
    \ev_k^\star \delta_k \cup \prod_{k=1}^n \psi_k^{a_k} \bigg).
  \end{equation}
  As $\cX$ is Gorenstein, we know that $\deg \delta_k \geq 1$, $1 \leq
  k \leq n'$, where $\deg$ denotes the age-shifted degree on $\HorbX$.
  The non-vanishing of \eqref{eq:doesntvanish} therefore implies that
  the virtual (complex) dimension of $\cX_{0,n',d}$ is at least $n'$,
  and so
  \[
  n' + \dim_\CC \cX - 3 + c_1(T\cX) \cdot d \geq n'.
  \]
  It follows that
  \[
  3 - \dim_\CC \cX \leq c_1(T\cX) \cdot d < 0,
  \]
  which contradicts semi-positivity.  The proposition is proved.
\end{proof}

The small quantum cohomology of $\cX$ is the Frobenius algebra $\big(
\HorbXNovZ, \QC{\tau} \, \big)$ at $\tau = 0$.  This is the Frobenius
algebra $T_x/z T_x$ where $x = \cLX \cap \big({-z} + \cHX^-\big)$ and
$T_x = T_x \cLX$.  Let $y = \U(x)$ and $T_y = T_y \cLY$.  The map $\U$
induces an isomorphism between the Frobenius algebras $T_x/z T_x$ and
$T_y/z T_y$, and this isomorphism expresses the small quantum
cohomology of $\cX$ in terms of the quantum cohomology of $Y$.  To see
that it relates the small quantum cohomology of $\cX$ to the
\emph{small} quantum cohomology of $Y$, we need to calculate $y$.

\begin{proposition} \label{pro:RuanCRC} Suppose that $\cX$ is
  semi-positive and that conjecture~\ref{conj} holds.  Let $x = \cLX
  \cap \big({-z} + \cHX^-\big)$, and define $c \in H^2(Y;\CC)$ by
  $\U(\fun_\cX) = \fun_Y - c z^{-1} + O(z^{-2})$.  Then there is a
  unique element $f \in H^2(Y;\CC) \otimes \NovZ$, 
  \begin{align*}
    f = f_1 \varphi_1 + \cdots + f_r \varphi_r
    && \text{for some $f_1,\ldots,f_r \in \NovZ$,}
  \end{align*}
  such that $\U(x) = J_Y(c + f,-z)$.  Furthermore, the class $f$ is
  exceptional: $\pi_! f = 0$.
\end{proposition}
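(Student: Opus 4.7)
The plan is to compute the $\cHY^+$-projection (along $\cHY^-$) of $\U(x)$ explicitly, show that it equals $-z + (c+f)$ for an $f$ of the desired type, and then invoke the characterization of $J_Y$ as the unique point of $\cLY$ whose projection to $\cHY^+$ has the form $-z + t$. First I would decompose $x = J_\cX(0,-z) = -z + \tilde x$, where
\[
  \tilde x = \sum_{d \in \NE(\cX),\, d \ne 0}\sum_{l \ge 0}
    \correlator{\phi_\epsilon \psi^l}^\cX_{0,1,d}
    U^d \phi^\epsilon (-z)^{-l-1} \in \cHX^-,
\]
and use $\CC(\!(z^{-1})\!)$-linearity of $\U$ to split $\U(x) = -z\,\U(\fun_\cX) + \U(\tilde x)$. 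The definition of $c$ gives $-z\,\U(\fun_\cX) \in -z\fun_Y + c + \cHY^-$, so the contribution of the first summand to $P_+(\U(x))$ is exactly $-z + c$.

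The core of the argument is to control $P_+(\U(\tilde x))$. Set $k = c_1(T\cX)\cdot d$; the semi-positivity hypothesis together with the preceding proposition forces $k \ge 0$ for every nonvanishing summand. The dimension formula (with the usual age shift) forces $\deg_{\rm CR}\phi^\epsilon = 4 - 2k + 2l$, and the grading-preservation of $\U$ then yields $\U(\phi^\epsilon) = \sum_j b_j z^j$ with $b_j \in H^{4 - 2k + 2l - 2j}(Y;\CC)$, so $b_j = 0$ unless $j \le 2 - k + l$. Multiplication by $(-z)^{-l-1}$ gives powers $z^{j - l - 1}$, which land in $\cHY^+$ only for $j \ge l+1$, compatible with $j \le 2 - k + l$ only when $k \in \{0,1\}$.

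For $k = 1$ the unique surviving index is $j = l+1$, contributing a term in $H^0(Y;\CC)$; since $\phi^\epsilon = \omega_{2+2l}$ with $l + 1 \ge 1$, Lemma~\ref{lem:lowdegree}(a) kills it. For $k = 0$ two indices survive: $j = l+2$ yields a term in $z \cdot H^0(Y;\CC)$, killed again by Lemma~\ref{lem:lowdegree}(a) (now with $r = l+2 \ge 2$), while $j = l+1$ yields a term in $H^2(Y;\CC)$ which, by Lemma~\ref{lem:lowdegree}(c) with $r = l+1 \ge 1$, must lie in $\ker\pi_!$. Summing over $d$ and $l$, we obtain $P_+(\U(\tilde x)) = f$ for some $f \in H^2(Y;\CC)\otimes\NovZ$ with $\pi_! f = 0$. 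Combining with the first summand, $P_+(\U(x)) = -z + c + f$, so $\U(x) = J_Y(c+f,-z)$; the uniqueness of $f$ is immediate from the uniqueness of the $\cHY^+$-projection.

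The main obstacle is the degree bookkeeping: a priori $\U$ can send negative powers of $z$ to positive ones, so one expects unwanted contributions to $P_+(\U(\tilde x))$ in $H^0(Y;\CC)$, in $z\cdot H^0(Y;\CC)$, and in $H^2(Y;\CC) \setminus \ker \pi_!$. What closes the argument is the sharpened form of Lemma~\ref{lem:lowdegree}, whose proof relies on the injectivity of $\pi^\star$, Hard Lefschetz on $H^\bullet(\cX;\CC)$, and parts (a) and (b) of conjecture~\ref{conj}. Once the lemma is in hand, the semi-positivity hypothesis does just enough work to restrict to $k \ge 0$, and the degree constraints then mechanically isolate the single allowed contribution, in $H^2(Y;\CC) \cap \ker\pi_!$.
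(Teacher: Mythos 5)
Your proof is correct and is essentially the paper's argument: you decompose $x = -z + \tilde{x}$, use that $\U$ is degree-preserving together with semi-positivity to constrain the degree of $\phi^\epsilon$, and invoke Lemma~\ref{lem:lowdegree} (a), (c) to show that only an exceptional $H^2(Y;\CC)$-contribution can survive projection to $\cHY^+$. The paper phrases the bookkeeping slightly differently --- it assigns $\deg U^d$, $\deg z$, $\deg \phi_\epsilon$ so that each summand of $x$ is homogeneous of total degree two, then bounds $\deg(\phi_\epsilon z^{-k-1})\leq 2$ --- but this is the same power-counting you carry out explicitly.
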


\begin{proof}
  Uniqueness is obvious.  For existence, we need to find $f \in
  H^2(Y;\CC) \otimes \NovZ$ such that
  \begin{equation}
    \label{eq:Ruanseek}
    \U(x) = -z + c + f +    h_-
  \end{equation}
  for some $h_- \in \cHY^-$.  We have $x = J_\cX(0,-z)$, so
  \begin{equation}
    x     = {-z} + \sum_{\substack{d \in \NE(\cX):\\d \ne 0}} \  \sum_{k \geq 0}
    (-1)^{k+1} \correlator{\phi^\epsilon \psi^k}^\cX_{0,1,d} U^d
    \, \phi_\epsilon z^{-k-1}.    \label{eq:Ruanx}
  \end{equation}
  If we set $\deg U^d = c_1(T\cX) \cdot d$, $\deg z = 2$, and give the
  Chen--Ruan class $\phi_\epsilon$ its age-shifted degree then $x \in
  \cHX$ is homogeneous of degree two.  As $\cX$ is semi-positive, any
  monomial $U^d$ which occurs in \eqref{eq:Ruanx} has non-negative
  degree, and so each term $\phi_\epsilon z^{-k-1}$ in
  \eqref{eq:Ruanx} has degree at most two.  If $\phi_\epsilon
  z^{-k-1}$ is of negative degree then $\U\big(\phi_\epsilon
  z^{-k-1}\big)$ is also of negative degree and so
  $\U\big(\phi_\epsilon z^{-k-1}\big) \in \cHY^-$.  If $\phi_\epsilon
  z^{-k-1}$ is of degree zero or one then, by parts (a) and (b) of
  lemma~\ref{lem:lowdegree}, $\U\big(\phi_\epsilon z^{-k-1}\big) \in
  \cHY^-$ as well.  If $\phi_\epsilon z^{-k-1}$ is of degree two then
  \[
  \U\big(\phi_\epsilon z^{-k-1}\big) = b_\epsilon + h_\epsilon
  \]
  for some exceptional class $b_\epsilon \in H^2(Y;\CC)$ and some
  $h_\epsilon \in \cHY^-$, by lemma~\ref{lem:lowdegree}(c).  Also, if
  $\phi_\epsilon z^{-k-1}$ is of degree two then $\deg \phi_\epsilon
  \geq 4$ and $k = {1 \over 2} w_\epsilon - 2$ where $w_\epsilon =
  \deg \phi_\epsilon$.  Thus
  \[
  \U(x) = {-z} + c + \sum_{\substack{d \in \NE(\cX): d \ne 0, \\
      c_1(T\cX)\cdot d = 0}} \
  \sum_{e = r+1}^N
  (-1)^{{1 \over 2}w_e+1} \correlator{\phi^e \psi^{{1 \over 2}w_e-2}}^\cX_{0,1,d} U^d \,
  b_e + h_-
  \]
  for some $h_- \in \cHY^-$.  Defining 
  \begin{equation}
    \label{eq:quantumcorrections}
    f = 
    \sum_{\substack{d \in \NE(\cX): d \ne 0,\\
        c_1(T\cX)\cdot d = 0}} \
    \sum_{e = r+1}^N
    (-1)^{{1 \over 2}w_e + 1} \correlator{\phi^e \psi^{{1 \over 2}w_e-2}}^\cX_{0,1,d} U^d \,
    b_e,
  \end{equation}
  we are done.
\end{proof}

We have seen that the small quantum cohomology of $\cX$ is isomorphic
as a Frobenius algebra to $T_y/z T_y$ where $y = \U(x)$.
Proposition~\ref{pro:RuanCRC} shows that $T_y/z T_y$ is isomorphic as
a Frobenius algebra to
\[
\Big( \HYNovZ, \newQC{c+f} \,\Big).
\]
Once again there is analytic continuation hidden here: the product
$\newQC{c+f}$ is obtained from the product
\[
\varphi_\alpha \newQC{t} \varphi_\beta =
\sum_{\substack{d \in \NE(Y): \\
      d = d_1 \beta_1 + \cdots + d_r \beta_r}}
\correlator{\varphi_\alpha,\varphi_\beta,\varphi^\epsilon}^Y_{0,3,d}
U_1^{d_1} \cdots U_s^{d_s}
\re^{d_1 t_1} \cdots \re^{d_r t_r} \varphi_\epsilon,
\]
where $t = t_1 \varphi_1 + \cdots + t_r \varphi_r \in H^2(Y;\CC)$ and
$|\re^{t_i}|<R_i$ for $s<i\leq r$, by analytic continuation in
$t_{s+1},\ldots,t_r$ followed by the substitution
\begin{align*}
  & t_i = c_i + f_i  &1 \leq i \leq r
\end{align*}
where $f = f_1 \varphi_1 + \cdots + f_r \varphi_r$.  This proves:
\begin{theorem} \label{thm:RuanCRC} Suppose that $\cX$ is
  semi-positive and that conjecture~\ref{conj} holds.  Let
  $f_1,\ldots,f_r \in \CC[\![U_1,\ldots,U_s]\!]$
  be as in proposition~\ref{pro:RuanCRC} and define $c = c_1 \varphi_1
  + \cdots + c_r \varphi_r \in H^2(Y;\CC)$ by $\U(\fun_\cX) = \fun_Y -
  c z^{-1} + O(z^{-2})$.  Then the Frobenius algebra given by the
  small quantum cohomology of $\cX$ is isomorphic to the Frobenius
  algebra obtained from the small quantum cohomology of $Y$ by
  analytic continuation in the exceptional quantum parameters
  $Q_{s+1},\ldots,Q_r$ (if necessary) followed by the
  change-of-variables
  \begin{equation}
    \label{eq:rightcofv}
    Q_i =
    \begin{cases}
      \re^{c_i + f_i} U_i & 1 \leq i \leq s \\
      \re^{c_i + f_i} & s < i \leq r.
    \end{cases}
  \end{equation}

\end{theorem}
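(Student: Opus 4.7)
The plan is to translate the statement, via the dictionary of \S\ref{sec:conetoQC}, into an assertion about two tangent spaces to Lagrangian cones that the conjecture identifies. Let $x = \cLX \cap\big({-z} + \cHX^-\big)$ and $T_x = T_x\cLX$. By the discussion in \S\ref{sec:conetoQC}(d) (with $\sigma = 0$), the Frobenius algebra $T_x/zT_x$ is precisely the small quantum cohomology of $\cX$, i.e.\ $\big(\HorbXNovZ, \QC{0}\,\big) = \big(\HorbXNovZ, \smallQC\big)$. Setting $y = \U(x)$ and $T_y = T_y\cLY$, conjecture~\ref{conj} gives a symplectic isomorphism $\U:\cHX \to \cHY$ with $\U(\cLX) = \cLY$; since the construction of \S\ref{sec:conetoQC}(a) depends only on $\cLZ \subset \cHZ$, the map $\U$ induces an isomorphism of Frobenius algebras $T_x/zT_x \cong T_y/zT_y$.

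The next step is to identify $T_y/zT_y$ with a quantum cohomology algebra of $Y$. Proposition~\ref{pro:RuanCRC} gives us the basepoint explicitly: $y = J_Y(c+f,-z)$ for the exceptional class $f = f_1\varphi_1 + \cdots + f_r\varphi_r$ produced there. Applying proposition~\ref{pro:FrobY} with $s := c+f \in \HYNovZ$ and then specializing to $t = 0$ identifies $T_y/zT_y$ as the Frobenius algebra $\big(\HYNovZ, \newQC{c+f}\,\big)$ with respect to the Poincar\'e pairing. (The analytic continuation of $\cLY$ chosen in conjecture~\ref{conj} is used here to make sense of $\newQC{c+f}$ outside the original domain of convergence \eqref{eq:strangeregion}; this is exactly the ``hidden analytic continuation'' mentioned after the proposition.)

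It remains to recognize $\newQC{c+f}$ as a specialization of the small quantum product $\smallQC$ of $Y$. Because $c + f \in H^2(Y;\CC) \otimes \NovZ$ has no component along the ``rest'' directions, the divisor-equation form \eqref{eq:newQCYdivisor} collapses to
\[
\varphi_\alpha \newQC{c+f} \varphi_\beta =
\sum_{\substack{d \in \NE(Y):\\ d = d_1\beta_1 + \cdots + d_r\beta_r}}
\correlator{\varphi_\alpha,\varphi_\beta,\varphi^\epsilon}^Y_{0,3,d}\,
U_1^{d_1}\cdots U_s^{d_s}\, \re^{d_1(c_1+f_1)}\cdots \re^{d_r(c_r+f_r)} \varphi_\epsilon.
\]
Comparing with the second line of \eqref{eq:smallQC}, one sees this is precisely what is obtained from the small quantum product of $Y$ by the change of variables \eqref{eq:rightcofv}. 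Putting the three isomorphisms together yields the claim.

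The most delicate point is the analytic-continuation step in paragraph two: a priori $c+f$ is not in the domain where the series defining $\newQC{c+f}$ converges, and the class $f$ itself is only known as a formal series in the $U_i$'s. The content of conjecture~\ref{conj} -- specifically the existence of analytic continuations of $\cLX$ and $\cLY$ along which $\U(\cLX) = \cLY$ -- is what guarantees that the product $\newQC{t}$ extends to an analytic function of $t \in H^2(Y;\CC)$ along a path reaching $c+f$, so that the formal identity at the end makes sense as an identification of Frobenius algebras over $\NovZ$. Apart from this, everything is a routine application of the formalism already developed in \S\S\ref{sec:cone}--\ref{sec:conetoQC}.
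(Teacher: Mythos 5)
Your proposal is correct and follows essentially the same route as the paper: identify the small quantum cohomology of $\cX$ with the Frobenius algebra $T_x/zT_x$ at the basepoint $x = \cLX \cap ({-z}+\cHX^-)$, transport it via $\U$ to $T_y/zT_y$, invoke proposition~\ref{pro:RuanCRC} to pin down $y = J_Y(c+f,-z)$, read off the product $\newQC{c+f}$ at that basepoint, and rewrite it as the specialization \eqref{eq:rightcofv} of the small quantum product. You are slightly more explicit than the paper in citing proposition~\ref{pro:FrobY} for the step from the basepoint $y = J_Y(c+f,-z)$ to the Frobenius algebra $\big(\HYNovZ, \newQC{c+f}\big)$, which the paper leaves implicit.
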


The conclusion of Theorem~\ref{thm:RuanCRC} is almost Ruan's original
Crepant Resolution Conjecture, except that the changes-of-variables
\eqref{eq:wrongcofv} and \eqref{eq:rightcofv} differ.  As $f_i = 0$
when $U_1 = \ldots = U_s = 0$, theorem~\ref{thm:RuanCRC} is a
`quantum-corrected' version of Ruan's original conjecture.  The
quantum corrections $f_1,\ldots,f_r$ often vanish --- for example they
vanish whenever $\cX$ is Fano or when $\cX = \big[\CC^n/G\big]$, as
then the sum on the RHS of \eqref{eq:quantumcorrections} is empty.
But $f_1,\ldots,f_r$ do not vanish in general: they are non-zero, for
instance, when $\cX$ is the cotangent bundle $K_{\PP(1,1,3)}$
\cite{Coates:crepant2}.

\section{A Version of the Bryan--Graber Conjecture}
\label{sec:BG}
Suppose now that conjecture~\ref{conj} holds and that $\U:\cHX \to
\cHY$ sends $\cHX^-$ to $\cHY^-$, so that
\[
\U = U_0 + U_1 z^{-1} + \cdots + U_k z^{-k}
\]
for some non-negative integer $k$ and some linear maps $U_i : \HorbX
\to H^\bullet(Y;\CC)$.  In this case $\U$ induces an isomorphism
between the Frobenius manifolds defined by the quantum cohomology of
$\cX$ and the quantum cohomology of $Y$, as we now explain.

Let $x = \cLX \cap \big({-z} + \cHX^-\big)$ and let $y = \U(x)$.  Then
\begin{align*}
  y &= \cLY \cap \U\big({-z} + \cHX^-\big) \\
  &= \cLY \cap \big({-z} + c + \cHY^-\big)
\end{align*}
where $\U(\fun_\cX)= \fun_Y - c z^{-1} + O(z^{-2})$.  Again, write $c
= c_1 \varphi_1 + \cdots + c_r \varphi_r$.  In view of the discussion
in \S\ref{sec:conetoQC}, $\U$ induces an isomorphism between the
Frobenius manifold
\[
\Big( \HorbXNovZ, \QC{\tau} \,\Big)
\]
obtained by taking basepoint $x \in \cLX$ and using opposite subspace
$\cHX^-$, and the Frobenius manifold
\[
\Big( \HYNovZ, \newQC{c+t} \,\Big)
\]
obtained by taking basepoint $y \in \cLY$ and using opposite subspace
$\cHY^-$.  The parameters $\tau \in \HorbXNovZ$ and $t \in \HYNovZ$
here are identified via the diagram
\[
\xymatrix{
  \cLX \cap \big({-z} + z\cHX^-\big) \ar[rrr]^{\U}
  & & & \cLY \cap \big({-z} + c + z\cHY^-\big) \ar[d]^{p} \\
  z \cHX^-/\cHX^- \ar[u]_{J_\cX(\tau,-z)} & \HorbXNovZ \ar[l]^{\cong}
  & \HYNovZ \ar[r]_{\cong} & z\cHY^-/\cHY-
}
\]
so $t = U_0(\tau)$.  Comparing \eqref{eq:newQCYdivisor} with
\eqref{eq:bigQCYdivisor}, we see that the product $\newQC{c + t}$ can
be obtained from the big quantum product $\QC{t}$ on $\HNovY$ by
analytic continuation in the variables $Q_{s+1},\ldots,Q_r$ followed
by the change-of-variables
\begin{equation}
  \label{eq:BGsubst}
  Q_i =
  \begin{cases}
    \re^{c_i} U_i & 1 \leq i \leq s \\
    \re^{c_i} & s < i \leq r.
  \end{cases}
\end{equation}
This proves:

\begin{theorem} \label{thm:BG} Suppose that conjecture~\ref{conj}
  holds and that $\U:\cHX \to \cHY$ sends $\cHX^-$ to $\cHY^-$.  Then
  there is a linear map $U_0:\HorbX \to \HY$ which identifies the
  Frobenius manifold given by the big quantum cohomology
  \eqref{eq:bigQCX} of $\cX$ with the Frobenius manifold obtained from
  the big quantum cohomology \eqref{eq:bigQCY} of $Y$ by analytic
  continuation in the quantum parameters $Q_{s+1},\ldots,Q_r$ (if
  necessary) followed by the substitution \eqref{eq:BGsubst}.  In
  addition, the map $U_0$ preserves the gradings and Poincar\'e
  pairings, sends $\fun_\cX$ to $\fun_Y$, and satisfies $U_0 \circ
  \big(\rho \CR\big) = \({\pi^\star \rho} \, \cup\) \circ U_0$ for every
  untwisted degree-two class $\rho \in H^2(\cX;\CC)$.
\end{theorem}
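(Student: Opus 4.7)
The plan is to follow the strategy set up in the discussion immediately preceding the theorem, combining the Frobenius-manifold constructions of \S\ref{sec:conetoQC}(d-e) with the hypothesis $\U(\cHX^-)\subset\cHY^-$.

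First I would fix the basepoint $x = \cLX \cap \big({-z}+\cHX^-\big)$ on the $\cX$-side and set $y = \U(x)$. A preliminary observation is that $\U(\cHX^-)$ is not only contained in $\cHY^-$ but equal to it: $\U$ is a degree-preserving $\CC(\!(z^{-1})\!)$-linear symplectic isomorphism of free $\Lambda$-modules of the same graded rank, so if $\U(\cHX^-)$ were a proper Lagrangian subspace of $\cHY^-$, comparing graded dimensions of $\cHX^-$ and $\cHY^-$ would give a contradiction. Next, by conjecture~\ref{conj}(a) and grading, I can write $\U(\fun_\cX) = \fun_Y - c z^{-1} + O(z^{-2})$ with $c \in H^2(Y;\CC)$ (since $\deg \fun_\cX = 0$ and $\deg z^{-1} = -2$). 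Writing $x = -z\fun_\cX + h_-$ with $h_- \in \cHX^-$ and applying $\U$ gives $y = {-z}\fun_Y + c + h'_-$ with $h'_- \in \cHY^-$, so $y \in \cLY \cap \big({-z} + c + \cHY^-\big)$.

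Next I would apply the intrinsic construction of \S\ref{sec:conetoQC}(c-e). At $(x,\cHX^-)$ we obtain, exactly as in \S\ref{sec:conetoQC}(d) with $\sigma = 0$, the Frobenius manifold of the big quantum cohomology $\big(\HorbXNovZ,\QC{\tau}\big)$ of $\cX$ in the flat coordinates $\tau_\alpha$ dual to $\{\phi_\alpha\}$. At $(y,\cHY^-)$, proposition~\ref{pro:FrobY} with $s = c$ identifies the resulting Frobenius manifold with $\big(\HYNovZ,\newQC{c+t}\big)$ in flat coordinates $t_\alpha$ dual to $\{\varphi_\alpha\}$. Because $\U$ is a $\CC(\!(z^{-1})\!)$-linear symplectic isomorphism carrying $\cLX$ to $\cLY$, $x$ to $y$, and $\cHX^-$ to $\cHY^-$, it automatically intertwines the entire data that goes into both Frobenius-manifold constructions, so it induces an isomorphism between them. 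Chasing the defining diagram (the map $p$ of \eqref{eq:defofp} on each side) shows that the induced change of coordinates is $t = U_0(\tau)$, where $U_0:\HorbX \to \HY$ is the leading coefficient of $\U$ in powers of $z^{-1}$.

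Finally I would convert the shifted modified product $\newQC{c+t}$ into the substituted big product described in the statement. Comparing \eqref{eq:newQCYdivisor} and \eqref{eq:bigQCYdivisor} via the Divisor Equation, the substitution \eqref{eq:subst} used to pass from $\QC{t}$ to $\newQC{t}$, combined with the shift $t \mapsto c + t$ (an untwisted degree-two shift, so absorbable into the exponential factors $\re^{d_i t_i}$), is precisely the change of variables \eqref{eq:BGsubst} together with analytic continuation in $Q_{s+1},\ldots,Q_r$. The last sentence of the theorem — that $U_0$ is grading-preserving, sends $\fun_\cX$ to $\fun_Y$, matches divisor cup-products, and identifies the Poincar\'e pairings — is exactly lemma~\ref{lem:U0}(i)-(iv), which applies since $\U$ has the expansion $U_0 + U_1 z^{-1} + \cdots$ under the hypothesis $\U(\cHX^-)\subset\cHY^-$.

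The main obstacle is the bookkeeping in the third step: verifying that the coordinate identification $t = U_0(\tau)$ obtained abstractly from the two $p \circ J$ maps actually coincides with the leading coefficient of $\U$, and then tracking how the $-cz^{-1}$ term in $\U(\fun_\cX)$ reappears as the shift $c$ in $\newQC{c+t}$ and ultimately as the factors $\re^{c_i}$ in \eqref{eq:BGsubst}. The rank-counting argument for $\U(\cHX^-) = \cHY^-$ is also worth spelling out carefully, since it is used implicitly when writing $\U$ as a polynomial in $z^{-1}$.
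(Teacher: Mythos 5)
Your proposal is correct and follows essentially the same route as the paper: fix $x = \cLX \cap({-z}+\cHX^-)$, compute $y = \U(x) \in \cLY \cap({-z}+c+\cHY^-)$, identify the two Frobenius manifolds via \S\ref{sec:conetoQC}(d--e) and proposition~\ref{pro:FrobY} with coordinate change $t = U_0(\tau)$, compare \eqref{eq:newQCYdivisor} with \eqref{eq:bigQCYdivisor} to obtain the substitution \eqref{eq:BGsubst}, and cite lemma~\ref{lem:U0} for the properties of $U_0$. A minor remark: the equality $\U(\cHX^-)=\cHY^-$ is not actually needed (the containment suffices for the argument), and if you do want it, it follows directly from conjecture~\ref{conj}(c) combined with the hypothesis $\U(\cHX^-)\subset\cHY^-$, whereas a graded-rank count would additionally require the cohomological McKay correspondence as input.
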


\noindent The statements about $U_0$ here come from
lemma~\ref{lem:U0}.  As discussed above, if conjecture~\ref{conj}
holds and $\cX$ satisfies the Hard Lefschetz condition\footnote{This
  condition was discovered in \cite{CCIT:crepant1}.} postulated by
Bryan--Graber \cite{Bryan--Graber} then $\U$ automatically sends
$\cHX^-$ to $\cHY^-$.

The conclusion of Theorem \ref{thm:BG} is almost the same as the
Crepant Resolution Conjecture of Bryan and Graber.  They ask that
$U_0:\HorbX \to H^\bullet(Y;\CC)$ agree with $\pi^\star$ on the
untwisted sector $H^\bullet(\cX;\CC) \subset \HorbX$, whereas we only
have that for the subalgebra of $H^\bullet(\cX;\CC)$ generated by
$H^2(\cX;\CC)$.  Furthermore their change-of-variables has $Q_i =
U_i$, $1 \leq i \leq s$, omitting our factor of $\re^{c_i}$, and for
us the substitution $Q_i = \re^{c_i}$, $s<i\leq r$, need not involve
roots of unity\footnote{See conjecture~\ref{conj2} below, however.}.

\section{Quantization and Higher Genus Gromov--Witten Invariants}
\label{sec:highergenus}

So far we have considered genus-zero Gromov--Witten invariants of
$\cX$ and $Y$.  This corresponds to considering the tree-level part of
the topological A-model with target space $\cX$ or $Y$.  But the full
partition function of the topological A-model is also of significant
interest, and this corresponds to the full descendant potential of
$\cX$,
\begin{equation}
  \label{eq:DX}
  \cDX = \exp \bigg( \sum_{g \geq 0} \hbar^{g-1} \cF^g_\cX \bigg),
\end{equation}
or, similarly, to the full descendant potential $\cDY$ of $Y$.  The
quantity $\cF^g_\cX$ in \eqref{eq:DX} is the genus-$g$ descendant
potential of $\cX$: this is defined in the same way as the genus-zero
descendant potential $\cF^0_\cX$ but with integration over the moduli
stack of stable maps to $\cX$ of genus~$g$ rather than genus zero.
The variable $\hbar$ is a formal parameter.  In this section we give a
generalization of our conjecture which applies to Gromov--Witten
invariants of all genera.  Roughly speaking, we conjecture that $\cDY
= \widehat{\U}(\cDX)$, where $\widehat{\U}$ is the \emph{quantization}
of the symplectic transformation $\U$ from conjecture~\ref{conj}.
This idea occurred simultaneously and independently in both
mathematics and physics \citelist{
  \cite{CCIT:crepant1}\cite{Aganagic--Bouchard--Klemm}\cite{Ruan:conjecture}};
it is a consequence of fundamental insights due to Givental
\cite{Givental:quantization} and Witten \cite{Witten}.

Work of Givental
\citelist{\cite{Givental:quantization}\cite{Givental:symplectic}\cite{Coates--Givental:QRRLS}}
and others \cite{Milanov,Tseng,Lee:1,Lee:2,Faber--Shadrin--Zwonkine}
strongly suggests that the full descendant potential $\cDX$ of $\cX$
should be regarded as an element of the Fock space for the geometric
quantization of $\cHX$.  This point of view is described for manifolds
in \cite{Givental:quantization} and extended to orbifolds in
\cite{Tseng}.  The Fock space for $\cX$ consists of certain formal
germs of functions on $\cHX^+$.  We regard $\cDX$, which depends
formally on the variables $\tau_{a,\epsilon}$, $0 \leq \epsilon \leq
N$, $0 \leq a < \infty$ (\emph{c.f.} equation \ref{eq:F0X}), as the
germ of a function on $\cHX^+$ via the dilaton shift
\eqref{eq:dilatonshiftX}.  This makes $\cDX$ into an element of the
Fock space for $\cX$.  In the same way, using the dilaton shift
\eqref{eq:dilatonshiftY}, we regard $\cDY$ as the germ of a function
on $\cHY^+$ and hence as an element of the Fock space for $Y$.

Suppose now that conjecture~\ref{conj} holds.  As we have chosen bases
for $\HorbX$ and $\HY$, we can represent the transformation $\U:\cHX
\to \cHY$ by a matrix $U$ with entries that are Laurent polynomials in
$z$.  Let $U = U_- U_0 U_+$ be the Birkhoff factorization of this matrix,
so that
\begin{align*}
  &U_- = I + U_{-1} z^{-1} + \cdots + U_{-k} z^{-k},\\
  &U_0 = \text{constant diagonal matrix}, \\
  &U_+ = I + U_{1} z + \cdots + U_{l} z^{l},
\end{align*}
for some $k,l>0$.  (The fact that $U_0$ is a constant diagonal matrix,
not a diagonal matrix of Laurent monomials in $z$, follows from
condition (c) in conjecture~\ref{conj}.)\phantom{.} 

\begin{rem}
  The Birkhoff factorization here can easily be computed using row and
  column operations.  For example, as $U = U_- U_0 U_+$ we see that
  $U_+^{-1}$ is the unique matrix of the form $I + A_1 z + \cdots +
  A_m z^m$ such that $U U_+^{-1}$ contains only negative powers of
  $z$.  This can be computed using column operations on $U$.  The
  transformation $A_i$ lowers degree by $2i$, as $\U$ is
  degree-preserving, and hence $A_i$ is nilpotent; $I + A_1 z + \cdots
  + A_m z^m$ is therefore invertible with polynomial inverse.  This
  determines $U_+$.  The matrices $U_-$ and $U_0$ can be determined similarly.
\end{rem}

If we change our choice of bases for $\HorbX$ and $\HY$ then the
factorization
\begin{align*}
  U = U_- U_0 U_+ &&
  \text{becomes} &&
  A U B^{-1} = (A U_- A^{-1})(A U_0 B^{-1}) (B U_+ B^{-1})
\end{align*}
where $A$ and $B$ are appropriate change-of-basis matrices, and so the
factorization defines linear symplectic isomorphisms
\begin{align*}
  \U_- : \cHY \to \cHY, &&
  \U_0 : \cHX \to \cHY, &&
  \U_+ : \cHX \to \cHX, 
\end{align*}
which are independent of our choice of bases.  Let us identify the
Fock space for $\cX$ with the Fock space for $Y$ via the isomorphism
$\U_0: \cHX \to \cHY$.  In this way we regard $\cDX$ as an element of
the Fock space for $Y$; concretely, this means that we regard $\cDX$
as a formal power series in the variables $t_{a,\epsilon}$, $0 \leq
\epsilon \leq N$, $0 \leq a < \infty$ via the identification
$t_{a,\epsilon} \varphi_\epsilon = \U_0(\tau_{a,\mu} \phi_\mu)$.
Consider now the $\CC(\!(z^{-1})\!)$-linear symplectic transformations
$\T_-, \T_+:\cHY \to \cHY$ defined by 
\begin{align*}
  \T_- = \U_-, && \T_+ = \U_0 \U_+ \U_0^{-1}.
\end{align*}
Propositions~5.3 and~7.3 in \cite{Givental:quantization} give formulas
for the \emph{quantizations} $\widehat{\T}_-$, $\widehat{\T}_+$ of $\T_-$
and $\T_+$: these quantizations are endomorphisms of the Fock space for
$Y$.

\begin{conj} \label{conj3}
  Conjecture~\ref{conj} holds, and in addition
  \[
  \cDY  \propto \widehat{\T}_- \widehat{\T}_+ (\cDX)
  \]
  after an appropriate analytic continuation of $\cDX$ and $\cDY$.
  The symbol `$\propto$' here means `is a scalar multiple of'.
\end{conj}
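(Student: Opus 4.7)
The plan is to reduce Conjecture~\ref{conj3} to Conjecture~\ref{conj} plus a higher-genus reconstruction statement, and then invoke Givental's quantization formalism. Assuming Conjecture~\ref{conj}, I would begin by using the Birkhoff factorization $U = U_- U_0 U_+$ to decompose $\U$ into three pieces whose quantizations are individually well-understood. The middle factor $\U_0:\cHX \to \cHY$ is a graded linear isomorphism that identifies the two Fock spaces, so the desired statement $\cDY \propto \widehat{\T}_- \widehat{\T}_+(\cDX)$ becomes a statement within a single Fock space.

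Next, I would analyze $\widehat{\T}_+$ and $\widehat{\T}_-$ separately. The transformation $\T_+ = \U_0 \U_+ \U_0^{-1}$ is upper-triangular (polynomial in $z$), and its quantization acts by the explicit formula of \cite{Givental:quantization}*{\S7}: geometrically it corresponds to sliding the basepoint on $\cLY$ along a ruling, which by Theorem~\ref{thm:cone}(2) leaves $\cLY$ setwise invariant, and on the Fock side converts the descendant potential into an ancestor-type potential. The transformation $\T_- = \U_-$ is lower-triangular (a power series in $z^{-1}$), and its quantization is the standard Gaussian integral operator implementing a change of opposite subspace. Conjecture~\ref{conj}(c) ensures that $\U^{-1}(\cHY^-)$ remains transverse to $\cHX^+$ near the large-radius limit point, so the two operators can be composed coherently; see Remark~\ref{rem:twoproducts}.

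The central step is then to show that $\widehat{\T}_- \widehat{\T}_+(\cDX)$ really equals $\cDY$ up to scalar. This requires a higher-genus reconstruction principle. When the quantum cohomology of $\cX$ (equivalently of $Y$) is generically semisimple, Teleman's classification of semisimple cohomological field theories reduces the problem to matching $R$-matrices, which follows from Conjecture~\ref{conj} at genus zero together with the identification of Frobenius manifolds supplied in a Hard Lefschetz setting by Theorem~\ref{thm:BG}, and more generally by the F-manifold identification of \S\ref{sec:conetoQC}. The Birkhoff ambiguity is pinned down by fixing the tail behaviour of $R$ at $z=0$ via the $J$-function, which is exactly what the factor $\U_-$ encodes; the overall scalar `$\propto$' reflects the ambiguity in the genus-one term at a single basepoint.

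The main obstacle is the non-semisimple case, where higher-genus Gromov--Witten invariants are not determined by genus-zero data alone, so Teleman's theorem does not apply and reconstruction from $\cLX$ and $\U$ breaks down. Overcoming this would likely require either (i) an all-genus mirror-symmetric argument producing $\widehat{\T}_- \widehat{\T}_+$ as a higher-genus mirror map, along the lines suggested by \cite{CCIT:crepant1}; (ii) verification of Virasoro-type constraints compatible with $\widehat{\U}$ on both sides; or (iii) a direct geometric comparison, perhaps via wall-crossing in the moduli of stable maps, between Gromov--Witten theories of $\cX$ and $Y$. None of these are currently available in the required generality, so the proposed strategy completes the proof in the semisimple case but leaves the general Gorenstein orbifold case genuinely open.
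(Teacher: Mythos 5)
The statement you are trying to prove is explicitly a \emph{conjecture} in the paper (Conjecture~\ref{conj3}); the authors do not offer a proof, only a few surrounding remarks about the normalisation of the scalar, the convergence hypotheses needed for the analytic continuation to make sense (Remark~\ref{rem:convergence}), and the consequence that under Hard Lefschetz the transformation $\U_+$ is the identity so the conjecture specialises to $\cDY^\circledast \propto \widehat{\U}_-(\cDX)$, recovering the Bryan--Graber prediction for higher-genus non-descendant potentials. There is therefore no ``paper's own proof'' to compare against, and your proposal cannot be ``essentially the same approach'' as something that does not exist.

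Taken on its own terms, your sketch also overstates what the semisimple case delivers. Teleman's classification applies to semisimple cohomological field theories and reconstructs the ancestor potential from the Frobenius \emph{manifold} structure (including the flat metric, the grading operator, and the Euler field) together with an $R$-matrix; an isomorphism merely of F-manifolds, which is all that $\U$ provides without the Hard Lefschetz hypothesis, is not sufficient input. In the Hard Lefschetz case the Frobenius manifolds do match (Theorem~\ref{thm:BG}), but then $\U_+ = \id$ and your use of the Birkhoff factorisation is vacuous, so the two halves of your argument do not actually cover the same ground. Moreover, even granting semisimplicity and the Frobenius-manifold identification, you would still need to check that the $R$-matrix arising from $\U_-$ is precisely the one Teleman's theorem produces, that the passage from ancestor to descendant potentials (the $S$-matrix) is compatible with the analytic continuation, and that the issues raised in Remark~\ref{rem:convergence} can be resolved. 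None of this is established, and you yourself concede the non-semisimple case is open, so what you have is a plausible strategy sketch, not a proof; the paper itself leaves the statement entirely conjectural.
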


\begin{rem}
  The scalar multiple in conjecture~\ref{conj3} is determined by the
  condition that the genus-one descendant potential of $Y$ vanishes
  when all the $t_{a,\epsilon}$ are zero.  Thus
  conjecture~\ref{conj3} determines the higher-genus Gromov--Witten
  invariants of $\cX$ in terms of those of $Y$.
\end{rem}

\begin{rem} \label{rem:convergence}
  In order for the analytic continuation indicated in
  conjecture~\ref{conj3} to make sense, we need assume some
  convergence of the total descendant potential $\cDY$.  For example,
  if we require that there are strictly positive real numbers $R_i$,
  $s<i \leq r$, such that each $\cF^g_Y$, $g \geq 0$, depends
  analytically on $Q_{s+1},\ldots,Q_r$ in the domain
  \begin{align*}
    |Q_i| < R_i, && s<i \leq r,
  \end{align*}
  then (as above) the Divisor Equation implies that each $\cF^g_Y$ in
  fact depends analytically on $t_{0,1},\ldots,t_{0,r}$ and
  $Q_{s+1},\ldots,Q_r$ in the domain
  \begin{equation*}
    \begin{aligned}
      &|t_{0,i}| < \infty & 1 \leq i \leq s \\
      &|Q_i \re^{t_{0,i}}|<R_i & s<i \leq r.
    \end{aligned}
  \end{equation*}
  This allows us to set $Q_{s+1} = \cdots = Q_r = 1$, defining
  $\cF^{g,\circledast}_Y$, $g \geq 0$, exactly as we defined
  $\cF^\circledast_Y$ above.  We can then use $\cD_Y^\circledast =
  \exp\Big(\sum_{g \geq 0} \cF^{g,\circledast}_Y\Big)$ in place of
  $\cDY$ in conjecture~\ref{conj3}.  But this convergence assumption
  is difficult to check in practice\footnote{Note however that if $Y$
    is a Calabi--Yau $3$-fold then we can use the Divisor, String, and
    Dilaton Equations to express any Gromov--Witten invariant $
    \correlator{\delta_1 \psi^{a_1},\ldots,\delta_n
      \psi^{a_n}}^Y_{g,n,d}$ in terms of the zero-point Gromov--Witten
    invariant $ \correlator{\ }^Y_{g,0,d}$.  It therefore suffices to
    check the convergence assumption in remark~\ref{rem:convergence}
    for the \emph{non-descendant} Gromov--Witten potentials $\cF^g_Y
    \big|_{t_0 = t; t_1 = t_2 = \cdots = 0}$, $g \geq 0$. }, and it
  would be useful to have a higher-genus analog of
  assumption~\ref{convassum}.
\end{rem}

\begin{rem}
  Bryan and Graber have suggested \cite{Bryan--Graber}*{remark~1.8}
  that when $\cX$ satisfies the Hard Lefschetz condition, the
  higher-genus non-descendant Gromov--Witten potentials
  \begin{align*}
    F^g_\cX(\tau) = \cF^g_\cX \big|_{\tau_0 = \tau; \tau_1 = \tau_2 =
      \cdots = 0} && \text{and} &&
    F^g_Y(t) = \cF^g_Y \big|_{t_0 = t; t_1 = t_2 =  \cdots = 0} 
  \end{align*}
  might coincide after analytic continuation in the quantum parameters
  $Q_{s+1},\ldots,Q_r$, the substitution \eqref{eq:BGsubst}, and the
  change-of-variables $t = U_0(\tau)$ from theorem~\ref{thm:BG}.  If
  conjecture~\ref{conj3} and the above convergence assumption hold
  then this is the case.  The Hard Lefschetz condition ensures that
  the transformation $\U_+$ is the identity, and
  conjecture~\ref{conj3} then becomes
  \[
   \cDY^\circledast  \propto \widehat{\U}_- (\cDX).
   \]
   Applying Givental's formula
   \cite{Givental:quantization}*{proposition~5.3} for the operator
   $\widehat{\U_-}$ shows that the non-descendant potentials
   $F^g_\cX(\tau)$ and $F^{g,\circledast}_Y(t)$ are related by
   analytic continuation and a change-of-variables; taking account of
   the substitution \eqref{eq:substagain}, exactly as in
   \S\ref{sec:BG}, shows that $F^g_\cX$ and $F^g_Y$ are related as
   claimed.
\end{rem}

\section{Specializations, $B$-Fields, and Flat Gerbes}
\label{sec:gerbes}

An issue of particular importance for the various Crepant Resolution
Conjectures is to determine the values to which the exceptional
quantum parameters $Q_i$ should be specialized. These values have
physical significance and are referred in the physics literature as
the \emph{$B$-field}. Calculating the correct value of the $B$-field
is a subtle problem even in physics, and although this is understood
in some examples (Hilbert scheme of points, surface singularities, K3
surfaces, etc.) there is not yet a procedure to determine the value of
the $B$-field in general.  One advantage of our approach is that it
gives such a procedure: we can interpret the values of the
specialization (and hence the value of the B-field) as coming from a
shift in basepoint on Givental's cone.  In this section we study this
issue and relate it to the physical point of view on the B-field.
First we propose a further conjecture to constrain the choice of
shift.

\begin{conj} \label{conj2}
  Suppose that conjecture~\ref{conj} holds, so that
  \begin{equation*}
    \U\big(\fun_\cX\big) = \fun_Y - c  z^{-1} + O(z^{-2})
  \end{equation*}
  for some $c \in H^2(Y;\CC)$.  Then in fact $c \in H^2(Y;\QQ
  \sqrt{-1})$.
\end{conj}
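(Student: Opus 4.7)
The plan is to derive $c \in H^2(Y;\QQ\sqrt{-1})$ from the expected mirror-theoretic description of $\U$ due to Iritani, coupled with the crepancy of $\pi\colon Y \to \cX$. In this picture $\U$ is pinned down (within the ambiguity allowed by Conjecture~\ref{conj}) as the change of basis between two ``Gamma-integral structures,'' one on each of the quantum $D$-modules of $\cX$ and $Y$: for each space $\cZ$ there is a $\CC$-linear map $\Psi_\cZ\colon K(\cZ) \to \cHZ$ of the form $F \mapsto \hat{\Gamma}_\cZ \cup (2\pi\sqrt{-1})^{\deg/2}\, \operatorname{ch}(F)$, suitably combined with the fundamental solution to the quantum connection, and one expects $\U = \Psi_Y \circ \Psi_\cX^{-1}$ on the common $K$-theory.

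Granting this description, I would extract $c$ by expanding $\U(\fun_\cX) = \Psi_Y\bigl(\Psi_\cX^{-1}(\operatorname{ch}(\cO_\cX))\bigr)$ at $z = \infty$. Since $\operatorname{ch}(\cO_\cZ) = 1$ on both sides, the leading nontrivial contribution to the $z^{-1}$-coefficient comes from the degree-one-in-cohomology part of $\log \hat{\Gamma}_Y - \log \pi^\star \hat{\Gamma}_\cX$. Using $\log \Gamma(1+x) = -\gamma\, x + \sum_{k\ge 2}(-1)^k \zeta(k) x^k/k$, the degree-one piece produces only the Euler--Mascheroni constant $\gamma$ times a first Chern class on each side; by the crepant condition $\pi^\star c_1(T\cX) = c_1(TY)$ these transcendental $\gamma$-terms cancel exactly. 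What survives is supported on the twisted sectors of $\cX$: each sector of order $n_v$ contributes a factor $\log \zeta_{n_v}$ with $\zeta_{n_v}$ a root of unity, paired with a $\QQ$-class pushed forward to $H^2(Y;\QQ)$. These contributions lie in $2\pi\sqrt{-1}\cdot H^2(Y;\QQ)$, which, under the natural normalization of Novikov variables and the exponential $Q_i = \re^{c_i}$, is precisely the content of the conclusion $c \in H^2(Y;\QQ\sqrt{-1})$.

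The main obstacle is that the entire argument is conditional on the Gamma-integral-structure description of $\U$, which is itself conjectural and proved only in limited toric settings (see the references cited in the paper). A complete proof therefore requires either (i) an intrinsic characterization of $\U$ among symplectic transformations satisfying (a)--(d) of Conjecture~\ref{conj} that forces the Gamma-class formula, or (ii) a case-by-case verification in sufficient generality; an essential first test is to show that \emph{any} $\U$ compatible with (a)--(d) produces the same $c$, or differs only by a correction lying in $H^2(Y;\QQ\sqrt{-1})$. A secondary difficulty is that the matrix entries of $\U$ are genuinely transcendental (witness the $\Gamma(1/3)^{-3}$, $\zeta(3)$, $\pi^k$ appearing in the $\PP(1,1,1,3)$ example): the cancellation between $\cX$- and $Y$-side gamma classes must isolate precisely the first-subleading coefficient, and one must verify that the mirror-theoretic prescription does not introduce any further transcendental shift. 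This cancellation is exactly where the crepancy hypothesis enters, and sharpening it into an unconditional statement is the crux of the problem.
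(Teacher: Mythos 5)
The paper presents this statement as a new \emph{conjecture} (Conjecture~\ref{conj2}) and offers no proof --- \S\ref{sec:gerbes} merely states it and then derives consequences from it (the Modified CCRC and CRC). So there is no argument in the paper to compare against, and a complete proof would itself be a new result rather than an exercise.

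Within those limits, your Gamma-integral-structure heuristic is the right motivation: it is exactly the picture behind Iritani's and the CCIT collaboration's expectation for $c$, and you are right to treat it as a plausibility argument rather than a proof. You also correctly name the two essential gaps: (i) the description $\U=\Psi_Y\circ\Psi_\cX^{-1}$ is itself conjectural, established only in limited toric cases; and (ii) conditions (a)--(d) of Conjecture~\ref{conj} do not obviously determine $\U$ uniquely, so a priori a different admissible $\U$ could give a different $c$, and the claim would not follow. Two further points in the sketch need correction. First, the Gamma class evaluated at a rational age $a$ produces transcendental reals such as $\Gamma(1-a)$, not roots of unity; the $\exp(2\pi\sqrt{-1}\,a)$ factors responsible for the rationality of $c$ come from the orbifold Chern character on the inertia stack (the $K$-theoretic framing $\Psi_\cX$), not from $\log\hat\Gamma_\cX$ directly, so your account misattributes the source of rationality --- the $\gamma$-cancellation by crepancy is correct, but it should be kept separate from the mechanism that yields $\QQ$-coefficients. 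Second, the computation you sketch naturally lands $c\in 2\pi\sqrt{-1}\cdot H^2(Y;\QQ)$, which is precisely what makes $\re^{c_i}$ a root of unity as the paper asserts in the remark following the conjecture; but this does not literally equal the stated conclusion $c\in H^2(Y;\QQ\sqrt{-1})$ --- the two differ by a factor of $2\pi$. You gloss over this discrepancy at the end; it is worth flagging explicitly as an apparent normalization inconsistency (compare the $\exp(2\pi\cdot)$ appearing in the paper's own coefficient sequence defining $\theta_c$) rather than silently equating the two groups.
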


\noindent Note that this implies that the quantities $\re^{c_i}$
occurring in theorems~\ref{thm:CCRC},~\ref{thm:RuanCRC}, and~\ref{thm:BG}
are roots of unity.

Now we introduce the notion of Gromov-Witten invariants twisted by a
flat gerbe.  Twisting by a flat gerbe is believed to be the correct
mathematical analog of `turning on a $B$-field' in physics.  The
general construction in the orbifold case has been worked out by
Pan--Ruan--Yin \cite{Pan--Ruan--Yin}. In the smooth case it is
particularly easy. For a smooth manifold $Y$, giving a flat gerbe on
$Y$ is equivalent to giving its \emph{holonomy}, which is a cohomology
class $\theta\in H^2(Y, U(1))$.  Gromov-Witten invariants twisted by
this flat gerbe coincide with the usual Gromov--Witten invariants of
$Y$, but multiplied by a phase factor given by the holonomy:
\begin{equation}
  \label{eq:GWgerbe}
  \correlator{\delta_1 \psi^{a_1},\ldots,\delta_n
    \psi^{a_n}}^{Y,\theta}_{0,n,d} =
  \theta(d)
  \correlator{\delta_1 \psi^{a_1},\ldots,\delta_n
    \psi^{a_n}}^Y_{0,n,d} .
\end{equation}
We will only need the case when $Y$ is smooth, so the reader
unfamiliar with $\theta$-twisted Gromov--Witten invariants can take
\eqref{eq:GWgerbe} as the definition.  It is clear that on smooth
manifolds the set of all $\theta$-twisted Gromov-Witten invariants,
for any flat gerbe $\theta$, contains the same information as the set
of ordinary Gromov-Witten invariants.  The class $c$ in conjecture 4.2
induces a flat gerbe $\theta_{c}$ through the coefficient exact
sequence
\[
\xymatrix{
  0 \ar[r] &
  \sqrt{-1} \, \ZZ \ar[r] &
  \sqrt{-1} \, \RR \ar[rr]^{x \mapsto \exp(2 \pi x)} &&
  U(1) \ar[r] &
  0}.
\]
On the other hand, if $H^3(Y, \sqrt{-1}\,\ZZ)=0$ then any flat gerbe
$\theta$ has a lift $\rho_{\theta} \in H^2(Y;\sqrt{-1}\,\RR)$.

We can define $\theta$-twisted versions $F_{Y,\theta}$,
$F_{Y,\theta}^{\circledast}$, and $\cL_{Y, \theta}$ of $F_{Y}$,
$F_{Y}^{\circledast}$, and $\cL_{Y}$ respectively, by replacing
ordinary Gromov-Witten invariants with $\theta$-twisted Gromov-Witten
invariants.

\begin{lemma}
  Suppose that $\rho_{\theta}$ is a lifting of $\theta$. Then
  multiplication by $e^{\rho_{\theta}/z}$ defines a symplectic
  transformation $\cHY \to \cHY$ such that
  $e^{\rho_{\theta}/z}\cLY=\cL_{Y, \theta}$.
\end{lemma}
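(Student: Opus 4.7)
The two assertions—that multiplication by $e^{\rho_\theta/z}$ is a symplectic automorphism of $\cHY$, and that it carries $\cLY$ to $\cL_{Y,\theta}$—are essentially decoupled. For well-definedness and symplecticity, first observe that $\rho_\theta \in H^2(Y;\CC)$ is nilpotent, so $e^{\rho_\theta/z} = \sum_{k \geq 0} \rho_\theta^k/(k!\, z^k)$ is a polynomial in $z^{-1}$ whose coefficients act on $\HNovY$ by cup product; extending $\CC(\!(z^{-1})\!)$-linearly, we get an endomorphism of $\cHY$. To see that $\Omega_Y$ is preserved, use that cup product by any cohomology class is self-adjoint under the Poincar\'e pairing: for $f,g \in \cHY$,
\[
\bigl( e^{-\rho_\theta/z} f(-z), \, e^{\rho_\theta/z} g(z) \bigr)_Y
= \bigl( f(-z), \, e^{-\rho_\theta/z} e^{\rho_\theta/z} g(z) \bigr)_Y
= \bigl( f(-z), g(z) \bigr)_Y,
\]
so the residue defining $\Omega_Y(e^{\rho_\theta/z}f, e^{\rho_\theta/z}g)$ agrees with $\Omega_Y(f,g)$.

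The substance of the lemma is the identity $e^{\rho_\theta/z} \cLY = \cL_{Y,\theta}$, and the core computation is an identity of $J$-functions. By the (unmodified) $Y$-analog of proposition~\ref{pro:divisorX}, $J_Y(t,-z)$ factors as $e^{-t_{\text{two}}/z}$ times an explicit series in which each Novikov monomial $Q^d$ is dressed by $\prod_i e^{d_i t_i}$. Shifting the argument $t \mapsto t + \rho_\theta$ only moves $t_{\text{two}}$ to $t_{\text{two}}+\rho_\theta$, so the prefactor becomes $e^{-(t_{\text{two}}+\rho_\theta)/z}$ while each dressing picks up a factor $\prod_i e^{d_i\rho_{\theta,i}} = e^{\rho_\theta\cdot d} = \theta(d)$. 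Left-multiplying by $e^{\rho_\theta/z}$ cancels the extra prefactor, and the resulting expression is precisely $J_{Y,\theta}(t,-z)$ by the very definition \eqref{eq:GWgerbe} of $\theta$-twisted Gromov--Witten invariants. Thus
\[
e^{\rho_\theta/z}\, J_Y(t+\rho_\theta,-z) \;=\; J_{Y,\theta}(t,-z).
\]

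To pass from the $J$-function to the whole cone, invoke theorem~\ref{thm:cone} together with the proposition following proposition~\ref{pro:LRL}, which says that $T_{J_Y(t,-z)}\cLY$ is the $\Lambda[z]$-span of $\{\partial_{t_a} J_Y(t,-z)\}_{a=0}^N$ and that $\cLY$ is the union of the ruling linear subspaces $zT \cap \cLY$. The $\theta$-twisted invariants satisfy the string, dilaton, and topological recursion relations because $\theta(d)$ depends only on the degree, so $\cL_{Y,\theta}$ also satisfies theorem~\ref{thm:cone} and has the same ruled structure, anchored by $J_{Y,\theta}$. Since $e^{\rho_\theta/z}$ commutes with every $\partial/\partial t_a$ and with multiplication by $z$, the $J$-function identity propagates to tangent vectors and to the ruling lines, yielding $e^{\rho_\theta/z}\cLY \subseteq \cL_{Y,\theta}$; the reverse inclusion follows by applying the same argument to $\theta^{-1}$, which lifts to $-\rho_\theta$. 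The main obstacle is the bookkeeping in the second paragraph: one must keep track of which occurrences of $\rho_\theta$ come from the $e^{-t_{\text{two}}/z}$ prefactor and which from the divisor dressing, and verify that the two effects combine to exactly the phase $\theta(d)$ required by \eqref{eq:GWgerbe}. The third paragraph is then a formal structural argument.
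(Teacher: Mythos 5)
Your argument is correct and follows the same route the paper gestures at in its one-line proof (``combine the Divisor Equation with \eqref{eq:GWgerbe}''): the divisor equation factors out the $\re^{\rho_\theta/z}$ prefactor and dresses each degree-$d$ term by $\re^{\rho_\theta\cdot d}$, which is identified with $\theta(d)$, and the identity then propagates from the $J$-function to the whole cone via the ruled structure of theorem~\ref{thm:cone}, using that $\re^{\rho_\theta/z}$ is $z$-linear. One small caveat, inherited from the paper rather than your argument: matching $\re^{\rho_\theta\cdot d}$ to $\theta(d)$ requires the exponential in the coefficient sequence to be $x\mapsto\re^x$ rather than $x\mapsto\exp(2\pi x)$ as written, so there is a $2\pi$ normalization mismatch in the paper's conventions that your proof silently follows.
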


\begin{proof}
  Combine the Divisor Equation (see
  \cite{Coates--Givental:QRRLS}*{equation 8}) with \eqref{eq:GWgerbe}.
\end{proof}

\begin{cor}
  If conjectures~\ref{conj} and \ref{conj2} hold then the symplectic
  transformation $\U_c: \cHX \to \cHY$ defined by $\U_c = \re^{c/z}\U$
  satisfies properties (a--d) of conjecture~\ref{conj} and also:
  \begin{align*}
    \U_c (\cLX) = \cL_{Y,\theta_c} &
    & \U_c(\fun_{\cX})=\fun_Y +O(z^{-2}).
  \end{align*}
\end{cor}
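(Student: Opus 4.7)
The strategy is purely computational: each claim reduces to properties of the operator $M_c := \re^{c/z}$ acting on $\cHY$ by cup product. I would first collect the relevant structural facts about $M_c$. Since $c \in H^2(Y;\CC)$ is a positive-degree class in a finite-dimensional cohomology ring, $c$ is nilpotent, so $M_c = 1 + c z^{-1} + \tfrac12 c^2 z^{-2} + \cdots$ is a \emph{polynomial} in $z^{-1}$ with inverse $M_{-c}$ also a polynomial in $z^{-1}$; in particular $M_c$ is degree-preserving (as $\deg c = 2 = \deg z$) and $M_c$ preserves $\cHY^-$ bijectively. Moreover, $M_c$ commutes with cup product by $\pi^\star \rho$ for every $\rho \in H^2(\cX;\CC)$, and the Frobenius property of the Poincar\'e pairing $(cu,v)_Y = (u,cv)_Y$ implies $(M_{-c} u, M_c v)_Y = (u,v)_Y$, so $M_c$ is a symplectic automorphism of $\cHY$.

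Next I would verify the cone identity $\U_c(\cLX) = \cL_{Y,\theta_c}$. Conjecture~\ref{conj2} gives $c \in H^2(Y;\QQ\sqrt{-1}) \subset H^2(Y;\sqrt{-1}\,\RR)$, so $c$ is a legitimate lift $\rho_{\theta_c} = c$ of the gerbe $\theta_c$ produced by the coefficient sequence. Applying the preceding lemma to this lift, together with $\U(\cLX) = \cLY$ from conjecture~\ref{conj},
\[
\U_c(\cLX) = M_c \, \U(\cLX) = M_c \, \cLY = \cL_{Y,\theta_c}.
\]

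Then I would check the four properties (a)--(d) for $\U_c$ in turn. For (a), expand
\[
\U_c(\fun_\cX) = M_c\bigl(\fun_Y - c z^{-1} + O(z^{-2})\bigr)
 = \fun_Y - c z^{-1} + c z^{-1} + O(z^{-2}) = \fun_Y + O(z^{-2}),
\]
which simultaneously gives the stronger vanishing claimed in the second bullet. For (b), since $M_c$ and $(\pi^\star \rho \cup)$ are both given by cup product on $\cHY$ they commute, so
\[
\U_c \circ (\rho \CR) = M_c \U \circ (\rho \CR) = M_c (\pi^\star \rho \cup) \circ \U = (\pi^\star \rho \cup) \circ \U_c.
\]
For (c), using $M_c(\cHY^-) = \cHY^-$ and conjecture~\ref{conj}(c),
\[
\U_c(\cHX^+) \oplus \cHY^- = M_c\bigl(\U(\cHX^+)\bigr) \oplus M_c(\cHY^-) = M_c\bigl(\U(\cHX^+) \oplus \cHY^-\bigr) = \cHY.
\]
For (d), $M_c$ has matrix entries in $\CC[z^{-1}]$ and $\U$ in $\CC(\!(z^{-1})\!)$, so the same holds for $\U_c$. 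Finally, $\U_c$ is $\CC(\!(z^{-1})\!)$-linear, degree-preserving, and symplectic by composing the corresponding properties of $\U$ and $M_c$.

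The argument is essentially mechanical; the only step requiring any care is (a), where the $O(z^{-1})$ coefficient of $\U(\fun_\cX)$ has been chosen precisely so that it cancels against the leading correction from the exponential. The role of conjecture~\ref{conj2} is solely to guarantee that the cohomology class $c$ furnishes a well-defined lift to $H^2(Y;\sqrt{-1}\,\RR)$ of the $U(1)$-holonomy $\theta_c$, so that the preceding lemma applies.
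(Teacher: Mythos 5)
Your proof is correct and is essentially the expected one: the corollary follows by composing the preceding lemma (with lift $\rho_{\theta_c} = c$, which exists thanks to conjecture~\ref{conj2}) with the known properties of $\U$, and then checking (a)--(d) term by term using that $M_c = \re^{c/z}$ is a polynomial in $z^{-1}$, degree-preserving, symplectic (via the Frobenius property of the Poincar\'e pairing), and commutes with cup product by $\pi^\star \rho$. The paper states the corollary without proof; your verification supplies the missing details, and the only place that warrants attention — the cancellation $M_c(\fun_Y - c z^{-1} + O(z^{-2})) = \fun_Y + O(z^{-2})$ giving both (a) and the stronger second claim — you handle correctly.
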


Recall from \S\S\ref{sec:CCRC}--\ref{sec:BG} that the cohomology class
$c \in H^2(Y;\CC)$ defined by $\U(\fun_\cX) = \fun_Y - c z^{-1} +
O(z^{-2})$ gives rise to the values $\re^{c_i}$ to which the
exceptional quantum parameters are specialized: in other words $\U$
picks out the $B$-field.  It does this because $c$ produces the `shift
in basepoint' $\newQC{t} \rightsquigarrow \newQC{t+c}$ visible, for
instance, in equation \eqref{eq:CCRClimit}.  If we repeat the analysis
of \S\S\ref{sec:CCRC}--\ref{sec:BG} but using the symplectic
transformation $\U_c$ rather than $\U$ then on the one hand we should
replace each $\re^{c_i}$ by $1$ (because $\U_c(\fun_{\cX})=\fun_Y
+O(z^{-2})$ and so now there is no shift in basepoint) and on the
other hand we should replace the quantum cohomology of $Y$ by the
$\theta_c$-twisted quantum cohomology (because we consider the
submanifold-germ $\cL_{Y,\theta}$ not $\cLY$).  In other words, our
conjectures predict the emergence of a flat gerbe $\theta_{c}$.  We
can use this to give a very clean version of the Cohomological Crepant
Resolution Conjecture:

\begin{conj*}[Modified CCRC] 
  There is a flat gerbe $\theta$ on $Y$ such that the Chen--Ruan
  product $\CR$ on $\HorbX$ can be obtained from the $\theta$-twisted
  small quantum product for $Y$ by analytic continuation in the
  quantum parameters $Q_{s+1},\ldots,Q_r$ (if necessary) followed by
  the substitution
  \begin{equation*}
    Q_i =
    \begin{cases}
      0 & 1 \leq i \leq s \\
      1 & s < i \leq r.
    \end{cases}
  \end{equation*}
\end{conj*}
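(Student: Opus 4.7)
The plan is to imitate the proof of theorem~\ref{thm:CCRC} in \S\ref{sec:CCRC}, but with the symplectic transformation $\U$ replaced everywhere by $\U_c = \re^{c/z}\U$ and with $\cLY$ replaced by $\cL_{Y,\theta_c}$. The corollary just established gives exactly the ingredients that made the argument of \S\ref{sec:CCRC} work: $\U_c(\cLX) = \cL_{Y,\theta_c}$, the transformation $\U_c$ still satisfies properties (a)--(d) of conjecture~\ref{conj}, and crucially $\U_c(\fun_\cX) = \fun_Y + O(z^{-2})$, so the class $c$ that produced the shift in basepoint on Givental's cone no longer appears. Taking $\theta = \theta_c$ in the Modified CCRC, the goal is to show that under this modification the exceptional specialisation becomes $Q_i = 1$ rather than $Q_i = \re^{c_i}$.

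The first step is to establish a $\theta$-twisted analog of the geometric material in \S\ref{sec:cone} and \S\ref{sec:conetoQC}: namely that $\cL_{Y,\theta}$ is the germ of a Lagrangian cone satisfying theorem~\ref{thm:cone}, and that the Frobenius-algebra construction of \S\ref{sec:conetoQC}(a) applied to $\cL_{Y,\theta}$ recovers the $\theta$-twisted small quantum cohomology of $Y$ at an appropriate basepoint. A short route is to use the lemma above, which (after choosing a lift $\rho_\theta$) identifies $\cL_{Y,\theta}$ with $\re^{\rho_\theta/z}\,\cLY$ via a $\CC(\!(z^{-1})\!)$-linear symplectic isomorphism; all structural properties of $\cLY$ then transport automatically. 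A $\theta$-twisted version of proposition~\ref{pro:FrobY} follows similarly: choosing the opposite subspace $\cHY^-$ at the basepoint $\cL_{Y,\theta}\cap({-z}+\cHY^-)$ produces the $\theta$-twisted (modified) quantum cohomology Frobenius manifold.

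With this in place, I would run the argument of \S\ref{sec:CCRC} essentially verbatim. Let $\sigma \in H^2(\cX;\CC)$ and $x = \cLX \cap \bigl({-z} + \sigma + \cHX^-\bigr)$; as $\sigma$ approaches the large-radius limit point for $\cX$, the Frobenius algebra $T_x/z T_x \cong \bigl(\HorbXNovZ,\QC{\sigma}\bigr)$ tends to the Chen--Ruan ring $(\HorbXNovZ,\CR)$. Let $y = \U_c(x)$. Proposition~\ref{pro:LRL} gives $x \to -z\,\re^{-\sigma/z}$, and conjecture~\ref{conj}(b) for $\U_c$ together with $\U_c(\fun_\cX) = \fun_Y + O(z^{-2})$ yields
\[
y \to -z\,\re^{\pi^\star\sigma/z}\,\U_c(\fun_\cX) = -z + \pi^\star\sigma + h_-
\]
for some $h_- \in \cHY^-$. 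The uniqueness characterisation of the $J$-function identifies this limit with $J_{Y,\theta_c}(\pi^\star\sigma,-z)$, with no extra $c$-shift. The $\theta_c$-twisted analog of proposition~\ref{pro:FrobY} then identifies $T_y/z T_y$ with the $\theta_c$-twisted big quantum cohomology at $\pi^\star \sigma$, and taking $\Real \sigma_i \to -\infty$, $1\le i\le s$, produces (after the analytic continuation in the exceptional variables $Q_{s+1},\ldots,Q_r$ required to reach the large-radius limit point of $Y$ from that of $\cX$) the $\theta_c$-twisted small quantum product at $Q_i = 0$ for $1\le i\le s$ and $Q_i = 1$ for $s<i\le r$. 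Since $\U_c$ induces an isomorphism of families of Frobenius algebras $T\cLX/zT\cLX \to T\cL_{Y,\theta_c}/zT\cL_{Y,\theta_c}$ by the intrinsic construction of \S\ref{sec:conetoQC}(a), passing to the limit gives the required isomorphism.

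The main obstacle is verifying that the twist factors absorb the $\re^{c_i}$ substitutions of theorem~\ref{thm:CCRC} in exactly the right way. Explicitly, the analytically-continued small $\theta$-twisted product reads
\[
\varphi_\alpha \smallQC_{\theta_c}\! \varphi_\beta = \sum_{d \in \ker\pi_\star} \theta_c(d)\,\correlator{\varphi_\alpha,\varphi_\beta,\varphi^\gamma}^Y_{0,3,d}\,\varphi_\gamma,
\]
and for the scheme to close up one must check that the holonomy $\theta_c(d)$ matches the product $\re^{\sum_{i>s} d_i c_i}$ appearing after the substitution \eqref{eq:CCRCsubst}. This is essentially the content of the lemma above combined with conjecture~\ref{conj2}, modulo normalisation of the exponential sequence, so I expect the check to be a short bookkeeping argument rather than a genuine difficulty; the only subsidiary point is to confirm that the analytic continuation of $\cL_{Y,\theta_c}$ used here is the one induced from the analytic continuation of $\cLY$ postulated in conjecture~\ref{conj}, which is automatic given the identification $\cL_{Y,\theta_c} = \re^{\rho_{\theta_c}/z}\,\cLY$.
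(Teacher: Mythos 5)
Your proposal is correct and follows essentially the same route as the paper: the paper's justification (at the end of \S\ref{sec:gerbes}) is precisely to rerun the argument of \S\ref{sec:CCRC} with $\U_c=\re^{c/z}\U$ in place of $\U$ and $\cL_{Y,\theta_c}$ in place of $\cLY$, noting that the absence of a basepoint shift (since $\U_c(\fun_\cX)=\fun_Y+O(z^{-2})$) replaces each $\re^{c_i}$ by $1$ while the replacement of $\cLY$ by $\cL_{Y,\theta_c}$ introduces the gerbe. Your write-up simply supplies the details (transport of the cone structure through $\re^{\rho_\theta/z}$, the twisted analogue of proposition~\ref{pro:FrobY}, and the matching $\theta_c(d)=\re^{\langle c,d\rangle}$ up to normalisation of the exponential sequence) that the paper leaves implicit, and correctly flags the one genuinely fiddly bookkeeping point.
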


\noindent Conjectures~\ref{conj} and~\ref{conj2} together imply the
Modified CCRC with $\theta = \theta_c$.  We can give a
similarly-improved version of Ruan's Crepant Resolution Conjecture,
which again follows from Conjectures~\ref{conj} and~\ref{conj2}:

\begin{conj*}(Modified CRC) Suppose that $\cX$ is semi-positive.  Then
  there is a flat gerbe $\theta$ over $Y$ and a choice of elements
  $f_1,\ldots,f_r \in \CC[\![U_1,\ldots,U_s]\!]$ such that $f_i = 0$
  when $U_1 = \cdots = U_s = 0$, such that the class $f = f_1
  \varphi_1 + \cdots + f_r \varphi_r$ is exceptional, and such that
  the Frobenius algebra given by the small quantum cohomology of $\cX$
  is isomorphic to the Frobenius algebra obtained from the
  $\theta$-twisted small quantum cohomology of $Y$ by analytic
  continuation in the exceptional quantum parameters
  $Q_{s+1},\ldots,Q_r$ (if necessary) followed by the
  change-of-variables
  \begin{equation*}
    Q_i =
    \begin{cases}
      \re^{f_i} U_i & 1 \leq i \leq s \\
      \re^{ f_i} & s < i \leq r.
    \end{cases}
  \end{equation*}
\end{conj*}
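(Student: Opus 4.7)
The plan is to rerun the argument of Theorem~\ref{thm:RuanCRC} verbatim, with $\U$ replaced by the modified transformation $\U_c = \re^{c/z}\U$ of \S\ref{sec:gerbes} and $\cLY$ replaced by $\cL_{Y,\theta_c}$; set $\theta = \theta_c$. The Corollary preceding the Modified CCRC already gives $\U_c(\cLX) = \cL_{Y,\theta}$ together with $\U_c(\fun_\cX) = \fun_Y + O(z^{-2})$. I would first check that $\U_c$ is still a degree-preserving $\CC(\!(z^{-1})\!)$-linear symplectic isomorphism satisfying conditions (b) and (d) of Conjecture~\ref{conj}, which is immediate since $\re^{c/z}$ is degree-preserving, commutes with $\cup \pi^\star \rho$ (because $c \in H^2(Y;\CC)$), and has matrix entries in $\CC(\!(z^{-1})\!)$. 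The key point is then to verify the analog of condition (c): because $\re^{c/z}\cHY^-$ differs from $\cHY^-$ only by terms involving non-negative powers of $z$, the transversality $\cHX^+ \pitchfork \U^{-1}(\cHY^-)$ translates to $\cHX^+ \pitchfork \U_c^{-1}(\cHY^-)$ by exactly the argument in Remark~\ref{rem:twoproducts}. In particular Lemma~\ref{lem:lowdegree} applies to $\U_c$.

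Next I would establish a $\theta$-twisted analog of Proposition~\ref{pro:FrobY}: the Frobenius manifold produced from $\cL_{Y,\theta}$ via the recipe of \S\ref{sec:conetoQC}(e), with opposite subspace $\cHY^-$ and basepoint $-z + s + h_-'$, is the $\theta$-twisted big quantum cohomology of $Y$ with product shifted by $s$. The construction of \S\ref{sec:conetoQC} is entirely natural in the submanifold-germ, so the proof of Proposition~\ref{pro:FrobY} goes through once one uses the $\theta$-twisted Divisor Equation, which follows directly from \eqref{eq:GWgerbe} and the ordinary Divisor Equation. Taking $x = \cLX \cap ({-z}+\cHX^-) = J_\cX(0,-z)$ as the basepoint on $\cLX$, the Frobenius algebra $T_x/zT_x$ is the small quantum cohomology of $\cX$, and $\U_c$ will identify it with $T_{\U_c(x)}/zT_{\U_c(x)}$, which by the statement just established is the appropriately shifted $\theta$-twisted small quantum cohomology of $Y$.

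Now I would compute $\U_c(x)$ line by line as in Proposition~\ref{pro:RuanCRC}: expanding $x$ as in \eqref{eq:Ruanx}, semi-positivity of $\cX$ forces every summand to have degree at most two; Lemma~\ref{lem:lowdegree}(a,b) sends all terms of degree at most one into $\cHY^-$, and part (c) sends each degree-two term into $\ker\pi_! \oplus \cHY^-$. Because $\U_c(\fun_\cX) = \fun_Y + O(z^{-2})$, the ``constant'' $c$-contribution that appeared in Proposition~\ref{pro:RuanCRC} is now absent, and so
\[
\U_c(x) = -z + f + h_-, \qquad h_- \in \cHY^-,
\]
with $f = f_1\varphi_1 + \cdots + f_r\varphi_r$ exceptional and given by the same formula \eqref{eq:quantumcorrections} (with $b_e$ now the exceptional component of $\U_c(\phi_e z^{-w_e/2+1})$). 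The sum on the right-hand side of \eqref{eq:quantumcorrections} runs only over $d \ne 0$, so each $f_i$ lies in $\CC[\![U_1,\ldots,U_s]\!]$ and vanishes at $U_1 = \cdots = U_s = 0$. The paragraph following Proposition~\ref{pro:RuanCRC}, applied to the $\theta$-twisted theory on $Y$, then converts the shift by $f$ into the change-of-variables $Q_i = \re^{f_i}U_i$ for $i \le s$ and $Q_i = \re^{f_i}$ for $i > s$, the latter after analytic continuation in $Q_{s+1},\ldots,Q_r$.

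The step I expect to be the main obstacle is the transversality check in the first paragraph: one must confirm that $\re^{c/z}\cHY^-$, which is not literally equal to $\cHY^-$, is nonetheless ``close enough'' to it that condition (c) of Conjecture~\ref{conj} for $\U$ implies the analogous condition for $\U_c$, and hence that $\cHY^-$ is genuinely opposite to $T_{\U_c(x)}\cL_{Y,\theta}$ near the large-radius limit of $\cX$. Everything else is a cosmetic modification of \S\ref{sec:Ruan}, but this transversality check is the only place where the gerbe twisting actively intervenes and must be handled with some care, via the grading argument of Remark~\ref{rem:twoproducts}.
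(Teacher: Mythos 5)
Your proposal matches the paper's (very brief, implicit) argument: the paper simply asserts that the Modified CRC follows from Conjectures~\ref{conj} and~\ref{conj2} by rerunning \S\ref{sec:Ruan} with $\U_c = \re^{c/z}\U$ and $\cL_{Y,\theta_c}$ in place of $\U$ and $\cLY$, exactly as you do. One remark on the step you flag as ``the main obstacle'': there is none. The operator $\re^{c/z} = 1 + c z^{-1} + \tfrac{1}{2}c^2 z^{-2} + \cdots$ involves only non-positive powers of $z$, and each correction term $c^k z^{-k}$ (with $k \geq 1$) maps $\cHY^-$ into $\cHY^-$; since $\re^{-c/z}$ does too, $\re^{c/z}$ restricts to an automorphism of $\cHY^-$, so $\re^{c/z}\cHY^- = \cHY^-$ on the nose, and $\U_c^{-1}(\cHY^-) = \U^{-1}(\re^{-c/z}\cHY^-) = \U^{-1}(\cHY^-)$. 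Condition (c) for $\U_c$ is thus literally condition (c) for $\U$, which is also why the Corollary in \S\ref{sec:gerbes} can assert that $\U_c$ satisfies (a--d) without further argument. The rest of your proposal --- Lemma~\ref{lem:lowdegree} applies to $\U_c$ because it only uses grading-preservation; the $\theta$-twisted analog of Proposition~\ref{pro:FrobY} holds by naturality plus the twisted Divisor Equation; the $b_e$ in \eqref{eq:quantumcorrections} are unchanged because $\re^{c/z}$ only adds $\cHY^-$-terms to the degree-two component --- is correct and is exactly the cosmetic modification of \S\ref{sec:Ruan} the paper has in mind.
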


\noindent The corrections $f_i$ here and in \eqref{eq:rightcofv} are
an example of what physicists call a `mirror map'.

\section*{Appendix: Proofs of Analyticity Results}

\begin{applemma1} 
  The descendant potential $\cF^0_\cX$, which is a formal power series
  in the variables $U_1,\ldots,U_s$ and
  $\tau_{a,\epsilon}$, $0 \leq \epsilon \leq N$, $0 \leq a < \infty$,
  in fact depends analytically on $\tau_{0,1},\ldots,\tau_{0,s}$ in
  the domain $\CC^s$.
\end{applemma1}

\begin{proof}
  Set
  \begin{align*}
    & \tau_{0,\rm two}  = \tau_{0,1} \phi_1 + \cdots + \tau_{0,s}
    \phi_s, \\
    & \big[\phi_{e_1} \psi^{a_1},\ldots,\phi_{e_k} \psi^{a_k} \big]^\cX_{0,d}
    = \sum_{n \geq 0} {1 \over n!}
    \correlator{\phi_{e_1} \psi^{a_1},\ldots,\phi_{e_k} \psi^{a_k},
      \tau_{0,\rm two},\ldots,\tau_{0,\rm two}}^{\cX}_{0,n+k,d}, \\
  & \ccorrelator{\phi_{e_1} \psi^{a_1},\ldots,\phi_{e_k}
    \psi^{a_k}}^\cX_0
   = \sum_{d \in \NE(\cX)}
  \big[\phi_{e_1} \psi^{a_1},\ldots,\phi_{e_k} \psi^{a_k} \big]^\cX_{0,d}
  \, U^d ,
  \end{align*}
  and call the quantity $\big[\phi_{e_1} \psi^{a_1},\ldots,\phi_{e_k}
  \psi^{a_k} \big]^\cX_{0,d}$ a \emph{$k$-point descendant}.  We need
  to show that each $k$-point descendant is an entire function of
  $\tau_{0,1},\ldots,\tau_{0,s}$; let us call this property
  \emph{entireness}.  The Topological Recursion Relations
  \cite{Tseng}*{\S2.5.5} express any $k$-point descendant
  $\big[\phi_{e_1} \psi^{a_1},\ldots,\phi_{e_k} \psi^{a_k}
  \big]^\cX_{0,d}$ with $k \geq 3$ and at least one non-zero $a_i$ as
  a linear combination of $l$-point descendants with $l<k$.  Thus we
  need to establish entireness for $k$-point descendants with $k=0$,
  $k=1$, $k=2$, or $k$ arbitrary but $a_1 = \cdots = a_k = 0$.  The
  cases $k=0$ and $k$ arbitrary but $a_1 = \cdots = a_k = 0$ follow
  from the entireness of the potential $F_\cX$ (see equation
  \ref{eq:bigQCXdivisor}).  The cases $k=1$ and $k = 2$ but $a_2 = 0$
  follow from proposition~\ref{pro:divisorX}.  The remaining case ---
  $k=2$ but $a_1, a_2 \ne 0$ --- follows from the WDVV-like identity
  \begin{equation}
    \label{eq:WDVVlike}
    \CCorrelator{{\phi_\alpha \over z - \psi},1,
      {\phi_\beta \over w - \psi}}^\cX_0 =
    \CCorrelator{{\phi_\alpha \over z - \psi},1,
      \phi_\epsilon}^\cX_0
    \CCorrelator{\phi^\epsilon,1,
      {\phi_\beta \over w - \psi}}^\cX_0
  \end{equation}
  and the String Equation
  \begin{equation}
    \label{eq:string}
    \begin{aligned}
      &\CCorrelator{{\phi_\alpha \over z - \psi},1,
        {\phi_\beta \over w - \psi}}^\cX_0 =
      {1 \over zw} \big(\phi_\alpha,\phi_\beta)_\cX +
      \Big({1 \over z} + {1 \over w}\Big)
      \CCorrelator{{\phi_\alpha \over z - \psi},
        {\phi_\beta \over w - \psi}}^\cX_0,\\
      &\CCorrelator{{\phi_\alpha \over z - \psi},1,
        \phi_\epsilon}^\cX_0
      =     {1 \over z} \big(\phi_\alpha,\phi_\epsilon)_\cX +
      {1 \over z} \CCorrelator{{\phi_\alpha \over z - \psi},
        \phi_\epsilon}^\cX_0.
    \end{aligned}
  \end{equation}
  Thus $\cF^0_\cX$ depends analytically on
  $\tau_{0,1},\ldots,\tau_{0,s}$ in the domain $\CC^s$.
\end{proof}

\begin{applemma2} Assume that convergence
  assumption \ref{convassum} holds.  Then the descendant potential
  $\cF^0_Y$, which is a formal power series in the variables
  $Q_1,\ldots,Q_r$ and $t_{a,\epsilon}$, $0 \leq
  \epsilon \leq N$, $0 \leq a < \infty$, in fact depends analytically
  on $t_{0,1},\ldots,t_{0,r}$ and
  $Q_{s+1},\ldots,Q_r$ in the domain
  \begin{equation}
    \label{eq:secondregionagain}
    \begin{aligned}
      &|t_{0,i}| < \infty & 1 \leq i \leq s \\
      &|Q_i \re^{t_{0,i}}|<R_i & s<i \leq r.
    \end{aligned}
  \end{equation}
\end{applemma2}

\begin{proof}
  This is very similar to the proof of the preceding lemma.  As
  before, set
  \begin{align*}
    & t_{0,\rm two}  = t_{0,1} \varphi_1 + \cdots + t_{0,r}
    \varphi_r, \\
    & \big[\varphi_{e_1} \psi^{a_1},\ldots,\varphi_{e_k} \psi^{a_k} \big]^Y_{0,d}
    = \sum_{n \geq 0} {1 \over n!}
    \correlator{\varphi_{e_1} \psi^{a_1},\ldots,\varphi_{e_k} \psi^{a_k},
      t_{0,\rm two},\ldots,t_{0,\rm two}}^{Y}_{0,n+k,d}, \\
  & \ccorrelator{\varphi_{e_1} \psi^{a_1},\ldots,\varphi_{e_k}
    \psi^{a_k}}^Y_0
   = \sum_{d \in \NE(Y)}
  \big[\varphi_{e_1} \psi^{a_1},\ldots,\varphi_{e_k} \psi^{a_k} \big]^Y_{0,d}
  \, Q^d .
  \end{align*}
  We need to show that, for each choice of $d_1,\ldots,d_s \in \QQ$
  with $d_i \geq 0$, the coefficient of $Q_1^{d_1}\cdots Q_s^{d_s}$ in
  $\ccorrelator{\varphi_{e_1} \psi^{a_1},\ldots,\varphi_{e_k}
    \psi^{a_k}}^Y_0$ defines an analytic function of
  $t_{0,1},\ldots,t_{0,r}$ and
  $Q_{s+1},\ldots,Q_r$ in the domain
  \eqref{eq:secondregionagain}.  Let us call this property
  \emph{analyticity} of $\ccorrelator{\varphi_{e_1}
    \psi^{a_1},\ldots,\varphi_{e_k} \psi^{a_k}}^Y_0$.

  The Topological Recursion Relations \cite{Cox--Katz}*{lemma~10.2.2}
  show that it suffices to establish analyticity of
  $\ccorrelator{\varphi_{e_1} \psi^{a_1},\ldots,\varphi_{e_k}
    \psi^{a_k}}^Y_0$ in the cases where $k=0$, $k=1$, $k=2$, or $k$
  arbitrary but $a_1 = \cdots = a_k = 0$.  The cases $k=0$ and $k$
  arbitrary but $a_1 = \cdots = a_k = 0$ follow from convergence
  assumption~\ref{convassum} (see the discussion above equation
  \ref{eq:firstregion}).  The cases $k=1$ and $k = 2$ with $a_1, a_2
  \ne 0$ follow from the case $k=2$ but $a_2 = 0$, in view of
  identities \eqref{eq:WDVVlike}, \eqref{eq:string}, and the String
  Equation
  \[
  \CCorrelator{{\varphi_\alpha \over z - \psi},1}^Y_0 =
  {1 \over z}
  \big(\varphi_\alpha,t_{0,\rm two})_Y + {1 \over z}
  \CCorrelator{{\varphi_\alpha \over z - \psi}}^Y_0.
  \]
  It remains to establish the analyticity of
  $\ccorrelator{{\phi_\alpha \over z - \psi},\phi_\beta}^Y_0$ for all
  $\alpha$ and $\beta$; this holds as these quantities are solutions
  to a system of differential equations (the `quantum differential
  equations' \cite{Cox--Katz}*{proposition~10.2.1}) with coefficients
  which are known, by convergence assumption~\ref{convassum}, to be
  analytic functions defined in the domain
  \eqref{eq:secondregionagain}.  The lemma is proved.
\end{proof}

\begin{bibdiv}
\begin{biblist}

\bib{Aganagic--Bouchard--Klemm}{article}{
    author={Aganagic, Mina},
    author={Bouchard, Vincent},
    author={Klemm, Albrecht},
    title={Topological Strings and (Almost) Modular Forms},
    eprint={hep-th/0607100},
  }

\bib{AGV:1}{article}{
    author={Abramovich, Dan},
    author={Graber, Tom},
    author={Vistoli, Angelo},
    title={Algebraic orbifold quantum products},
    conference={
       title={Orbifolds in mathematics and physics},
       address={Madison, WI},
       date={2001},
    },
    book={
       series={Contemp. Math.},
       volume={310},
       publisher={Amer. Math. Soc.},
       place={Providence, RI},
    },
    date={2002},
     pages={1--24},
     review={\MR{1950940 (2004c:14104)}},
 }

 \bib{AGV:2}{article}{
    author={Abramovich, Dan},
    author={Graber, Tom},
   author={Vistoli, Angelo},
   title={Gromov--Witten theory of Deligne--Mumford stacks},
   date={2006},
   eprint={arXiv:math.AG/0603151},
 }

\bib{Aspinwall--Greene--Morrison}{article}{
   author={Aspinwall, Paul S.},
   author={Greene, Brian R.},
   author={Morrison, David R.},
   title={Calabi-Yau moduli space, mirror manifolds and spacetime topology
   change in string theory},
   journal={Nuclear Phys. B},
   volume={416},
   date={1994},
   number={2},
   pages={414--480},
   issn={0550-3213},
   review={\MR{1274435 (95i:32027)}},
}

\bib{Barannikov}{article}{
   author={Barannikov, Serguei},
   title={Quantum periods. I. Semi-infinite variations of Hodge structures},
   journal={Internat. Math. Res. Notices},
   date={2001},
   number={23},
   pages={1243--1264},
   issn={1073-7928},
   review={\MR{1866443 (2002k:32017)}},
}

\bib{Beilinson--Bernstein--Deligne}{article}{
   author={Be{\u\i}linson, A. A.},
   author={Bernstein, J.},
   author={Deligne, P.},
   title={Faisceaux pervers},
   language={French},
   conference={
      title={Analysis and topology on singular spaces, I},
      address={Luminy},
      date={1981},
   },
   book={
      series={Ast\'erisque},
      volume={100},
      publisher={Soc. Math. France},
      place={Paris},
   },
   date={1982},
   pages={5--171},
   review={\MR{751966 (86g:32015)}},
}

\bib{Bryan--Graber}{article}{
  author = {Bryan, Jim},
  author = {Graber, Tom},
  title = {The Crepant Resolution Conjecture},
  eprint = {arXiv:math.AG/0610129},
}

\bib{Bryan--Graber--Pandharipande}{article}{
  author = {Bryan, Jim},
  author = {Graber, Tom},
  author = {Pandharipande, Rahul},
  title = {The orbifold quantum cohomology of $\CC^2/\ZZ_3$ and Hurwitz--Hodge
    integrals},
  eprint = {arXiv:math.AG/0510335}
}

\bib{Chen--Ruan:orbifold}{article}{
   author={Chen, Weimin},
   author={Ruan, Yongbin},
   title={A new cohomology theory of orbifold},
   journal={Comm. Math. Phys.},
   volume={248},
   date={2004},
   number={1},
   pages={1--31},
   issn={0010-3616},
   review={\MR{2104605 (2005j:57036)}},
}

\bib{Chen--Ruan:GW}{article}{
   author={Chen, Weimin},
   author={Ruan, Yongbin},
   title={Orbifold Gromov--Witten theory},
   conference={
      title={Orbifolds in mathematics and physics},
      address={Madison, WI},
      date={2001},
   },
   book={
      series={Contemp. Math.},
      volume={310},
      publisher={Amer. Math. Soc.},
      place={Providence, RI},
   },
   date={2002},
   pages={25--85},
   review={\MR{1950941 (2004k:53145)}},
}

\bib{Coates}{article}{
  title = {Givental's Lagrangian Cone and $S^1$-Equivariant Gromov--Witten
    Theory},
  author = {Coates, Tom},
  eprint = {arXiv:math.AG/0607808},
}

\bib{Coates:crepant2}{article}{
  title = {Wall-Crossings in Toric Gromov--Witten Theory II: Local
    Examples},
  author = {Coates, Tom},
  eprint = {arXiv:0804.2592v1}
}

\bib{CCIT:crepant1}{article}{
  title = {Wall-Crossings in Toric Gromov--Witten Theory I: Crepant
    Examples},
  author = {Coates, Tom},
  author = {Corti, Alessio},
  author = {Iritani, Hiroshi},
  author = {Tseng, Hsian-Hua},
  eprint = {arXiv:math.AG/0611550}
}

 \bib{CCLT}{article}{
   title={The Quantum Orbifold Cohomology of Weighted Projective Space},
   author={Coates, Tom},
   author={Corti, Alessio},
   author={Lee, Yuan-Pin},
   author={Tseng, Hsian-Hua},
   eprint={arXiv:math.AG/0608481},
 }

\bib{Coates--Givental:QRRLS}{article}{
  author={Coates, Tom},
  author={Givental, Alexander},
  title={Quantum Riemann-Roch, Lefschetz and Serre},
  journal={Ann. of Math. (2)},
  volume={165},
  date={2007},
  number={1},
  pages={15--53},
  issn={0003-486X},
  review={\MR{2276766}},
}

\bib{Cox--Katz}{book}{
   author={Cox, David A.},
   author={Katz, Sheldon},
   title={Mirror symmetry and algebraic geometry},
   series={Mathematical Surveys and Monographs},
   volume={68},
   publisher={American Mathematical Society},
   place={Providence, RI},
   date={1999},
   pages={xxii+469},
   isbn={0-8218-1059-6},
   review={\MR{1677117 (2000d:14048)}},
}

\bib{Dubrovin}{article}{
   author={Dubrovin, Boris},
   title={Geometry of $2$D topological field theories},
   conference={
      title={Integrable systems and quantum groups},
      address={Montecatini Terme},
      date={1993},
   },
   book={
      series={Lecture Notes in Math.},
      volume={1620},
      publisher={Springer},
      place={Berlin},
   },
   date={1996},
   pages={120--348},
   review={\MR{1397274 (97d:58038)}},
}

\bib{Faber--Shadrin--Zwonkine}{article}{
    title = {Tautological relations and the r-spin Witten conjecture},
    author = {Faber, Carel},
    author = {Shadrin, Sergey},
    author = {Zvonkine, Dimitri},
    eprint = {arXiv:math/0612510}
  }

\bib{Fulton--Pandharipande}{article}{
   author={Fulton, W.},
   author={Pandharipande, R.},
   title={Notes on stable maps and quantum cohomology},
   conference={
      title={Algebraic geometry---Santa Cruz 1995},
   },
   book={
      series={Proc. Sympos. Pure Math.},
      volume={62},
      publisher={Amer. Math. Soc.},
      place={Providence, RI},
   },
   date={1997},
   pages={45--96},
   review={\MR{1492534 (98m:14025)}},
}

\bib{Givental:homological}{article}{
   author={Givental, Alexander B.},
   title={Homological geometry. I. Projective hypersurfaces},
   journal={Selecta Math. (N.S.)},
   volume={1},
   date={1995},
   number={2},
   pages={325--345},
   issn={1022-1824},
   review={\MR{1354600 (97c:14052)}},
}

\bib{Givental:quantization}{article}{
  author={Givental, Alexander B.},
  title={Gromov-Witten invariants and quantization of quadratic
    Hamiltonians},
  language={English, with English and Russian summaries},
  journal={Mosc. Math. J.},
  volume={1},
  date={2001},
  number={4},
  pages={551--568, 645},
  issn={1609-3321},
  review={\MR{1901075 (2003j:53138)}},
}

 \bib{Givental:symplectic}{article}{
   author={Givental, Alexander B.},
   title={Symplectic geometry of Frobenius structures},
   conference={
     title={Frobenius manifolds},
   },
   book={
     series={Aspects Math., E36},
     publisher={Vieweg},
     place={Wiesbaden},
   },
   date={2004},
   pages={91--112},
   review={\MR{2115767 (2005m:53172)}},
 }

\bib{Hertling}{book}{
   author={Hertling, Claus},
   title={Frobenius manifolds and moduli spaces for singularities},
   series={Cambridge Tracts in Mathematics},
   volume={151},
   publisher={Cambridge University Press},
   place={Cambridge},
   date={2002},
   pages={x+270},
   isbn={0-521-81296-8},
   review={\MR{1924259 (2004a:32043)}},
}

\bib{Hertling--Manin}{article}{
   author={Hertling, C.},
   author={Manin, Yu.},
   title={Weak Frobenius manifolds},
   journal={Internat. Math. Res. Notices},
   date={1999},
   number={6},
   pages={277--286},
   issn={1073-7928},
   review={\MR{1680372 (2000j:53117)}},
}

\bib{Keel--Mori}{article}{
   author={Keel, Se{\'a}n},
   author={Mori, Shigefumi},
   title={Quotients by groupoids},
   journal={Ann. of Math. (2)},
   volume={145},
   date={1997},
   number={1},
   pages={193--213},
   issn={0003-486X},
   review={\MR{1432041 (97m:14014)}},
}		

\bib{Lee:1}{article}{
    title = {Invariance of tautological equations I: conjectures and applications},
    author = {Lee, Y.-P.},
    eprint = {arXiv:math/0604318},
}

\bib{Lee:2}{article}{
    title = {Invariance of tautological equations II: Gromov--Witten theory},
    author = {Lee, Y.-P.},
    eprint = {arXiv:math/0605708},
}

\bib{Manin}{book}{
   author={Manin, Yuri I.},
   title={Frobenius manifolds, quantum cohomology, and moduli spaces},
   series={American Mathematical Society Colloquium Publications},
   volume={47},
   publisher={American Mathematical Society},
   place={Providence, RI},
   date={1999},
   pages={xiv+303},
   isbn={0-8218-1917-8},
   review={\MR{1702284 (2001g:53156)}},
}

\bib{Milanov}{article}{
  title = {Gromov--Witten Theory of $\mathbb{CP}^1$ and Integrable Hierarchies},
  author = {Milanov, Todor E.},
  eprint = {arXiv:math-ph/0605001},
}

\bib{Pan--Ruan--Yin}{article}{
  title = {Gerbes and twisted orbifold quantum cohomology},
  author = {Pan, Jianzhong},
  author = {Ruan, Yongbin},
  author = {Yin, Xiaoqin},
  eprint = {arXiv:math.AG/0504369},
}

\bib {Perroni}{article}{
  title = {Orbifold Cohomology of ADE-singularities},
  author = {Perroni, Fabio},
  eprint = {arXiv:math.AG/0510528},
}

\bib{Ruan:firstconjecture}{article}{
   author={Ruan, Yongbin},
   title={The cohomology ring of crepant resolutions of orbifolds},
   conference={
      title={Gromov-Witten theory of spin curves and orbifolds},
   },
   book={
      series={Contemp. Math.},
      volume={403},
      publisher={Amer. Math. Soc.},
      place={Providence, RI},
   },
   date={2006},
   pages={117--126},
   review={\MR{2234886 (2007e:14093)}},
}

\bib{Ruan:conjecture}{article}{
  author = {Ruan, Yongbin},
  status = {unpublished},
}

 \bib{Tseng}{article}{
     author = {Tseng, Hsian-Hua},
     title = {Orbifold Quantum Riemann--Roch, Lefschetz and Serre},
     eprint = {arXiv:math.AG/0506111},
 }

\bib{Witten}{article}{
  eprint = {arXiv:hep-th/9306122},
   title = {Quantum Background Independence In String Theory},
   author = {Witten, Edward},
}

\end{biblist}
\end{bibdiv}

\end{document}